\newtheorem{thm}{Theorem}[section]
\newtheorem{lma}[thm]{Lemma}
\newtheorem{prop}[thm]{Proposition}
\theoremstyle{definition}
\newtheorem{rem}[thm]{Remark}
\newtheorem{ques}[thm]{Question}
\newtheorem*{theorem*}{Theorem}
\newcommand{\R}{\mathbb{R}}
\newcommand{\N}{\mathbb{N}}
\providecommand{\norm}[1]{\lVert#1\rVert}
\newcommand{\I}{\mathcal{I}}
\newcommand{\J}{\mathcal{J}}
\renewcommand{\i}{\mathtt{i}}
\renewcommand{\j}{\mathtt{j}}
\renewcommand{\k}{\mathtt{k}}
\renewcommand{\l}{\mathtt{l}  }
\newcommand{\pd}{\dim_\textup{P}}
\newcommand{\hd}{\dim_\textup{H}}
\newcommand{\bd}{\dim_\textup{B}}
\renewcommand{\o}{\overline{\alpha}}
\renewcommand{\u}{\underline{\alpha}}
\renewcommand{\epsilon}{\varepsilon}
\numberwithin{equation}{section}
\title{Parabolic carpets}
\author{Jonathan M. Fraser} \address{Mathematical Institute, University of St Andrews, Scotland, KY16 9SS}
\email{jmf32@st-andrews.ac.uk}
\author{Natalia Jurga} \address{Mathematical Institute, University of St Andrews, Scotland, KY16 9SS}
\email{naj1@st-andrews.ac.uk}
\thanks{The  authors were both     supported by an \emph{EPSRC Standard Grant} (EP/R015104/1). J. M. Fraser was also supported by  a  \emph{Leverhulme Trust Research Project Grant} (RPG-2019-034) and an \emph{RSE Sabbatical Research Grant} (70249).}
\begin{document}

\maketitle

\begin{abstract}
We introduce and study a family of  non-conformal and non-uniformly contracting iterated function systems.  We refer to the attractors of such systems as \emph{parabolic carpets}.  Roughly speaking they may be thought of as   nonlinear analogues of  self-affine carpets which are allowed to have parabolic fixed points.  We compute the $L^q$-spectrum of a class of weak Gibbs measures supported on parabolic carpets as well as the box dimensions of the carpet itself.  
\end{abstract} 

 \tableofcontents

\section{Introduction}

\subsection{Parabolic carpets and parabolic IFSs}

Fractals generated by hyperbolic conformal dynamical systems are among the most well-understood and well-studied objects in fractal geometry.  Examples include self-similar sets and self-conformal sets.  There are two key generalisations where the resulting dynamical systems and associated fractals become much more complicated and give rise to many new features.  The first approach  is to drop the conformality assumption.  The simplest examples of such systems are self-affine sets, where the defining maps can distort space by different amounts in different directions. Self-affine carpets, e.g. those studied by Bedford-McMullen \cite{bedford, mcmullen}, are the simplest example of self-affine sets, but nevertheless give rise to a host of interesting properties.  The second approach is to drop the hyperbolicity assumption.  This gives rise to \emph{parabolicity}, which appears in many contexts, including: parabolic Cantor sets, parabolic Julia sets and limit sets of Kleinian groups.  Perhaps the simplest examples are invariant sets for parabolic interval maps  such as the Manneville-Pomeau system but, again, many technical difficulties and new phenomena arise in this setting.  Our idea in this paper is to blend these approaches, studying for the first time, parabolic non-conformal systems.

We say that a map $h$ on $[0,1]$ is a \emph{contraction} if, for all distinct $x,y \in [0,1]$, $|h(x)-h(y) | < |x-y|$.  Throughout we assume $h$ is differentiable and   use one-sided derivatives at the end points without explicitly drawing attention to it.  The important feature of our definition of contraction is that parabolic points are allowed.  We say $p \in [0,1]$ is a \emph{parabolic point} for differentiable  $h$ if $h(p)=p$ and $|h'(p)| = 1$.  Clearly if $h$ is a differentiable  contraction then it has at most one parabolic point.

Consider a set of (at least two) contractions $\{S_i\}_{i \in \I}$ acting on $X$ (which will either be $[0,1]^2$ or $[0,1]$). We refer to such a collection as a parabolic IFS.   Let $\Sigma = \mathcal{I}^\infty$ be the space of all infinite words over $\mathcal{I}$ and, for $\i = i_1  i_2 \dots \in \Sigma$ and $n \in \mathbb{N}$, define $\i|n = i_1 \dots i_n$   and
\[
S_{\i|n} = S_{i_1} \circ \cdots \circ S_{i_n}.
\]
Then define a map $\Pi: \Sigma \to X$ by 
\[
\{\Pi(\i)\} = \bigcap_{n=1}^\infty S_{\i|n}([0,1]^2).
\]
The fact that the maps are contractions ensures that $\Pi$ is well-defined since, for all $\i \in \Sigma$, $\Pi(\i) \in X$ is a  single point.  The \emph{attractor} of a parabolic IFS is then defined by
\[
F = \Pi(\Sigma).
\]
We will be interested in measures supported on attractors of parabolic IFSs.  Consider a Borel probability measure $\mathbb{P}$ supported on $\Sigma$ where $\Sigma$ is equipped with the product topology.  Let $\mathcal{I}^*$ denote the set of all finite words with digits in $\I$. For $\i \in \mathcal{I}^*$ we write $[\i]$ for the cylinder consisting of all infinite words starting with $\i$.  Our basic assumption will be that $\mathbb{P}$   satisfies the \emph{weak quasi-Bernoulli property}: there exists a sequence $(c_n)_{n \in \N}$ of positive real numbers satisfying $\lim_{n \to \infty} c_n^{\frac{1}{n}}=1$ such that for all $\i_{1} \in \I^{n_1}, \ldots, \i_{k} \in \I^{n_k}$,
\begin{equation} \label{wqbp}
c_{n_1}^{-1} \cdots c_{n_k}^{-1}\leq \frac{\mathbb{P}([\i_1 \ldots \i_k])}{\mathbb{P}([\i_1])\cdots \mathbb{P}([ \i_k])} \leq c_{n_1} \cdots c_{n_k}.
\end{equation}
This includes the class of weak Gibbs measures, see for instance \cite{jordanrams}. If $c_n$ can be taken to be a uniform constant over all $n \in \N$, then we say that $\mathbb{P}$ has the \emph{quasi-Bernoulli property}. Apart from assuming that $\mathbb{P}$ satisfies \eqref{wqbp} we will also need to assume that when $\mathbb{P}$ is restricted to a certain `induced subsystem', $\mathbb{P}$ has the quasi-Bernoulli property. We delay the discussion of this until  \S \ref{aqbp}.
Note that our measures are not necessarily invariant under the left-shift.  Now, 
\[
\mu = \mathbb{P} \circ \Pi^{-1}
\]
is a Borel probability measure supported on $F$.  We refer to such measures as \emph{weakly quasi-Bernoulli measures}.

Consider an IFS $\{S_i\}_{i \in \I}$ acting on $[0,1]^2$ where the maps are given by $S_i(x,y):=(f_i(x),g_i(y))$ and:
\begin{enumerate}
\item[A1]  there exists $\alpha_f, \alpha_g>0$ such that for all $i \in \I$,  $f_i$ are $C^{1+\alpha_f}$ contractions on $[0,1]$ and $g_i$ are $C^{1+\alpha_g}$ contractions on $[0,1]$.  \label{ifs}
\item[A2]  for all $i \in \I$,  $f_i, g_i$ have non-vanishing derivative on $[0,1]$.  \label{bilip}
\item[A3] for all $i,j \in \I$, if $f_i((0,1)) \cap f_j((0,1)) \neq \emptyset$, then $f_i \equiv f_j$  and if $g_i((0,1)) \cap g_j((0,1)) \neq \emptyset$, then $g_i \equiv g_j$. In particular if $f_i((0,1)) \cap f_j((0,1)) \neq \emptyset$ and $g_i((0,1)) \cap g_j((0,1)) \neq \emptyset$ then $i=j$.  (This forces the open set condition and also imposes a grid like structure on the IFS, similar to Bedford-McMullen carpets for example.)  \label{grid}
\end{enumerate}
We call  attractors of parabolic IFSs satisfying A1, A2 and A3  \emph{parabolic carpets}.

\begin{figure}[h]
							\includegraphics[width=\textwidth]{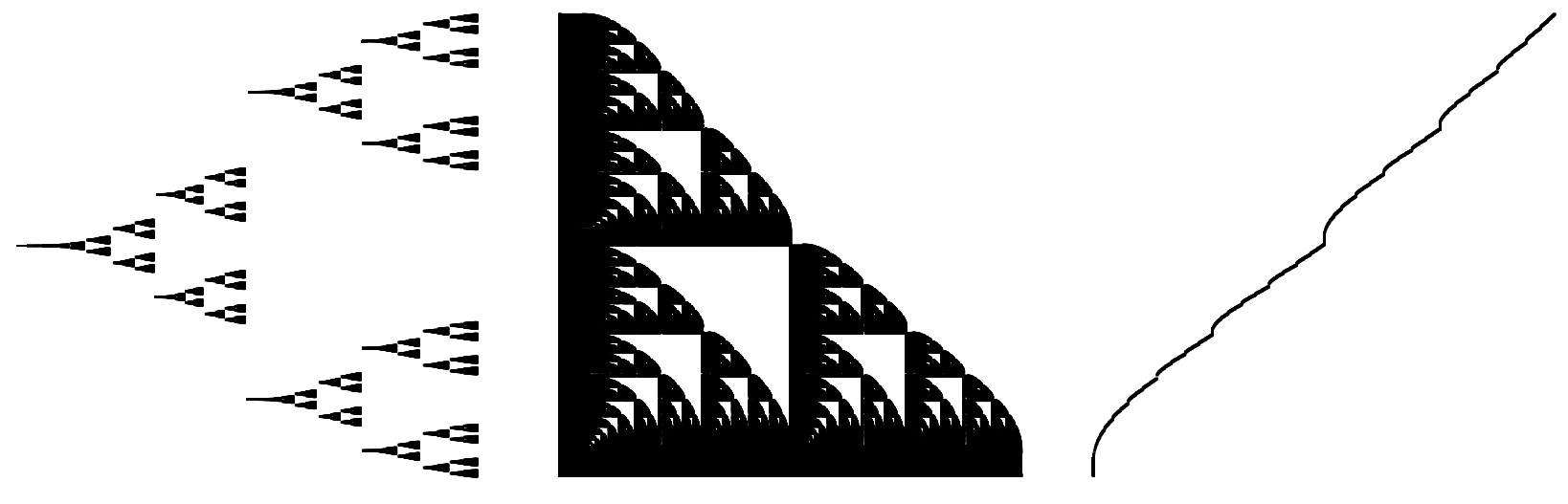} \\

			\caption{Three parabolic carpets.  Left: the IFS is constructed from the product of a Manneville-Pomeau system $x \mapsto f_\alpha(x)$ with parameter $\alpha=0.9$ (see \eqref{MP}) and the tripling map $y \mapsto 3y \textup{ mod 1}$. Centre: a parabolic variant on the right-angled Sierpi\'nski triangle.  The maps are all $C^2$ and the bottom left map is $x \mapsto   (\sqrt{1 + 8x}-1)/4$ in both coordinates. In particular, $(0,0)$ is a double parabolic fixed point.  Right: this set is invariant under the product of two Manneville-Pomeau systems with different parameters.}
			\end{figure}\label{examplesfig}

The projections of parabolic carpets onto their coordinate axes will be important in our analysis.  This is a common feature of self-affine carpets and is a consequence of cylinder sets becoming increasingly long and thin.  For this reason we are led to consider parabolic Cantor sets.  The definition is similar but we keep them separate for clarity. Consider an IFS $\{h_i\}_{i \in \I}$ acting on $[0,1]$ where:
\begin{enumerate}
\item[A1']  there exists $\alpha_h>0$ such that for all $i \in \I$,  $h_i$ are $C^{1+\alpha_h}$ contractions on $[0,1]$.  \label{ifs2}
\item[A2']  for all $i \in \I$,  $h_i$ has  non-vanishing derivative on  $[0,1]$.  \label{bilip2}
\item[A3'] for all $i,j \in \I$, if $h_i((0,1)) \cap h_j((0,1)) \neq \emptyset$, then $i=j$. \label{grid2}
\end{enumerate}
We call  attractors of parabolic IFSs satisfying  A1', A2' and A3' \emph{parabolic Cantor sets}.  In particular, if the attractor is not the whole interval then it is a topological Cantor set. Parabolic Cantor sets have been studied before in various contexts, sometimes with slightly different assumptions, see  for example \cite{gelfertrams, jordanrams, parifs, urbanski}.

\subsection{Dimensions of sets and measures}

We write $\hd$ and $\bd$ to denote the Hausdorff and box dimension respectively.  For a general compactly supported Borel probability measure  $\mu$   on  $  \mathbb{R}^d$, the $L^q$-spectrum of $\mu$ is a  function parametrised by $q \geq 0$ which measures coarse fluctuations in the measure.  This is captured by studying the growth rate of the moments
\[
D_\delta^q(\mu) = \sum_{Q \in \mathcal{Q}_\delta} \mu(Q)^q
\]
where $\mathcal{Q}_\delta$ is the collection of closed cubes in a $\delta$-mesh imposed on $\mathbb{R}^d$ oriented with the coordinate axes and we adopt the convention that $0^0=0$.  Note that for $q=0$ this is the number of   $\delta$-cubes intersecting the support of the measure.

Formally, for  $q\geq 0$ the upper and lower $L^q$-spectrum of $\mu$ are defined to be 
\begin{equation*}
\overline{\tau}_{\mu}(q)= \overline{\lim}_{\delta\rightarrow 0}\frac{\log D_\delta^q(\mu) }{-\log\delta}
\end{equation*}
and 
\begin{equation*}
\underline{\tau}_{\mu}(q)= \underline{\lim}_{\delta\rightarrow 0}\frac{\log D_\delta^q(\mu) }{-\log\delta}
\end{equation*}
respectively. If these two values coincide we define the $L^q$-spectrum  of $\mu$, denoted by $\tau_{\mu}(q)$, to be the common value.  The $L^q$-spectrum encodes lots of information about the measure and its support.  For example, $\overline{\tau}_{\mu}(0)$ and $\underline{\tau}_{\mu}(0)$ coincide with the upper and lower box dimensions of the support of $\mu$.  When it exists $-\tau_\mu'(1)$ gives the Hausdorff dimension of $\mu$ and the Legendre transform of $\tau_{\mu}$ provides an upper bound for the (increasing part of the) multifractal spectrum of $\mu$.  If  the multifractal formalism holds, then the Legendre transform of $\tau_{\mu}$ and the increasing part of the  multifractal spectrum of $\mu$ coincide.

\subsection{Notation}

We  write $A \lesssim B$ to mean there exists a constant $c>0$ such that $A \leq cB$.  We define $\gtrsim$ in an analogous way and $A \approx B$ will mean $A \lesssim B$ and $A \gtrsim B$.   If we want to emphasise that the constant $c$ depends on a parameter $w$, then we will write, for example,  $A \lesssim_w B$.  Sometimes it will be important to keep track of constants and so we will not exclusively use this notation.  When we do use it, it will be to make arguments cleaner by suppressing constants which do not play an important role. 

We write $|X|$ to denote the diameter of a non-empty set $X \subseteq \mathbb{R}^d$. This is not to be confused with the absolute value $|x|$ of a real number $x$.

\subsection{A first example}

 While we do not consider negative $q$ in this paper (as is often the case), in this short section we show that the behaviour of the $L^q$-spectrum as $q \to -\infty$ for quasi-Bernoulli  measures on parabolic Cantor sets is very different from the hyperbolic case.  In some sense this motivates further study of the $L^q$-spectrum in the parabolic setting.  Given  a  Borel probability measure $\nu$ supported on compact set $X$ let
\begin{align*}
D_\nu(-\infty)  = \liminf_{r \to 0} \sup_{x \in X} \frac{\log \nu(B(x,r))}{\log r}.
\end{align*}
It can be shown that
\[
D_\nu(-\infty)  = \lim_{q \to -\infty} \frac{\underline{\tau}_\nu(q)}{-q}
\]
if $\underline{\tau}_\nu$ is appropriately extended to allow negative values of $q$, see \cite[Proposition 4.2]{boxdim}. The quantity $D_\nu(-\infty)$ is also known as the (lower) box dimension of $\nu$, recently introduced in \cite{boxdim}.    For quasi-Bernoulli measures $\nu$ supported on hyperbolic Cantor sets, it is straightforward to show that $D_\nu(-\infty)  < \infty$.  On the other hand, in the parabolic case $D_\nu(-\infty)$ is always infinite.

\begin{prop}
Suppose $\mathbb{P}$ is a measure on $\Sigma$ such that there exists $\theta \in (0,1)$ such that, for all sufficiently large $n$ and all $\i \in \mathcal{I}^n$, $\mathbb{P}([\i]) \leq \theta^n$.  In particular, quasi-Bernoulli measures satisfy this property.  If $\mu = \mathbb{P} \circ \Pi^{-1}$ is the associated measure on a parabolic Cantor set which  has a parabolic fixed point, then $D_\nu(-\infty) = \infty$.
\end{prop}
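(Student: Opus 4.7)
The plan is to exploit the tension at the parabolic fixed point: the hypothesis gives exponential decay $\mathbb{P}([i_0^n]) \leq \theta^n$ for the cylinder above $p$, but the geometric interval $S_{i_0^n}([0,1])$ can only shrink polynomially in $n$ because $h_{i_0}$ contracts extremely slowly near $p$. Consequently $\mu(B(p,r))$ will be bounded above by $\mathbb{P}([i_0^n])$ for $n$ polynomial in $1/r$, forcing $\mu(B(p,r))$ to decay faster than any power of $r$ and thus pushing $\log \mu(B(p,r))/\log r$ to infinity as $r \to 0$.

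The key technical step is a polynomial lower bound $|S_{i_0^n}([0,1])| \gtrsim n^{-1/\alpha_h}$ on cylinder diameters, where $i_0 \in \I$ is chosen so that $h_{i_0}(p) = p$ and $h_{i_0}'(p) = 1$. After translation assume $p = 0$ and work on one side of $p$ (the other side, if any, is symmetric). The $C^{1+\alpha_h}$ assumption (A1') gives $|h_{i_0}'(x) - 1| \lesssim x^{\alpha_h}$ near $0$, which integrates to $h_{i_0}(x) \geq x - C x^{1+\alpha_h}$ in some neighbourhood of $0$. Setting $x_n = h_{i_0}^n(x_0)$ for a suitable $x_0 > 0$, the substitution $u_n = x_n^{-\alpha_h}$ converts this into a one-step estimate $u_{n+1} - u_n \leq C'$, yielding $x_n \gtrsim n^{-1/\alpha_h}$; starting from $x_0 = 1$ the orbit enters the region where this applies after finitely many iterates, which is enough.

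To conclude, for $r > 0$ small choose $n = n(r) \asymp r^{-\alpha_h}$ so that $B(p,r) \cap F \subseteq S_{i_0^n}([0,1])$. The grid condition (A3'), together with the observation that $\mathbb{P}$ is non-atomic (immediate from $\mathbb{P}(\{\j\}) \leq \mathbb{P}([\j|n]) \leq \theta^n$ for every $\j \in \Sigma$), gives $\mathbb{P}(\Pi^{-1}(S_{i_0^n}([0,1]))) = \mathbb{P}([i_0^n])$ since the discrepancy is supported on countably many boundary points. Hence
\[
\mu(B(p,r)) \leq \mathbb{P}([i_0^n]) \leq \theta^n \leq \exp(-c\, r^{-\alpha_h}),
\]
and so $\log \mu(B(p,r))/\log r \geq c\, r^{-\alpha_h}/|\log r| \to \infty$ as $r \to 0$. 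Taking the supremum in $x$ over $F$ in the definition of $D_\mu(-\infty)$ then finishes the argument.

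The main obstacle is the polynomial lower bound on iterates: one must be content that the $C^{1+\alpha_h}$ hypothesis only controls the displacement $x - h_{i_0}(x)$ from above and not from below (a two-sided bound would require more regularity of $h_{i_0}$ at $p$). Fortunately only the one-sided bound $x_n \gtrsim n^{-1/\alpha_h}$ is required here, since it is the cylinder interval being \emph{at least} $c n^{-1/\alpha_h}$ in length—i.e.\ containing the ball $B(p, c n^{-1/\alpha_h})$—that drives the argument.
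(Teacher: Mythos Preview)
Your argument is correct, but it does considerably more work than the paper's. The paper simply observes that $|h_{\i_n}(F)|\to 0$ \emph{sub-exponentially} whenever $h_{i_0}$ has a parabolic fixed point (an immediate consequence of $|h_{i_0}(x_n)/x_n|\to |h_{i_0}'(p)|=1$ and Ces\`aro averaging), and then notes that
\[
\frac{\log \mu(B(x,r_n))}{\log r_n}\ \gtrsim\ \frac{n\log\theta}{\log|h_{\i_n}(F)|}\ \to\ \infty
\]
for $r_n\approx|h_{\i_n}(F)|$. No explicit rate for the cylinder diameters is needed, and in particular the $C^{1+\alpha_h}$ hypothesis is never invoked---only $C^1$ and the existence of the parabolic point. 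Your route, by contrast, extracts a quantitative lower bound $|h_{i_0}^n([0,1])|\gtrsim n^{-1/\alpha_h}$ from the H\"older regularity of $h_{i_0}'$, which gives the stronger conclusion $\log\mu(B(p,r))/\log r\gtrsim r^{-\alpha_h}/|\log r|$ but at the cost of a longer calculation and a dependence on A1$'$ that the result does not actually require.

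Two small remarks. First, you write $S_{i_0^n}$ for the one-dimensional maps; in the paper's notation this should be $h_{\i_n}$ with $\i_n=i_0\cdots i_0$. Second, you do not address the ``In particular, quasi-Bernoulli measures satisfy this property'' clause; the paper gives a short separate argument for this by blocking words into pieces of length $N$ chosen so that $\max_{\j\in\I^N}\mathbb{P}([\j])\le 1/(3c)$ and applying the quasi-Bernoulli inequality.
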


\begin{proof}
Suppose $h_i$   has a parabolic fixed point and let $\i_n = ii \cdots i \in \mathcal{I}^n$.   For large $n$ consider a ball $F \cap B(x,r) \subseteq h_{\i_n}(F)$ with  $r \approx | h_{\i_n}(F) |$.  Then
\[
\frac{\log \mu (B(x,r) )}{\log r} \gtrsim \frac{n \log \theta}{\log  | h_{\i_n}(F) |} \to \infty
\]
as $n \to \infty$ since $| h_{\i_n}(F) |$ must go to zero sub-exponentially since $h_i$ has a parabolic fixed point. This proves the claim. To see why quasi-Bernoulli measures satisfy this decay condition, choose an integer $N$ such that
\[
\max_{\j \in \mathcal{I}^N} \mathbb{P}([\j]) \leq 1/(3c). 
\]
where $c$ is the constant from the quasi-Bernoulli property. This can be done since $\mathbb{P}$ clearly has no atoms.  Then for arbitrary $\i \in \mathcal{I}^n$ we can write $\i = \j' \i'$ for some $\j'$ with length equal to $kN$ for some integer $k \geq 0$ and some $\i'$ with length less than $N$.  Then by the quasi-Bernoulli property
\[
\mathbb{P}([\i]) \leq \mathbb{P}([\j']) \leq c^k\left(\max_{\j \in \mathcal{I}^N} \mathbb{P}([\j]) \right)^k \leq 3^{-k} \leq 2^{-n/N}
\]
for  $n$ sufficiently large, proving the desired condition with $\theta = 2^{-1/N}$.
\end{proof}

Note that in the above proof we only used one side of the quasi-Bernoulli property and so the result also holds for submultiplicative measures, such as K\"aenm\"aki measures.

\subsection{Results and organisation}

We end this section by giving a heuristic version of our main results and describing the organisation of the paper. Roughly speaking, we will prove the following.

\begin{theorem*} Suppose $\mu$ is a weakly quasi-Bernoulli measure on a parabolic Cantor set or parabolic carpet. Suppose there is a large uniformly hyperbolic subsystem which has good distortion properties and such that the measure restricted to this system is quasi-Bernoulli. Then $\tau_\mu(q)$ is given by the minimal root of a natural pressure function.
\end{theorem*}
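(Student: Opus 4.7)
The strategy is to \emph{induce} on the given uniformly hyperbolic subsystem. Write $\J\subset\I^{*}$ for the alphabet of that induced IFS, so that typical words $\i\in\I^{*}$ decompose as concatenations $\j_{1}\cdots\j_{k}$ with $\j_{\ell}\in\J$, the long induced letters absorbing the parabolic excursions. On $\J^{*}$, $\mathbb{P}$ is quasi-Bernoulli by assumption, so $\mathbb{P}([\j_{1}\cdots\j_{k}])\approx\prod_{\ell}\mathbb{P}([\j_{\ell}])$ up to a factor $C^{k}$; and the hyperbolic maps $S_{\j}$ enjoy bounded distortion, so $|S_{\j_{1}\cdots\j_{k}}(X)|\approx\prod_{\ell}|S_{\j_{\ell}}(X)|$. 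These two multiplicativities reduce the study of $\mu$ to a thermodynamic problem on the countable alphabet $\J$, where the $L^{q}$-spectrum will be identified with the minimal zero of a pressure function. The weak quasi-Bernoulli property on the full $\I^{*}$ only introduces a subexponential factor $c_{n}$, which is $\delta^{o(1)}$ and absorbed into the scaling.

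I would first treat the parabolic Cantor case. By A3', the cylinders $\{h_{\i}(F)\}_{\i\in\I^{n}}$ partition $F$; calling $\i$ admissible for scale $\delta$ when $|h_{\i}(X)|\leq\delta<|h_{\i^{-}}(X)|$ (with $\i^{-}$ denoting $\i$ with its last letter removed), the open set condition gives $D_{\delta}^{q}(\mu)\approx\sum_{\i\,\text{adm.}}\mu(h_{\i}(F))^{q}$. Applying the weak quasi-Bernoulli property and decomposing each admissible $\i$ along the induced alphabet, this becomes, up to $\delta^{o(1)}$, a sum of the form
\[
\sum_{k}\ \sum_{\substack{\j_{1},\ldots,\j_{k}\in\J \\ \prod_{\ell}|S_{\j_{\ell}}(X)|\approx\delta}}\ \prod_{\ell=1}^{k}\mathbb{P}([\j_{\ell}])^{q}.
\]
A standard Bowen-type counting argument then identifies $\tau_{\mu}(q)$ with the minimal $s\geq 0$ for which the pressure
\[
P(q,s)\ :=\ \lim_{n\to\infty}\frac{1}{n}\log\sum_{\j\in\J^{n}}\mathbb{P}([\j])^{q}\,|S_{\j}(X)|^{s}
\]
is non-positive. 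The minimal-root qualification is forced because $\J$ is countably infinite, so $P(q,\cdot)$ may equal $+\infty$ below a threshold and only its first zero carries geometric meaning.

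For parabolic carpets I would replace $\delta$-cubes by \emph{approximate squares} in the Bedford--McMullen spirit: for $\i\in\I^{*}$ pick levels $n_{x}\leq n_{y}$ so that the $x$-side of $S_{\i|n_{x}}([0,1]^{2})$ has length $\approx\delta$ while further subdividing in $y$ down to level $n_{y}$ brings the $y$-side also to $\approx\delta$ (and symmetrically when $n_{y}<n_{x}$). Projecting words to the one-dimensional IFSs $\{f_{i}\}$, $\{g_{i}\}$ yields two parabolic Cantor factors to which the preceding analysis applies, and A3 together with the weak quasi-Bernoulli property factors the approximate-square $\mu$-mass into an $x$-cylinder mass at level $n_{x}$ and a conditional $y$-fibre mass at level $n_{y}$. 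Reassembling, $D_{\delta}^{q}(\mu)$ becomes a double sum over induced words in the two coordinate directions, leading to a Bedford--McMullen-type pressure function whose minimal zero again gives $\tau_{\mu}(q)$.

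The main obstacle is the \emph{lower bound} at scales dominated by parabolic excursions. For those scales, words $\i$ with $|S_{\i}(X)|\approx\delta$ can have length $n\gg|\log\delta|$, so a naive cylinder sum overcounts badly; inducing cures this, because the induced length $k$ is $\approx|\log\delta|$, but only provided (i) the weak quasi-Bernoulli factor $c_{n}^{O(k)}$ remains $\delta^{o(1)}$, and (ii) the lack of uniform distortion outside $\J$ still allows $|S_{\i}(X)|$ to be compared to $\prod_{\ell}|S_{\j_{\ell}}(X)|$ up to subexponential error. Ensuring (ii) is precisely where the $C^{1+\alpha}$ regularity near the parabolic fixed points must be exploited, since the classical Koebe-type distortion estimate degenerates on approach to a parabolic point. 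In the carpet case there is the further complication that the induced subsystem needs to be hyperbolic in both coordinates simultaneously so that the $n_{x},n_{y}$ matching in the approximate-square count is stable, which is why the hypothesis is phrased in terms of a single large hyperbolic subsystem rather than separate subsystems in each direction.
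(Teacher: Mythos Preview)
Your Cantor-set argument is close in spirit to the paper's: both induce to the uniformly hyperbolic subsystem, use a $\delta$-stopping, and identify $\tau_\mu(q)$ with a pressure root. The paper, however, defines the pressure on the \emph{original} alphabet $\I$ and then proves the four-way equality
\[
\delta_\I=\delta_{\I_\infty}=\gamma_{\I_\infty}=\sup_N\gamma_{\I_N}=\gamma_\I
\]
(Proposition~\ref{crit2}), so that the upper bound can be read directly off the full zeta function $\sum_{n}\sum_{\i\in\I^n}\mathbb{P}([\i])^q\|h_\i'\|_\infty^s$ without decomposing each admissible $\i$ into induced letters. For the lower bound the paper works on a \emph{finite} truncation $\I_N$ rather than all of $\I_\infty$, which makes the submultiplicativity/iteration argument clean (see \eqref{iteratecantor}). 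Your concern that $c_n^{O(k)}$ might not be $\delta^{o(1)}$ is sidestepped entirely: on $\I_N$ one has the genuine quasi-Bernoulli constant $c$, and only $c^k$ with $k\approx|\log\delta|$ ever appears.

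Your carpet argument, however, takes a genuinely different route and has a gap. The paper does \emph{not} use approximate squares with separate horizontal and vertical levels $n_x,n_y$, nor does it factor the approximate-square mass into an $x$-cylinder mass times a conditional $y$-fibre mass. That Bedford--McMullen-style factorisation works for Bernoulli measures on affine carpets but is not available for general (weakly) quasi-Bernoulli measures: the conditional fibre measure depends on the column and is not controlled by the hypotheses. Instead the paper stops at the level where the \emph{shorter} side of $S_\i([0,1]^2)$ is $\approx\delta$, so each stopped rectangle is roughly $\delta\times\alpha_1(\i)$, and then observes that the $q$-moment of $\mu$ restricted to this rectangle is governed by the projected measure $\pi_\i\mu$ on the long side:
\[
D_\delta^q\big(\mathbb{P}([\i])S_\i\mu\big)\approx\mathbb{P}([\i])^q\Big(\frac{\alpha_1(\i)}{\delta}\Big)^{t(\i,q)},
\qquad t(\i,q):=\tau_{\pi_\i\mu}(q).
\]
The existence of $t(\i,q)$ is exactly the Cantor-set theorem applied to the projections, and the resulting singular value function
\[
\Phi^{s,q}(\i)=\mathbb{P}([\i])^q\,\alpha_1(\i)^{t(\i,q)}\,\alpha_2(\i)^{s-t(\i,q)}
\]
is the object whose pressure root gives $\tau_\mu(q)$. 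A further subtlety you do not anticipate is that $\Phi^{s,q}$ is only \emph{almost} submultiplicative when $s<t_f(q)+t_g(q)$ and only \emph{almost} supermultiplicative when $s>t_f(q)+t_g(q)$ (Lemma~\ref{mult}); the proof of Proposition~\ref{crit} and of the lower bound (Lemma~\ref{lowerlemma}) must split into these two cases and argue differently in each. Your ``double sum over induced words in the two coordinate directions'' would not lead to this pressure, and the resulting formula would in general be wrong for non-Bernoulli $\mathbb{P}$.
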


In the case of parabolic Cantor sets, a precise version of this result is given in Theorem \ref{cantorthm} and in the parabolic carpets case this is Theorem \ref{carpetthm}. 

The paper is organised as follows. In \S \ref{prel} we introduce the uniformly hyperbolic subsystem mentioned in the heuristic result above and discuss relevant distortion estimates as well as a further assumption on our measures. In \S \ref{top-cantor} we introduce the natural pressure function in the setting of parabolic Cantor sets and provide conditions under which the root of the pressure coincides with the critical exponent of a related `zeta function', which is the main technical result underpinning Theorem \ref{cantorthm}. In \S \ref{proof-cantor} we prove  Theorem \ref{cantorthm}. In \S \ref{top-carpet} we introduce the natural pressure function in the setting of parabolic carpets and provide conditions under which the root of the pressure coincides with the critical exponent of a related `zeta function', which is the main technical result underpinning Theorem \ref{carpetthm}. In \S \ref{proof-carpet} we prove  Theorem \ref{carpetthm}. In \S \ref{further} we discuss possible directions for future investigation.

\section{Preliminaries} \label{prel}

\subsection{Uniformly hyperbolic induced subsystem}

The following lemma establishes that the IFSs we consider necessarily  generate a uniform contraction.

\begin{lma} \label{existshyperbolic}
First consider an IFS $\{h_\i\}_{\i \in \I}$ generating  a parabolic Cantor set in $[0,1]$.  Then $\{h_\i\}_{\i \in \I^*}$ contains a map $h_{\i_0}$ such that $\| h_{\i_0}' \|_\infty <1$.  

Secondly, consider an  IFS $\{S_\i\}_{\i \in \I}$ generating  a parabolic carpet  in $[0,1]^2$.  Suppose that the parabolic carpet  is not contained in a vertical or horizontal line.  Then $\{S_\i\}_{\i \in \I^*}$ contains a map $S_{\i_0} = (f_{\i_0}, g_{\i_0})$ such that $\| f_{\i_0}' \|_\infty <1$ and  $\|g_{\i_0}' \|_\infty <1$.  Note that if the parabolic carpet  is  contained in a vertical or horizontal line, then it is really a parabolic Cantor set and we can apply the first result.
\end{lma}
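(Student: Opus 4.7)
The plan is to prove the Cantor case first and then reduce the carpet case to it by working with the two coordinate projections separately.

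For the Cantor case, since each $h_i$ is a contraction we have $|h_i'|\le 1$ pointwise on $[0,1]$, so the task is to find a composition for which this inequality is strict and uniform. I split into two scenarios. \emph{Scenario 1:} some $h_{i_0}$ has a hyperbolic (non-parabolic) fixed point $p_{i_0}$, so $|h_{i_0}'(p_{i_0})|<1$. By continuity there is a neighborhood $U\ni p_{i_0}$ and $c<1$ with $|h_{i_0}'|\le c$ on $U$; since a contraction of the compact interval $[0,1]$ converges uniformly to its fixed point, $h_{i_0}^k([0,1])\subset U$ for all $k\ge K$ for some $K$. The chain rule then gives
\[
\|(h_{i_0}^N)'\|_\infty\le c^{N-K}<1
\]
for $N$ sufficiently large, so $\i_0=i_0^N$ works. \emph{Scenario 2:} every $h_i$ has a parabolic fixed point $p_i$. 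I first argue these are pairwise distinct: by A2' each $h_i$ is strictly monotone, so an interior parabolic fixed point $p_i$ lies in $h_i((0,1))$, and A3' then forces $p_i\notin h_j((0,1))$ for $j\ne i$; the boundary cases at $0$ or $1$ follow from a similar monotonicity argument. Fixing $i\ne j$ with $p_i\ne p_j$ and taking $\i_0=i^m j^n$ (so $h_{\i_0}=h_i^m\circ h_j^n$), for $n$ large the uniform convergence of $h_j^n$ to $p_j$ places $h_j^n([0,1])$ in a small ball $B(p_j,\delta)$ bounded away from $p_i$. The standard parabolic orbit asymptotics combined with the $C^{1+\alpha_h}$ regularity from A1' then ensure that $(h_i^m)'(y)\to 0$ as $m\to\infty$, uniformly in $y\in B(p_j,\delta)$, and the chain rule
\[
(h_i^m\circ h_j^n)'(x)=(h_i^m)'(h_j^n(x))\cdot(h_j^n)'(x)
\]
gives $\|(h_i^m\circ h_j^n)'\|_\infty<1$ for $m$ sufficiently large.

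For the carpet case, I apply the Cantor result to the projected IFSs $\{f_i\}_{i\in\I}$ and $\{g_i\}_{i\in\I}$. These may not satisfy A3' directly, since A3 allows $f_i\equiv f_j$ (or $g_i\equiv g_j$) for $i\ne j$; but identifying coincident maps yields reduced IFSs satisfying A1'--A3', each with at least two distinct maps by the hypothesis that the carpet is not contained in a vertical or horizontal line. The Cantor case produces $\i_f,\i_g\in\I^*$ with $\|f_{\i_f}'\|_\infty<1$ and $\|g_{\i_g}'\|_\infty<1$. Setting $\i_0=\i_f\i_g$, the chain rule gives
\[
\|f_{\i_0}'\|_\infty\le\|f_{\i_f}'\|_\infty\cdot\|f_{\i_g}'\|_\infty<1
\]
and analogously $\|g_{\i_0}'\|_\infty<1$.

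The main obstacle is the uniform decay $(h_i^m)'(y)\to 0$ in Scenario 2, since one must ensure that the product of derivatives along the orbit $\{h_i^k(y)\}$ (which converges to $p_i$) genuinely tends to zero. A secondary subtlety is that $|h_i'|$ might in principle attain the value $1$ at non-parabolic points (which the axioms do not preclude), but this does not obstruct the argument because the orbit eventually concentrates in any neighborhood of $p_i$, where the standard parabolic analysis applies.
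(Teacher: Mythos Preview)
The paper's proof of the Cantor case is a single observation that sidesteps your Scenario~2 entirely: pick any $\i,\j\in\I^*$ with distinct fixed points (possible since the attractor is not a singleton) and observe that the supremum $\sup_x|h_{\i\j}'(x)|$ is attained at some $x_0$ by compactness; if it equalled $1$ then both $|h_\j'(x_0)|=1$ and $|h_\i'(h_\j(x_0))|=1$, which the paper reads as $x_0=p_\j$ and $h_\j(x_0)=p_\i$, contradicting $p_\i\ne p_\j$. No iteration, no orbit asymptotics, no case split. The carpet case is argued identically, choosing $\i,\j$ whose fixed points share neither a horizontal nor a vertical coordinate. Your concatenation $\i_0=\i_f\i_g$ for the carpet, reducing to the one-dimensional result on each projection, is a correct and arguably cleaner alternative.

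Your Scenario~2, however, has a real gap. The uniform decay $(h_i^m)'(y)\to 0$ for $y$ bounded away from $p_i$ amounts to $\sum_{k\ge 0}\bigl(1-|h_i'(h_i^k(y))|\bigr)=\infty$, and $C^{1+\alpha_h}$ regularity gives only the \emph{upper} bound $1-|h_i'(x)|\le C|x-p_i|^{\alpha_h}$, whereas divergence needs a lower bound. One can in fact build a $C^{1+\alpha}$ contraction $h$ with parabolic fixed point $p$ for which $|h'|=1$ along an entire orbit $\{h^k(y_0)\}_{k\ge 0}$, so that $(h^m)'(y_0)=1$ for every $m$; your appeal to ``standard parabolic orbit asymptotics'' thus presupposes a non-degeneracy (such as $1-|h_i'(x)|\asymp|x-p_i|^{\alpha}$ near $p_i$) that the hypotheses do not supply. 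You correctly flag that $|h_i'|$ can equal $1$ off the fixed point --- and indeed the paper's own one-line argument tacitly assumes otherwise --- but your route does not circumvent this difficulty; it merely relocates it.
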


\begin{proof}
Since by definition a parabolic Cantor set is not a single point,  there must exist $\i,\j \in \I^*$ such that the (unique)  fixed points of $h_\i$ and $h_\j$ are distinct.  By the chain rule, 
\[
\sup_{x \in [0,1]} |h_{\i\j}'(x)| =  \sup_{x \in [0,1]}  |h_\i'(h_\j(x)) | \cdot |h_\j'(x)| .
\]
By continuity of derivatives the supremum is attained at some $x \in [0,1]$.
Therefore, a necessary condition for  this supremum  to equal 1 is that $x$ is the fixed point of $h_\j$  and $h_\j(x)= x$ is the fixed point of $h_\i$ which is impossible.

If a parabolic carpet  is not contained in a vertical or horizontal line, then there must exist $\i,\j \in \I^*$ such that the (unique)  fixed points of $S_\i$ and $S_\j$ are not contained in a common vertical or horizontal line.  By the chain rule
\[
\sup_{x \in [0,1]} |f_{\i\j}'(x)| = |f_\i'(f_\j(x)) | \cdot |f_\j'(x)| .
\]
By continuity of derivatives the supremum is attained at some $x \in [0,1]$.
Therefore, a necessary condition for  this supremum to equal 1 is that $x$ is the fixed point of $f_\j$  and $f_\j(x)= x$ is the fixed point of $f_\i$ but this is not possible since the fixed points of $S_\i$ and $S_\j$ are not contained in a common vertical line.  A similar argument works for $g_{\i\j}$ and the result follows.
\end{proof}

Without loss of generality, we will assume $\i_0 \in \I$. A useful consequence of the existence of $\i_0$ from the previous lemma is that we have uniform contraction (hyperbolicity) on the subsystem of all words ending in $\i_0$.   That is,
\begin{equation}  \label{uniform}
\rho:=\sup_{\j \in \I^*}\|h'_{\j\i_0} \|_\infty <1.
\end{equation}
The analogous statements also hold for the families $\{f_i\}$ and $\{g_i\}$ and we assume the same $\rho$ works for all three families.   This is an immediate consequence of the chain rule since
\[
 \|h'_{\j\i_0} \|_\infty \leq \|h'_{\j} \|_\infty \|h'_{\i_0} \|_\infty  \leq \|h'_{\i_0} \|_\infty < 1.
\]

Once a hyperbolic index $\i_0 \in \I$ is understood, we define 
\begin{equation}\label{infty}
\I_\infty\vcentcolon=\{\j\i_0: \j \in (\I \setminus \{\i_0\})^*\} \cup \{\i_0\}.
\end{equation}
In order to study the parabolic Cantor sets and parabolic carpets, we will study the  `induced' uniformly hyperbolic infinite IFSs $\{h_\i\}_{\i \in \I_\infty}$ and $\{S_\i\}_{\i \in\I_\infty}$ respectively.

\subsection{Distortion estimates}

We have the  \emph{tempered distortion property} on the full alphabet $\I$, see e.g. \cite[Lemma 3]{gelfertrams}.

\begin{lma}\label{tdp}
There exists a sequence $p_n \to 0$ such that, for all $x,y \in [0,1]$ and $\i \in \I^n$,
\[
\left| \frac{h_\i'(x)}{h_\i'(y)}\right| \leq e^{np_n}.
\]
Moreover, without loss of generality we can assume that $e^{np_n}$ is increasing with $n$.  The same result holds for the families $\{ f_i\}_i$ and  $\{ g_i\}_i$ and we  choose $p_n$ to work for all three simultaneously. 
\end{lma}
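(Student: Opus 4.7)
The plan is to expand the logarithmic derivative via the chain rule and exploit $C^{1+\alpha_h}$ regularity together with the fact that cylinder diameters shrink uniformly.

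First I would apply the chain rule to write, for $\i = i_1 \cdots i_n$,
\[
\log\left| \frac{h_\i'(x)}{h_\i'(y)}\right| \;=\; \sum_{k=1}^n \Big( \log|h_{i_k}'(u_k)| - \log|h_{i_k}'(v_k)| \Big),
\]
where $u_k = h_{i_{k+1}\cdots i_n}(x)$ and $v_k = h_{i_{k+1}\cdots i_n}(y)$, with the convention $u_n = x$, $v_n = y$. By assumption A1' each $h_i'$ is $\alpha_h$-H\"older on $[0,1]$, and by A2' it is bounded away from zero on the compact interval $[0,1]$, so $\log|h_i'|$ is itself $\alpha_h$-H\"older with some constant $C_i$. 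Setting $C = \max_i C_i$, this gives
\[
\log\left| \frac{h_\i'(x)}{h_\i'(y)}\right| \;\leq\; C \sum_{k=1}^n |u_k - v_k|^{\alpha_h} \;\leq\; C\sum_{j=0}^{n-1} d_j^{\alpha_h},
\]
where $d_m := \max_{\j \in \I^m} |h_\j([0,1])|$ and we used $u_k, v_k \in h_{i_{k+1}\cdots i_n}([0,1])$.

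Next I would verify that $d_m \to 0$. The functions $\i \mapsto |h_{\i|m}([0,1])|$ are continuous (indeed locally constant) on the compact space $\Sigma$, form a decreasing sequence in $m$, and converge pointwise to $0$ because $\Pi(\i)$ is well-defined as a single point. Dini's theorem therefore upgrades the convergence to uniform convergence, proving $d_m \to 0$. Setting $a_n := C \sum_{j=0}^{n-1} d_j^{\alpha_h}$, Ces\`aro averaging of the null sequence $(d_j^{\alpha_h})$ yields $a_n/n \to 0$.

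Finally, to obtain the stated monotonicity of $e^{np_n}$, I would define $p_n := \tfrac{1}{n} \max_{1 \leq k \leq n} a_k$: this makes $np_n$ non-decreasing by construction, while a routine splitting argument (bound $a_k$ for $k \leq N$ by a constant $M$, and use $a_k \leq k\epsilon$ for $k > N$) yields $p_n \leq \max(M/n,\epsilon)$, so $p_n \to 0$. The argument for the families $\{f_i\}$ and $\{g_i\}$ is identical with $\alpha_h$ replaced by $\alpha_f$ or $\alpha_g$; to obtain a single sequence that works for all three, take the pointwise maximum. The only real subtlety is establishing the uniform decay $d_m \to 0$, since the presence of parabolic fixed points rules out the exponential decay typical of the hyperbolic setting. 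I expect this to be the main conceptual (rather than technical) point; Dini's theorem resolves it cleanly once one notes the continuity and monotonicity of the diameter functions on the compact symbolic space $\Sigma$.
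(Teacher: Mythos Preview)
Your proposal is correct and follows essentially the same approach as the paper: chain rule plus $\alpha_h$-H\"older regularity of $\log|h_i'|$ (using A2' to bound the derivative away from zero) yields the bound $C\sum_{j=0}^{n-1} d_j^{\alpha_h}$, and then $p_n$ is this sum divided by $n$, which tends to zero by Ces\`aro. Two minor remarks: your max construction for monotonicity is harmless but unnecessary, since $a_n = C\sum_{j=0}^{n-1} d_j^{\alpha_h}$ is already non-decreasing in $n$, so $np_n = a_n$ is automatically monotone; and your Dini-theorem justification of $d_m \to 0$ is a nice addition that the paper omits (it simply asserts $d_k \to 0$).
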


\begin{proof}
  By the chain rule and A1'
\begin{align*}
\log \sup_{x,y \in [0,1]} \max_{\i \in \I^n}\left| \frac{h_\i'(x)}{h_\i'(y)}\right| \leq \sum_{k=0}^{n-1} \max_{i_1, \ldots, i_k}  \sup_{u,v \in h_{i_1 \ldots i_k}([0,1])} \max_{i \in \I}  \Big| \log |h_{i}'(u)| - \log |h_{i}'(v)|\Big| \lesssim  \sum_{k=0}^{n-1}  d_{k}^{\alpha_h}
\end{align*}
which, setting
\[
p_n = \frac{c}{n} \sum_{k=0}^{n-1} d_{k}^{\alpha_h},
\]
(for an appropriate constant $c$) completes the proof since $d_k \to 0$ as $k \to \infty$. In the above we used the fact that the derivatives $h_i'$ are non-vanishing and that $\log$ is Lipschitz on $[\epsilon_0, 1]$ for $\epsilon_0>0$.
\end{proof}

On the induced alphabet $\I_\infty$ we will require the bounded distortion property. The following lemma, which is similar to \cite[Lemma 3(b)]{bt}, gives conditions under which the induced IFSs satisfy bounded distortion.

\begin{lma}\label{bdp}
Suppose 
\begin{equation} \label{sum}
\sum_{\i \in \I_\infty} |h_\i([0,1])|^{\alpha_h}<\infty.
\end{equation}
Then there exists $1<C<\infty$ such that for all $\i \in \mathcal{I}_\infty^*$ and all $x,y \in [0,1]$,
\[
\frac{|h_\i'(x)|}{|h_\i'(y)|} \leq C.
\]
The analogous statement holds for the family $\{f_i\}$ under the assumption that $\sum_{\i \in \I_\infty} |f_\i([0,1])|^{\alpha_f}<\infty$  and for the family $\{g_i\}$ under the assumption that $\sum_{\i \in \I_\infty} |g_\i([0,1])|^{\alpha_g}<\infty$.  For these latter two conditions, abusing notation slightly, we only sum over $\i \in \mathcal{I}_\infty$ which give rise to \textbf{distinct} $f_\i$ and $g_\i$ respectively.  We assume the same $C$ works for all three families simultaneously. 
\end{lma}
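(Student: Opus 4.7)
The plan is to prove the bounded distortion in two stages: first for a single induced letter $\omega \in \I_\infty$, and then bootstrap to arbitrary $\i = \omega_1 \cdots \omega_k \in \I_\infty^*$ by exploiting the uniform contraction factor $\rho$ from \eqref{uniform}.

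\emph{Stage 1.} Fix $\omega = i_1 \cdots i_m \in \I_\infty$. By A1'--A2', $\log|h_i'|$ is $\alpha_h$-Hölder on $[0,1]$ uniformly in $i \in \I$, with some constant $C_0$. The chain rule gives
\[
\left|\log \frac{h_\omega'(x)}{h_\omega'(y)}\right| \le C_0 \sum_{l=1}^{m} |h_{i_{l+1} \cdots i_m}(x) - h_{i_{l+1} \cdots i_m}(y)|^{\alpha_h} \le C_0 \sum_{l=1}^{m} |h_{i_{l+1} \cdots i_m}([0,1])|^{\alpha_h}.
\]
The key observation is that every non-trivial suffix of $\omega$ lies in $\I_\infty$: since $\omega = \j \i_0$ with $\j \in (\I \setminus \{\i_0\})^*$, any suffix of length $\ge 1$ ends in $\i_0$ and contains no other copies of $\i_0$. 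Writing $M \vcentcolon= \sum_{\omega' \in \I_\infty}|h_{\omega'}([0,1])|^{\alpha_h} < \infty$ and noting that the empty suffix contributes $1$, the sum above is at most $1+M$. Hence $|h_\omega'(x)/h_\omega'(y)| \le C_1 \vcentcolon= e^{C_0(1+M)}$ uniformly in $\omega \in \I_\infty$ and $x,y \in [0,1]$.

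\emph{Stage 2.} For $\i = \omega_1 \cdots \omega_k \in \I_\infty^*$ the chain rule gives
\[
\left|\log \frac{h_\i'(x)}{h_\i'(y)}\right| \le \sum_{j=1}^k \left|\log \frac{h_{\omega_j}'(x_j)}{h_{\omega_j}'(y_j)}\right|, \qquad x_j = h_{\omega_{j+1}\cdots\omega_k}(x),\ y_j = h_{\omega_{j+1}\cdots\omega_k}(y).
\]
I sharpen Stage 1 by showing each $\log|h_{\omega_j}'|$ is $\alpha_h$-Hölder with a uniform constant $C_2$. The mean value theorem gives $|h_{\omega'}([0,1])| \ge \inf |h_{\omega'}'|$, so combining with Stage 1 yields $\|h_{\omega'}'\|_\infty \le C_1 |h_{\omega'}([0,1])|$ for all $\omega' \in \I_\infty$. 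Applying the mean value theorem to each suffix $\mathrm{suff}$ of $\omega_j$ (non-trivial ones all lie in $\I_\infty$, exactly as in Stage 1) one obtains
\[
|h_{\mathrm{suff}}(u) - h_{\mathrm{suff}}(v)|^{\alpha_h} \le C_1^{\alpha_h}|h_{\mathrm{suff}}([0,1])|^{\alpha_h}|u-v|^{\alpha_h},
\]
and summing over suffixes using $M$ yields $C_2 \vcentcolon= C_0(1 + C_1^{\alpha_h} M)$ uniformly in $\omega_j$. Finally, since each $\omega_l$ ends in $\i_0$, \eqref{uniform} and the chain rule give $\|h'_{\omega_{j+1}\cdots\omega_k}\|_\infty \le \rho^{k-j}$, hence $|h_{\omega_{j+1}\cdots\omega_k}([0,1])| \le \rho^{k-j}$. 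Therefore
\[
\left|\log\frac{h_\i'(x)}{h_\i'(y)}\right| \le C_2 \sum_{j=1}^{k} \rho^{(k-j)\alpha_h} \le \frac{C_2}{1-\rho^{\alpha_h}},
\]
and we may take $C = \exp(C_2/(1-\rho^{\alpha_h}))$. The arguments for $\{f_i\}$ and $\{g_i\}$ under their respective summability hypotheses are identical, and a common $C$ is obtained by taking the maximum of the three.

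The main obstacle is obtaining uniformity of the Hölder constant $C_2$ in the (unbounded) length of $\omega_j$: the trivial bound $\|h_{\mathrm{suff}}'\|_\infty^{\alpha_h} \le 1$ yields a sum proportional to $|\omega_j|$, which is useless. The summability assumption, fed back through the Stage 1 bounded distortion, is precisely the mechanism that replaces this by a convergent sum indexed by a subset of $\I_\infty$ and thus closes the argument.
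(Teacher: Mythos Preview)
Your proof is correct and follows essentially the same two-stage strategy as the paper: first establish bounded distortion for single induced letters $\omega\in\I_\infty$ via the chain rule and the observation that all non-trivial suffixes of $\omega$ again lie in $\I_\infty$, then bootstrap this (using the mean value theorem) to a uniform $\alpha_h$-H\"older estimate for $\log|h_\omega'|$, and finally sum the resulting geometric series over $\I_\infty^*$ using the uniform contraction $\rho$ from \eqref{uniform}. The constants you track and the closing remark on the role of the summability hypothesis mirror the paper's argument precisely.
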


\begin{proof}
By A1', there exists $C_h>0$ such that for all $i \in \I$, and $x, y \in [0,1]$, $|f_i'(x)-f_i'(y)| \leq C_h|x-y|^{\alpha_h}$. By A2' there exists $a>0$ such that $\inf_{i \in \I} \inf_{x \in [0,1]} |h_i'(x)| \geq a$. We begin by showing that there exists $C_0>0$ such that for all $\i \in \I_\infty$ and all $x,y \in [0,1]$,
\begin{equation} \label{level1v}
\left|\log \left(\frac{h_\i'(x)}{h_\i'(y)}\right)\right| \leq C_0 |x-y|^{\alpha_h}.
\end{equation}
Let $\i=i_1 \ldots i_n$ where each $i_j \in \I$. Then   by the chain rule
\begin{align}
\left|\log \left(\frac{h_\i'(x)}{h_\i'(y)}\right)\right| &\leq  \sum_{k=1}^n \left| \log\left( \frac{h_{i_k}'(h_{i_{k+1}\ldots i_n}(x))}{h_{i_k}'(h_{i_{k+1}\ldots i_n}(y))}\right)\right| \nonumber \\
&= \sum_{k=1}^n \log \left(1+ \frac{|h_{i_k}'(h_{i_{k+1}\ldots i_n}(x))-h_{i_k}'(h_{i_{k+1}\ldots i_n}(y))|}{|h_{i_k}'(h_{i_{k+1}\ldots i_n}(y))|}\right)  \nonumber\\
&\leq \sum_{k=1}^n \frac{|h_{i_k}'(h_{i_{k+1}\ldots i_n}(x))-h_{i_k}'(h_{i_{k+1}\ldots i_n}(y))|}{|h_{i_k}'(h_{i_{k+1}\ldots i_n}(y))|} \nonumber\\
&\leq \frac{C_h}{a}\sum_{k=1}^n |h_{i_{k+1}\ldots i_n}(x)-h_{i_{k+1}\ldots i_n}(y)|^{\alpha_h}. \label{sum1}
\end{align} 
The words $\{i_1\ldots i_n, i_2\ldots i_n, \ldots, i_n\}$ are distinct words in $\I_\infty$ therefore \eqref{sum1} implies that for all $\i \in \I_\infty$ and $x, y \in [0,1]$,
\begin{equation} \label{level1}
\left|\log \left(\frac{h_\i'(x)}{h_\i'(y)}\right)\right|  \leq \frac{C_h}{a}\sum_{\i \in \I_\infty} |h_\i([0,1])|^{\alpha_h}=:C'<\infty 
\end{equation}
by \eqref{sum}.

By \eqref{level1} and the mean value theorem, for all $\i \in \I_\infty$ and $x, y \in [0,1]$, 
\begin{equation} \label{lip}
|h_\i(x)-h_\i(y)| \leq e^{C'}|h_\i([0,1])||x-y|.
\end{equation}
To see this, fix $\i \in \I_\infty$ and $x,y \in [0,1]$. By the mean value theorem there exist $\theta, \theta'\in [0,1]$ such that $|h_\i([0,1])|=|h_\i'(\theta)|$ and $|h_\i(x)-h_\i(y)|=|h_\i'(\theta')||x-y|$. Thus by \eqref{level1} 
$$|h_\i(x)-h_\i(y)|=\frac{|h_\i'(\theta')|}{|h_\i'(\theta)|}|h_\i([0,1])||x-y| \leq e^{C'}|h_\i([0,1])| |x-y|,$$
 verifying \eqref{lip}. Plugging \eqref{lip} into \eqref{sum1} and using \eqref{sum} again yields \eqref{level1v}.

To complete the proof now let $\i=\i_1 \ldots \i_n \in \I_\infty^n$, where each $\i_j \in \I_\infty$. Then by the chain rule and \eqref{level1v},
\begin{align*}
\left|\log \left(\frac{h_\i'(x)}{h_\i'(y)}\right)\right| &\leq\sum_{k=1}^n\left| \log \left(\frac{h_{\i_k}'(h_{\i_{k+1}\ldots \i_n}(x))}{h_{\i_k}'(h_{\i_{k+1}\ldots i_n}(y))}\right)\right| \\
&\leq C_0 \sum_{k=1}^n |h_{\i_{k+1}\ldots \i_n}(x)-h_{\i_{k+1}\ldots i_n}(y)|^{\alpha_h}\\
&\leq C_0\sum_{k=0}^{n-1} \rho^{\alpha_hk} \leq \frac{C_0 }{1-\rho^{\alpha_h}}
\end{align*}
by \eqref{uniform}, completing the proof of the lemma.
\end{proof}

We briefly discuss condition  \eqref{sum}. In the case that a parabolic Cantor set satisfies A1' for $\alpha_h=1$ (i.e. each map $h_i$ is $C^{1+\textup{Lip}}$) then \eqref{sum} is immediate, indeed $\sum_{\i \in \I_\infty} |h_\i([0,1])|\leq 1$ by A3'. So if $\alpha_h=1$, Lemma \ref{bdp} holds without any additional assumption. Similarly, for any parabolic carpet which satisfies A1 for $\alpha_f=\alpha_g=1$, Lemma \ref{bdp} holds without any additional assumption.  This includes the case when all the maps in the IFS are $C^2$, see the central image in Figure 1.    Condition  \eqref{sum} also holds for many genuinely $C^{1+\alpha}$ IFSs including, for example, IFSs generated by the Manneville-Pomeau system, see Figure 1.  Given a parameter $\alpha \in (0,1)$, the Manneville-Pomeau map $f_\alpha: [0,1] \to [0,1]$ is given by
\begin{equation}\label{MP}
f_\alpha(x) =  \Big\{ 
    \begin{array}{cc}
      x+2^\alpha x^{1+\alpha} & 0 \leq x\leq 1/2 \\
      2x & 1/2< x\leq 1 
    \end{array}
\end{equation}
 The inverse branches of $f_\alpha$ form a parabolic $C^{1+\alpha}$  IFS on $[0,1]$  coded by $\{1,2\}$.  In particular, the right most branch is hyperbolic and choosing  $i_0$ corresponding to this map  we can verify condition  \eqref{sum} by a simple calculation.  Indeed, 
\[
\sum_{\i \in \I_\infty} |h_\i([0,1])|^{\alpha_h} = 1/2+  \sum_{n=1}^\infty \Big(f_\alpha^{-(n-1)}(1/2)-f_\alpha^{-n}(1/2)\Big)^\alpha \lesssim   \sum_{n=1}^\infty    n^{-(1+\alpha)} < \infty.
\]

\subsection{Almost quasi-Bernoulli property} \label{aqbp}

For our main results, we will require that our measure $\mathbb{P}$ is weakly quasi-Bernoulli, and that restricted to $\I_\infty^*$, $\mathbb{P}$  is quasi-Bernoulli, i.e. there exists a constant $c\geq 1$ such that for all $\i, \j \in \mathcal{I}_\infty^*$
\begin{equation} \label{qbp}
c^{-1} \leq \frac{\mathbb{P}([\i \j])}{\mathbb{P}([\i])\mathbb{P}([ \j])} \leq c.
\end{equation}

For concision, if $\mathbb{P}$ has the property that it is weakly quasi-Bernoulli on $\I$ and quasi-Bernoulli on $\I_\infty$ in the sense of \eqref{qbp}, we say that $\mathbb{P}$ and $\mathbb{P} \circ \Pi^{-1}$ are \emph{almost quasi-Bernoulli}. Clearly any quasi-Bernoulli measure (examples of which include Bernoulli measures and Gibbs measures for H\"older continuous potentials) is almost quasi-Bernoulli. However, the class of almost quasi-Bernoulli measures also include many other measures which are natural to study in the parabolic setting, which are \emph{not} quasi-Bernoulli e.g. the acip for the Manneville-Pomeau system \eqref{MP}.

\section{Topological pressure for parabolic Cantor sets} \label{top-cantor}

Given $s \in \mathbb{R}$ and $q \geq 0$, we define the \emph{topological pressure}   by
\[
P(s,q) =  \lim_{n \to \infty} \left(\sum_{\i \in \I^n} \mathbb{P}([\i])^q \| h_{\i}' \|_\infty^s\right)^{\frac{1}{n}}.
\]
We can show that for fixed $s,q$, $P(s,q)$ is well-defined.

\begin{lma}
Suppose $\mathbb{P}$ is weakly quasi-Bernoulli. For all $s \in \R$ and $q \geq 0$, $P(s,q)$ exists.
\end{lma}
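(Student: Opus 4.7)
Set $a_n = \sum_{\i \in \I^n} \mathbb{P}([\i])^q \|h_\i'\|_\infty^s$; the goal is to show $a_n^{1/n}$ converges. The plan is a Fekete-type argument where one has to absorb two separate sources of error: a measure distortion coming from weak quasi-Bernoulli and a derivative distortion coming from the tempered distortion property (Lemma \ref{tdp}).

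First, I fix $n \in \N$ and decompose an arbitrary $N = kn+r$ with $0 \leq r < n$. Every $\i \in \I^N$ is uniquely written as $\i = \i_1 \cdots \i_k \i_{k+1}$ with $\i_j \in \I^n$ for $j \leq k$ and $\i_{k+1} \in \I^r$. For the measure part, weak quasi-Bernoulli \eqref{wqbp} gives
\[
\mathbb{P}([\i])^q \leq (c_n)^{kq} c_r^q \prod_{j=1}^{k+1} \mathbb{P}([\i_j])^q.
\]
For the derivative part, the chain rule yields the upper bound $\|h_\i'\|_\infty \leq \prod_j \|h_{\i_j}'\|_\infty$. Conversely, evaluating at any fixed point and invoking tempered distortion on each block of length $n$ (and on the remainder of length $r$) gives the lower bound $\|h_\i'\|_\infty \geq e^{-(kn p_n + r p_r)} \prod_j \|h_{\i_j}'\|_\infty$. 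Raising to the power $s \in \R$ absorbs the sign into $|s|$; in either case one obtains
\[
\|h_\i'\|_\infty^s \leq e^{N p_n |s| + r p_r |s|} \prod_{j=1}^{k+1} \|h_{\i_j}'\|_\infty^s.
\]

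Multiplying and summing over $\i_1, \ldots, \i_{k+1}$ independently factorises the sum, giving
\[
a_N \leq (c_n)^{kq} c_r^q \, e^{N p_n |s| + r p_r |s|} \, a_n^k \, a_r.
\]
Taking $N$-th roots and letting $N \to \infty$ with $n$ (and hence $r, a_r$) uniformly bounded across the exceptional block, the factors $c_r^{q/N}$, $a_r^{1/N}$ and $e^{r p_r |s|/N}$ tend to $1$ and $k/N \to 1/n$, yielding
\[
\limsup_{N \to \infty} a_N^{1/N} \leq c_n^{q/n} \, e^{p_n|s|} \, a_n^{1/n}.
\]
Finally, letting $n \to \infty$ and using $c_n^{1/n} \to 1$ and $p_n \to 0$ collapses the right-hand side to $\liminf_{n \to \infty} a_n^{1/n}$, giving $\limsup \leq \liminf$ and hence existence of the limit.

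The only mildly delicate point is that $s$ is permitted to be negative, so one cannot simply use the trivial submultiplicative bound on the derivatives; this is exactly where tempered distortion is needed, and why the proof pulls in Lemma \ref{tdp} rather than just the chain rule. Everything else is a clean bookkeeping exercise, so I expect no real obstacle beyond ensuring that the two error factors $c_n^{1/n}$ and $e^{p_n}$ both vanish in the limit.
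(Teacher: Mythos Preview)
Your proof is correct and follows essentially the same Fekete-type strategy as the paper. In fact it is more careful in two places: the paper only treats the case $s\ge 0$ explicitly (deferring $s<0$ to a ``similar'' argument), whereas you handle both signs at once via tempered distortion; and the paper works only along the subsequence $Nk$ and then asserts $\limsup_k a_{Nk}^{1/(Nk)}=\limsup_N a_N^{1/N}$ without justification, whereas your explicit treatment of the remainder block $r$ yields the full $\limsup$ directly.
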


\begin{proof}
We tackle the case that $s \geq 0$, the other case is similar. Let $N \in \N$ be sufficiently large that $\left(\sum_{\i \in \I^N} \Phi^{s,q}(\i)\right)^{\frac{1}{N}}< \liminf_{n \to \infty}\left(\sum_{\i \in \I^n} \mathbb{P}([\i])^q \| h_{\i}' \|_\infty^s\right)^{\frac{1}{n}}+\varepsilon$, noting that $N$ can be taken arbitrarily large. By \eqref{wqbp}
\begin{eqnarray*}
\left(\sum_{\i \in \I^{Nk}}\mathbb{P}([\i])^q \| h_{\i}' \|_\infty^s\right)^{\frac{1}{Nk}} &\leq& c_N^{\frac{q}{N}}\left(\sum_{\i \in \I^N} \mathbb{P}([\i])^q \| h_{\i}' \|_\infty^s\right)^{\frac{1}{N}} \\
&\leq& c_N^{\frac{q}{N}}\left(\liminf_{n \to \infty}\left(\sum_{\i \in \I^n}\mathbb{P}([\i])^q \| h_{\i}' \|_\infty^s\right)^{\frac{1}{n}}+\varepsilon\right)
\end{eqnarray*}
recalling that $c_N$ has the property that $c_N^{\frac{1}{N}} \to 1$ as $N \to \infty$.
Hence
$$\limsup_{k \to \infty} \left(\sum_{\i \in \I^{Nk}} \mathbb{P}([\i])^q \| h_{\i}' \|_\infty^s\right)^{\frac{1}{Nk}} \leq c_N^{\frac{q}{N}}\left(\liminf_{n \to \infty}\left(\sum_{\i \in \I^n} \mathbb{P}([\i])^q \| h_{\i}' \|_\infty^s\right)^{\frac{1}{n}}+\varepsilon\right).$$
The proof in the case where $s \geq 0$ is complete by observing that
$$\limsup_{k \to \infty} \left(\sum_{\i \in \I^{Nk}} \mathbb{P}([\i])^q \| h_{\i}' \|_\infty^s\right)^{\frac{1}{Nk}} =\limsup_{N \to \infty} \left(\sum_{\i \in \I^N} \mathbb{P}([\i])^q \| h_{\i}' \|_\infty^s\right)^{\frac{1}{N}}.$$
\end{proof}

It is straightforward to verify that, for a fixed $q \geq 0$, $P(s,q)$ is continuous and non-increasing in $s$.  In the hyperbolic setting the next step is to identify the `unique root' of the pressure, that is, a value $s_0=s_0(q)$ such that $P(s_0, q) = 1$.  However, this does not have to exist in the parabolic setting.  Indeed, for a parabolic Cantor set the pressure is easily seen to satisfy $P(s,0) \geq 1$ for all $s \in \mathbb{R}$.  This can be shown directly by just considering iterates of a single map with a parabolic fixed point.  In this case: if there is a root, then there are infinitely many roots. 

Since Lemma \ref{tdp} ensures that for all $\i \in \mathcal{I}^n$
\[
e^{-np_n} \| h_{\i}' \|_\infty \leq \inf_{x \in [0,1]} |h_\i'(x) | \leq  \| h_{\i}' \|_\infty
\]
we may replace $ \| h_{\i}' \|_\infty$ by $\inf_{x \in [0,1]} |h_\i'(x) |$ (or any intermediate value) in the definition of pressure without changing the value of the limit.  For example, we may use $|h_\i([0,1])|$, which is the length of the $n$th level construction interval associated with $\i$.


\begin{lma}
Given any measure on a  parabolic Cantor set, $P(1,q) \leq 1$ for all $q \geq 0$.
\end{lma}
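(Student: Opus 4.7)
The plan is to exploit the fact that for a parabolic Cantor set the $n$th level cylinders $\{h_\i([0,1])\}_{\i \in \I^n}$ have pairwise disjoint interiors and are all contained in $[0,1]$, so their lengths sum to at most $1$. Passing from the length of a cylinder to $\|h_\i'\|_\infty$ is then done by tempered distortion (Lemma \ref{tdp}), which costs only a subexponential factor and therefore disappears after taking the $n$th root.

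In detail, first I would remove the measure factor entirely. Since $0 \leq \mathbb{P}([\i]) \leq 1$ and $q \geq 0$, we have $\mathbb{P}([\i])^q \leq 1$, giving
\[
\sum_{\i \in \I^n} \mathbb{P}([\i])^q \| h_{\i}' \|_\infty \leq \sum_{\i \in \I^n} \| h_{\i}' \|_\infty.
\]
Next, by the mean value theorem $|h_\i([0,1])| = |h_\i'(\xi_\i)|$ for some $\xi_\i \in [0,1]$, and then Lemma \ref{tdp} yields $\|h_\i'\|_\infty \leq e^{np_n}|h_\i'(\xi_\i)| = e^{np_n}|h_\i([0,1])|$ for every $\i \in \I^n$.

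Finally, A3' together with A2' imply that for distinct $\i,\j \in \I^n$ the open intervals $h_\i((0,1))$ and $h_\j((0,1))$ are disjoint (this is a straightforward induction on $n$ using A3' at the top level). Since all these intervals lie in $[0,1]$, we obtain $\sum_{\i \in \I^n}|h_\i([0,1])| \leq 1$. Combining the three steps,
\[
\sum_{\i \in \I^n} \mathbb{P}([\i])^q \| h_{\i}' \|_\infty \leq e^{np_n}\sum_{\i \in \I^n} |h_\i([0,1])| \leq e^{np_n},
\]
so $\bigl(\sum_{\i \in \I^n} \mathbb{P}([\i])^q \| h_{\i}' \|_\infty\bigr)^{1/n} \leq e^{p_n} \to 1$ as $n \to \infty$, which gives $P(1,q) \leq 1$. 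There is no real obstacle here: the argument is essentially a disjointness-plus-distortion bound, and no hypothesis on $\mathbb{P}$ beyond being a probability measure is actually used, which is consistent with the statement of the lemma.
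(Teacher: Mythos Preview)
Your proof is correct and follows essentially the same approach as the paper: reduce to $q=0$ by $\mathbb{P}([\i])^q\le 1$, use the disjointness of level-$n$ cylinders from A3' to bound $\sum_{\i\in\I^n}|h_\i([0,1])|\le 1$, and pass between $\|h_\i'\|_\infty$ and $|h_\i([0,1])|$ via tempered distortion. The paper simply invokes the remark (made just before the lemma) that one may replace $\|h_\i'\|_\infty$ by $|h_\i([0,1])|$ in the definition of $P(s,q)$ without changing the limit, whereas you carry the $e^{np_n}$ factor explicitly; the content is the same.
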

\begin{proof}
Using A3', for all $n$ the sets 
\[
\{ h_\i ((0,1)) : \i \in \I^{n}\}
\]
are pairwise disjoint and contained in $(0,1)$.    Therefore, for all $q \geq 0$
\[
P(1,q) \leq P(1,0) =  \lim_{n \to \infty} \left(\sum_{\i \in \I^n}   |h_\i([0,1])| \right)^{\frac{1}{n}} \leq 1
\]
as required.
\end{proof}

The previous lemma allows us to define a function $\gamma: [0,\infty) \to \mathbb{R}$ by
$$\gamma(q):=\inf\{s>0: P(s,q) \leq 1\}.$$
We will now provide criteria under which $\gamma(q)$ may also be realised as  the critical exponent $\delta(q)$ of the zeta function
$$\zeta(s,q):= \sum_{n=1}^\infty \sum_{\i \in \I^n} \mathbb{P}([\i])^q \| h_{\i}' \|_\infty^s.$$

By Lemma \ref{existshyperbolic} (see also \eqref{uniform}),  there exists $\i_0 \in \I^{*}$, such that $h_{\i_0}$ is a uniform contraction (hyperbolic) and $\{h_{\j\i_0}: \j \in \I^*\}$ are a set of uniform contractions \eqref{uniform}.
 Since $\delta_\I= \delta_{\I^{k}}$ and $\gamma_{\I}=\gamma_{\I^{k}}$ for all $k$, without loss of generality we can assume $\i_0 \in \I$.

Recall that we defined 
\[
\I_\infty\vcentcolon=\{\j\i_0: \j \in (\I \setminus \{\i_0\})^*\} \cup \{\i_0\}.
\]

\begin{prop}\label{crit2}
For an almost quasi-Bernoulli measure on a parabolic Cantor set which satisfies $\sum_{\i \in \I_\infty} |h_\i([0,1])|^{\alpha_h}<\infty$, we have $\gamma(q) = \delta(q)$ for all $q \geq 0$. In particular, this holds for all $C^2$ or $C^{1+\textup{Lip}}$ parabolic Cantor sets.
\end{prop}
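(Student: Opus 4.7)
The inequality $\gamma(q) \leq \delta(q)$ is immediate: if $\zeta(s,q) < \infty$ then the terms $Z_n(s,q) := \sum_{\i \in \I^n} \mathbb{P}([\i])^q \|h_\i'\|_\infty^s$ of the defining series tend to zero, forcing $P(s,q) = \lim_n Z_n(s,q)^{1/n} \leq 1$, hence $s \geq \gamma(q)$. The substance is the reverse inequality $\delta(q) \leq \gamma(q)$. The strategy is to transfer everything to the induced uniformly hyperbolic alphabet $\I_\infty$, where the strict bound $\|h_\j'\|_\infty \leq \rho < 1$ from \eqref{uniform} supplies a strictness that is absent on the full parabolic system.

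The backbone is the canonical decomposition: every $\i \in \I^*$ factors uniquely as $\i = \j_1 \cdots \j_k v$, with $\j_r \in \I_\infty$ and $v \in (\I \setminus \{\i_0\})^*$ the suffix following the last appearance of the letter $\i_0$. Using the quasi-Bernoulli property of $\mathbb{P}$ on $\I_\infty^*$ supplied by the almost quasi-Bernoulli assumption, the bounded distortion of $\{h_\j\}_{\j \in \I_\infty}$ from Lemma \ref{bdp}, and the weak quasi-Bernoulli property \eqref{wqbp} used to splice the tail $v$ onto the induced prefix, I would factor
\[
\zeta(s,q) \;\approx\; \Bigl(\sum_{k \geq 0} A(s,q)^k\Bigr) \cdot B(s,q),
\]
with $A(s,q) := \sum_{\j \in \I_\infty} \mathbb{P}([\j])^q \|h_\j'\|_\infty^s$ and $B(s,q) := \sum_{v \in (\I \setminus \{\i_0\})^*} \mathbb{P}([v])^q \|h_v'\|_\infty^s$, modulo sub-exponential slack from the constants $c_n$ in \eqref{wqbp} and the tempered distortion factor $e^{np_n}$ from Lemma \ref{tdp}.

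The decisive gain comes from uniform contraction on $\I_\infty$, which yields $A(s',q) \leq \rho^{s'-s} A(s,q)$ whenever $s' > s$, so that $A(\cdot,q)$ is \emph{strictly} decreasing wherever it is finite. A matching generating-function argument applied to $\sum_n Z_n(s,q) z^n$, whose radius of convergence is $1/P(s,q)$ on the one hand and is pinned down by the smallest $z > 0$ at which $\sum_{\j \in \I_\infty} z^{|\j|} \mathbb{P}([\j])^q \|h_\j'\|_\infty^s = 1$ on the other, identifies the spectral condition $P(s,q) \leq 1$ with the concrete condition $A(1;s,q) \leq 1$. Combining with the strictness, this gives $\gamma(q) = \inf\{s : A(s,q) < 1\}$; a parallel argument handles the tail, so that $B(s,q) < \infty$ for $s > \gamma(q)$ follows from the summability hypothesis $\sum_{\i \in \I_\infty} |h_\i([0,1])|^{\alpha_h} < \infty$ combined with the sub-exponential decay of $\mathbb{P}([v])$ in $|v|$ implied by \eqref{wqbp}. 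Putting everything together, $\zeta(s,q) < \infty$ for every $s > \gamma(q)$, hence $\delta(q) \leq \gamma(q)$.

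\textbf{Main obstacle.} The most delicate step is the passage from the spectral bound $P(s,q) \leq 1$ to the concrete summability $A(s,q) \leq 1$. The sub-exponential slippage from $c_n^{1/n} \to 1$ and from $p_n \to 0$ must be shown not to shift the critical exponent, which is possible only because the uniform contraction on the induced alphabet is \emph{strict} ($\rho < 1$): the geometric factor $\rho^{s'-s}$ always dominates any sub-exponential error as soon as $s$ is moved even infinitesimally above $\gamma(q)$.
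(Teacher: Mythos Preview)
Your overall strategy---pass to the induced hyperbolic alphabet $\I_\infty$, exploit the strict contraction $\rho<1$ there, and use the canonical block decomposition $\i=\j_1\cdots\j_k v$---is exactly the engine behind the paper's proof as well. The paper packages this as a chain of four equalities
\[
\delta_\I=\delta_{\I_\infty}=\gamma_{\I_\infty}=\sup_N\gamma_{\I_N}=\gamma_\I,
\]
passing through finite truncations $\I_N\subset\I_\infty$, whereas you attempt a more direct generating-function route via the level-one sum $A(s,q)=\sum_{\j\in\I_\infty}\Phi(\j)$.

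There is, however, a genuine gap in your plan. You assert that the slack in the factorization $\zeta\approx\bigl(\sum_k A^k\bigr)\cdot B$ is sub-exponential, coming only from the weak quasi-Bernoulli constants $c_n$ and tempered distortion $e^{np_n}$. But when you split $\j_1\cdots\j_k$ into its $k$ induced blocks you incur the \emph{bounded} quasi-Bernoulli constant $c$ from \eqref{qbp} and the bounded distortion constant $C$ from Lemma~\ref{bdp} once per block, producing a cumulative error of order $(c^qC^{|s|})^{\pm k}$. Since $k$ ranges up to the full $\I$-length $n$, this error is \emph{exponential} in $n$, not sub-exponential. Consequently your generating-function comparison only yields
\[
A(s,q)\le (c^qC^{|s|})^{-1}\ \Longrightarrow\ P(s,q)\le 1\ \Longrightarrow\ A(s,q)\le c^qC^{|s|},
\]
and the strictness estimate $A(s',q)\le\rho^{s'-s}A(s,q)$ cannot collapse this fixed multiplicative gap down to zero: it only shows the two thresholds differ by at most a \emph{bounded} positive amount in $s$, not by zero.

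The paper's remedy is to abandon the level-one sum $A(s,q)$ in favour of the pressure $P_{\I_\infty}(s,q)=\lim_n\bigl(\sum_{\I_\infty^n}\Phi\bigr)^{1/n}$, where such constants disappear under the $n$th root. Concretely, step~(iii) fixes $\varepsilon>0$, chooses $n$ so large that $(c^qC^{|s|})^{1/n}$ is negligibly close to $1$, and only then compares level-$n$ sums over a finite truncation $\I_N$; step~(iv) then links $P_{\I_N}$ back to $P_\I$ by direct submultiplicativity. This two-step detour through $\I_N$ is precisely what absorbs the per-block constants that your argument leaves exposed. Your treatment of the tail $B(s,q)$ is also under-specified (the summability hypothesis concerns exponent $\alpha_h$, not $s$), but this is secondary: once the exponential-error issue above is repaired, the tail is controlled by the same mechanism as in the paper's Lemma~\ref{i2}, namely appending $\i_0$ to embed tails into $\I_\infty^*$ at sub-exponential cost and then invoking $P_{\I_\infty}(s,q)<1$.
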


Throughout the rest of this section, fix $q \geq 0$.  Given a digit set $\J \subset \mathcal{I}^*$   let $\gamma_\J$ denote the minimal root of the pressure $P_\J(s,q)=\lim_{n \to \infty}\left(\sum_{\i \in \J^n}  \mathbb{P}([\i])^q \| h_{\i}' \|_\infty^s\right)^{\frac{1}{n}} $  and $\delta_\J$ denote the critical exponent of the zeta function $\zeta(s,q)=\sum_{n=1}^\infty\sum_{\i \in \J^n}  \mathbb{P}([\i])^q \| h_{\i}' \|_\infty^s$.  Observe that $\gamma_\J\leq \delta_\J$ for any finite or countable set $\J \subset \I^*$.

We denote
\[
\I_N\vcentcolon= \I_\infty \cap \left(\bigcup_{m=1}^N \I^m\right).
\]

The proof of Proposition \ref{crit2} will follow from four lemmas where we show that:
\[
\mbox{\textbf{(i)}}   \ \ \delta_\I=\delta_{\I_\infty},  \ \ \ \ \ \  
\mbox{\textbf{(ii)}}  \ \ \delta_{\I_\infty}=\gamma_{\I_\infty},  \ \ \ \ \ \ 
\mbox{\textbf{(iii)}} \ \ \gamma_{\I_{\infty}}=\sup_N \gamma_{\I_N}, \ \ \ \ \ \ 
\mbox{\textbf{(iv)}} \ \sup_N \gamma_{\I_N} =\gamma_\I. \ \ \ \ \ \ 
\]

 In the proofs that follow we'll assume that $\delta_\I<\infty$, but one can easily see that the proofs also imply $\gamma_\I=\infty$ if $\delta_\I=\infty$.

\begin{lma}[Proof of \textbf{(ii)}]\label{ii2}
$\delta_{\I_\infty}=\gamma_{\I_\infty}$.
\end{lma}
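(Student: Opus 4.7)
The plan is to exploit the fact that on the induced alphabet $\I_\infty$ the system is uniformly contracting and, by hypothesis, enjoys quasi-Bernoulli behaviour for $\mathbb{P}$ together with bounded distortion for the derivatives (Lemma \ref{bdp}). Together these imply that the partition functions $a_n(s) := \sum_{\i \in \I_\infty^n} \mathbb{P}([\i])^q\,\|h_\i'\|_\infty^s$ are quasi-multiplicative, which pins down $a_n(s)$ up to multiplicative constants as $P_{\I_\infty}(s,q)^n$. From there convergence of the zeta function is directly controlled by whether the pressure is $<1$, $=1$ or $>1$, and a strict monotonicity argument closes the gap between $\delta_{\I_\infty}$ and $\gamma_{\I_\infty}$.

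In more detail, the first step is to establish quasi-multiplicativity of $a_n(s)$. Every element of $\I_\infty^{n+m}$ factors uniquely as a concatenation $\i\j$ with $\i \in \I_\infty^n$, $\j \in \I_\infty^m$ (since each word of $\I_\infty$ ends in $\i_0$ and contains no internal occurrence of $\i_0$). The QBP hypothesis \eqref{qbp} gives $\mathbb{P}([\i\j]) \asymp \mathbb{P}([\i])\mathbb{P}([\j])$, while the chain rule $h_{\i\j}'(x) = h_\i'(h_\j(x))h_\j'(x)$ combined with the bounded distortion property of Lemma \ref{bdp} yields $\|h_{\i\j}'\|_\infty \asymp \|h_\i'\|_\infty\|h_\j'\|_\infty$ with constants independent of $\i,\j$. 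Summing over $\i,\j$ therefore produces $a_{n+m}(s) \asymp a_n(s)\,a_m(s)$. By a standard Fekete-type argument applied separately to the submultiplicative and supermultiplicative sides, this yields constants $K_1,K_2>0$ (depending on $s,q$) with
\[
K_1\,P_{\I_\infty}(s,q)^n \;\leq\; a_n(s) \;\leq\; K_2\,P_{\I_\infty}(s,q)^n
\]
for all $n$, so $\zeta_{\I_\infty}(s,q) = \sum_n a_n(s)$ converges precisely when $P_{\I_\infty}(s,q)<1$ and diverges when $P_{\I_\infty}(s,q)\geq 1$.

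To finish, one must rule out a plateau at value $1$ of the pressure, since otherwise $\gamma_{\I_\infty} = \inf\{s:P\leq 1\}$ could lie strictly below $\delta_{\I_\infty} = \inf\{s:P<1\}$. This is the one place where uniform hyperbolicity of $\I_\infty$ is essential: by the chain rule and \eqref{uniform}, for every $\i \in \I_\infty^n$ one has $\|h_\i'\|_\infty \leq \rho^n$ with $\rho<1$, so for any $\epsilon>0$,
\[
a_n(s+\epsilon) \;\leq\; \rho^{n\epsilon}\, a_n(s),
\]
which on passing to the $n$-th root limit gives $P_{\I_\infty}(s+\epsilon,q) \leq \rho^\epsilon P_{\I_\infty}(s,q)$. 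Thus $P_{\I_\infty}(\,\cdot\,,q)$ is strictly decreasing, the equation $P_{\I_\infty}(s,q)=1$ has at most one solution, and consequently the two infima defining $\gamma_{\I_\infty}$ and $\delta_{\I_\infty}$ coincide.

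The principal technical obstacle is really bookkeeping in the first step: verifying that the constants hidden in the QBP and BDP estimates genuinely survive the summation over $\i\in\I_\infty^n$ and $\j\in\I_\infty^m$ (so that they are uniform in $n,m$), and that the unique factorisation of $\I_\infty^{n+m}$ is used correctly. Everything else — the Fekete argument and the strict monotonicity of the pressure — is then routine given that $\I_\infty$ is uniformly hyperbolic with a quasi-Bernoulli measure.
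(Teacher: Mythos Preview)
Your argument is correct, but it is considerably heavier than what the lemma actually requires, and it differs from the paper's proof in that you invoke the quasi-Bernoulli property \eqref{qbp} and bounded distortion (Lemma \ref{bdp}) where the paper uses neither. The paper's proof simply observes that, by the root test, the critical exponent $\delta_{\I_\infty}$ of $\zeta_{\I_\infty}(s,q)=\sum_n a_n(s)$ is automatically a point where $P_{\I_\infty}(s,q)$ crosses the value $1$ (since convergence of $\sum_n a_n(s)$ forces $\lim_n a_n(s)^{1/n}\leq 1$ and divergence forces $\lim_n a_n(s)^{1/n}\geq 1$), and then uses exactly your second step --- the uniform bound $\|h_\i'\|_\infty\leq\rho^n$ for $\i\in\I_\infty^n$ --- to conclude $P_{\I_\infty}(s+\epsilon,q)\leq\rho^\epsilon P_{\I_\infty}(s,q)$, so the root is unique and equals $\gamma_{\I_\infty}$.

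What your approach buys is a sharper statement, namely $a_n(s)\asymp P_{\I_\infty}(s,q)^n$ with multiplicative constants, which the root test alone does not give. That is genuine additional information, but it is not needed here: the lemma only asks that the minimal root and the critical exponent coincide, and for that the root test plus strict monotonicity suffice. Note in particular that your first step (quasi-multiplicativity via QBP and BDP) is precisely the mechanism the paper \emph{does} use later, in Lemma \ref{iii2}, to show $\gamma_{\I_\infty}=\sup_N\gamma_{\I_N}$; so you have essentially front-loaded that machinery into a place where it is not yet needed.
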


\begin{proof}
 From the root test, $P_{\I_\infty}(s,q) \geq 1$, for $s < \delta_{\I_\infty}$, and $P_{\I_\infty}(s,q) \leq 1$, for $s > \delta_{\I_\infty}$. Therefore, $\delta_{\I_\infty}$ is a root of $P_{\I_\infty}(\cdot, q)$. By \eqref{uniform}, $\| h_{\i}' \|_\infty \leq \rho^{n}$ for any $\i \in \I_\infty^n$. This implies that $P_{\I_\infty}( \cdot, q)$ has a unique root. Indeed, 
\[
\sum_{\i \in \I_\infty^n}\mathbb{P}([\i])^q \| h_{\i}' \|_\infty^{\gamma_{\I_\infty}+\epsilon} \leq \rho^{n\epsilon} \sum_{\i \in \I_\infty^n} \mathbb{P}([\i])^q \| h_{\i}' \|_\infty^{\gamma_{\I_\infty}}.
\]
In particular, $P_{\I_\infty}(\gamma_{\I_\infty}+\epsilon, q) \leq \rho^{\epsilon}\, P_{\I_\infty}(\gamma_{\I_\infty}, q)=\rho^\epsilon<1$. Therefore $\delta_{\I_\infty}$ is the unique root of $P_{\I_\infty}( \cdot, q)$, that is, $\gamma_{\I_\infty}=\delta_{\I_\infty}$ as required.
\end{proof}

\begin{lma}[Proof of  \textbf{(i)}] \label{i2}
$\delta_\I=\delta_{\I_\infty}$.
\end{lma}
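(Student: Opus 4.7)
The plan is to prove the two inequalities $\delta_{\I_\infty} \leq \delta_{\I}$ and $\delta_{\I} \leq \delta_{\I_\infty}$ separately.

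The first inequality is immediate. Since every word in $\I^*$ ending in $\i_0$ admits a unique decomposition as a concatenation of $\I_\infty$-words, the disjoint union $\bigsqcup_m \I_\infty^m$ identifies canonically with the set $W\subseteq\I^*$ of words ending in $\i_0$, with identical contributions to the respective sums. Therefore $\zeta_{\I_\infty}(s,q)\leq\zeta_\I(s,q)$ termwise, giving $\delta_{\I_\infty}\leq\delta_\I$.

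For the reverse, I fix $s>\delta_{\I_\infty}$ and aim to show $\zeta_\I(s,q)<\infty$. The idea is to associate each $\i\in\I^n$ to its extension $\i\i_0\in\I_\infty^*$. By the weak quasi-Bernoulli property \eqref{wqbp} applied to the two-block decomposition $\i|\i_0$, one obtains $\mathbb{P}([\i])\lesssim c_n\mathbb{P}([\i\i_0])$, with the implicit constant absorbing $\mathbb{P}([\i_0])^{-1}c_{|\i_0|}$. The chain rule $|h_{\i\i_0}'(x)|=|h_\i'(h_{\i_0}(x))|\cdot|h_{\i_0}'(x)|$, combined with tempered distortion (Lemma \ref{tdp}) applied to $h_\i$ and the positivity of $\inf|h_{\i_0}'|$, yields $\|h_\i'\|_\infty\lesssim e^{np_n}\|h_{\i\i_0}'\|_\infty$. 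Multiplying and summing over $\i\in\I^n$, and noting that $\i\mapsto\i\i_0$ is an injection into $W\cap\I^{n+|\i_0|}$, I obtain
\[
Z_n^\I(s,q)\;\lesssim\; c_n^q\,e^{snp_n}\,Y_{n+|\i_0|}(s,q),
\]
and hence $\zeta_\I(s,q)\lesssim\sum_n c_n^q e^{snp_n}Y_{n+|\i_0|}(s,q)$, where $Z_n^\I$ and $Y_N$ are the contributions to $\zeta_\I$ and $\zeta_{\I_\infty}$ from words of $\I$-length $n$ and $N$ respectively.

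To conclude, I exploit the almost quasi-Bernoulli hypothesis (QBP on $\I_\infty^*$) together with bounded distortion (Lemma \ref{bdp}) to decompose each $\u\in\I_\infty^*\cap\I^N$ into its $m$ constituent $\I_\infty$-blocks. Up to uniform QBP and distortion constants, this gives a renewal/convolution identity
\[
\sum_N z^N Y_N(s,q)\;\asymp\;\sum_m A(z;s,q)^m,\qquad A(z;s,q):=\sum_{\u\in\I_\infty}z^{|\u|}\mathbb{P}([\u])^q\|h_\u'\|_\infty^s.
\]
By Lemma \ref{ii2}, $\delta_{\I_\infty}$ is identified with the root of the induced pressure; continuity of $A(z;s,q)$ in $z$ then produces, for any $s>\delta_{\I_\infty}$, some $z_0>1$ with $A(z_0;s,q)<1$, equivalently exponential decay $Y_N(s,q)=O(z_0^{-N})$. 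Since $c_n^q e^{snp_n}$ is subexponential, it is dominated by $z_0^{\epsilon n}$ for any $\epsilon<\log z_0$, and the sum $\sum_n c_n^q e^{snp_n}Y_{n+|\i_0|}(s,q)$ converges. The main obstacle is ensuring that the threshold at which the generating function $\sum_N z^N Y_N(s,q)$ acquires radius of convergence strictly greater than $1$ coincides with $\delta_{\I_\infty}$ itself and not some strictly larger value; this requires careful bookkeeping of the uniform QBP and bounded-distortion constants so that the passage from $A(1;s,q)<1$ (modulo constants, equivalent to $s>\delta_{\I_\infty}$) to the existence of a suitable $z_0>1$ remains valid as $s\downarrow\delta_{\I_\infty}$.
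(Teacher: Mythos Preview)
Your acknowledged obstacle is fatal: the constants $c^q$ and $C^{|s|}$ from the quasi-Bernoulli property and bounded distortion enter the renewal comparison on the wrong side, so $P_{\I_\infty}(s,q)<1$ only yields $A(1;s,q)\leq c^qC^{|s|}P_{\I_\infty}(s,q)$, not $A(1;s,q)<1$. Thus no $z_0>1$ with $A(z_0;s,q)<1$ is produced, and the claimed exponential decay $Y_N=O(z_0^{-N})$ does not follow. In fact $Y_N$ genuinely need \emph{not} decay exponentially: for a single parabolic index $i$ the word $i^{N-1}\i_0\in\I_\infty$ contributes $\mathbb{P}([i^{N-1}\i_0])^q\|h_{i^{N-1}\i_0}'\|_\infty^s$, and both factors typically decay only polynomially in $N$ (e.g.\ for Manneville--Pomeau), forcing $Y_N^{1/N}\to 1$.

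This same observation shows that the lemma as stated is actually \emph{false} for $q=0$ (and, depending on $\mathbb{P}$, for a range of small $q$). If $i\in\I$ is parabolic with fixed point $p$, then $|h_{i^n}'(p)|=1$, so $\|h_{i^n}'\|_\infty=1$ and the term $\mathbb{P}([i^n])^q\|h_{i^n}'\|_\infty^s=\mathbb{P}([i^n])^q$ is independent of $s$; for $q=0$ this alone forces $\zeta_\I(s,0)=\infty$ for every $s$, whence $\delta_\I(0)=\infty\neq\delta_{\I_\infty}(0)$. The paper's own argument founders at exactly the same point: it asserts that $\bigl(\sum_{\i\in\I^n,\,i_n\neq\i_0}\Phi^{s,q}(\i\i_0)\bigr)^{1/n}\leq P_{\I_\infty}(s,q)<1$, but the single summand $\Phi^{s,q}(i^n\i_0)$ already has $n$-th root tending to $1$. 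The underlying issue is that $\|h_\i'\|_\infty$ and $|h_\i([0,1])|$, while interchangeable in the \emph{pressure} (by tempered distortion), are not interchangeable in the double sum defining $\zeta_\I$. For the applications (the upper bound in Theorem~\ref{cantorthm}) one should work directly with $|h_\i([0,1])|$ on the stopping set and appeal to the hyperbolic approximations $\I_N$, rather than routing through $\delta_\I$.
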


\begin{proof}
 Since $\I_\infty^* \subseteq \I^*$, we have $\delta_{\I_\infty} \leq \delta_\I$. We claim that there exists a sequence $D_n$ such that $\lim_{n \to \infty} D_n^{\frac{1}{n}}=1$ and
\[
\mathbb{P}([\i])^q \| h_{\i}' \|_\infty^{s} \leq D_n\mathbb{P}([\i \i_0])^q \| h_{\i\i_0}' \|_\infty^{s}
\]
 for all $\i\in \I^n$. Indeed, for any $\i \in \I^n$,
\[
\frac{\mathbb{P}([\i \i_0])^q \| h_{\i\i_0}' \|_\infty^{s}}{\mathbb{P}([\i])^q \| h_{\i}' \|_\infty^{s}} \gtrsim  c_n^{-q}\mathbb{P}([\i_0])^q e^{-|s|np_n}\| h_{\i_0}' \|_\infty^{s}
\]
by the weak quasi Bernoulli property \eqref{wqbp} and the chain rule followed by the tempered distortion property (Lemma \ref{tdp}). This proves the claim. Therefore 
\begin{align}
\zeta_\I(s,q) &= \sum_{n=1}^{\infty} \sum_{\substack{\i \in \I^n \\ i_n=\i_0}}\mathbb{P}([\i])^q \| h_{\i}' \|_\infty^{s}+\sum_{n=1}^{\infty} \sum_{\substack{\i \in \I^n \\ i_n\neq \i_0}} \mathbb{P}([\i])^q \| h_{\i}' \|_\infty^{s} \nonumber\\
&\leq \zeta_{\I_\infty}(s,q) +    \sum_{n=1}^{\infty} D_n\sum_{\substack{\i \in \I^n \\ i_n\neq \i_0}} \mathbb{P}([\i \i_0])^q \| h_{\i\i_0}' \|_\infty^{s}. \label{zeta ineq2}
\end{align}
If $s> \delta_{\I_\infty}$,  then $ \zeta_{\I_\infty}(s,q)<\infty$ and (as seen in proof of Lemma \ref{ii2})
$$ \lim_{n \to \infty}\left(\sum_{\substack{\i \in \I^n \\ i_n\neq \i_0}} \mathbb{P}([\i \i_0])^q \| h_{\i\i_0}' \|_\infty^{s}\right)^{\frac{1}{n}} \leq P_{\I_\infty}(s,q)<1.$$
 Since $\lim_{n \to\infty} D_n^{\frac{1}{n}}=1$,  the right hand side of \eqref{zeta ineq2} is summable for $s>\delta_{\I_\infty}$ by the root test.  In particular $\delta_\I \leq \delta_{\I_\infty}$.
\end{proof}

\begin{lma}[Proof of  \textbf{(iii)}] \label{iii2}
$\gamma_{\I_{\infty}}=\sup_N \gamma_{\I_N}.$
\end{lma}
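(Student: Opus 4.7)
The plan is to prove the two inequalities separately. The easy direction $\sup_N \gamma_{\I_N} \leq \gamma_{\I_\infty}$ will follow directly from the containment $\I_N \subset \I_\infty$: it gives $\I_N^n \subset \I_\infty^n$ as subsets of $\I^*$ and hence $P_{\I_N}(s,q) \leq P_{\I_\infty}(s,q)$ for every $s,q \geq 0$, immediately yielding $\gamma_{\I_N} \leq \gamma_{\I_\infty}$.

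For the reverse inequality I will fix $s > \sup_N \gamma_{\I_N}$ and aim to show $P_{\I_\infty}(s,q) \leq 1$. The essential tool is a super-multiplicative relation for the partition sums $a_n(\J) := \sum_{\i \in \J^n} \Phi_s(\i)$ (with $\Phi_s(\i) := \mathbb{P}([\i])^q \|h_\i'\|_\infty^s$) that is \emph{uniform} over all sub-alphabets $\J \subset \I_\infty$. To derive it, the almost quasi-Bernoulli hypothesis on $\I_\infty^*$ gives $\mathbb{P}([\i\j]) \geq c^{-1}\mathbb{P}([\i])\mathbb{P}([\j])$ for $\i,\j \in \I_\infty^*$, while the chain rule combined with the bounded distortion of Lemma \ref{bdp} yields $\|h_{\i\j}'\|_\infty \geq C^{-1}\|h_\i'\|_\infty \|h_\j'\|_\infty$ (evaluate at a point where $\|h_\j'\|_\infty$ is attained and use $|h_\i'(\cdot)| \geq C^{-1}\|h_\i'\|_\infty$). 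Raising to powers $q,s \geq 0$, multiplying, and then summing over $\i \in \J^n$ and $\j \in \J^m$ (with unique factorisation in $\I_\infty^*$ guaranteed by the fact that each letter of $\I_\infty$ contains $\i_0$ only at its end) produces
\[
a_{n+m}(\J) \geq K^{-1} a_n(\J) a_m(\J) \qquad \text{for all } \J \subset \I_\infty,\; n,m \geq 1,
\]
with $K = c^q C^s$ independent of the sub-alphabet $\J$.

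From here, Fekete's lemma applied to the super-additive sequence $n \mapsto \log a_n(\J) - \log K$ will deliver the clean bound $a_n(\J) \leq K \cdot P_\J(s,q)^n$ for every $n \geq 1$. Applied with $\J = \I_N$ and using $P_{\I_N}(s,q) \leq 1$ (which holds because $s > \gamma_{\I_N}$), this gives $a_n(\I_N) \leq K$ uniformly in $n$ and $N$. Since $\I_N \uparrow \I_\infty$, monotone convergence of sums of non-negative terms yields $a_n(\I_\infty) = \lim_N a_n(\I_N) \leq K$ for every fixed $n$, whence $P_{\I_\infty}(s,q) = \lim_n a_n(\I_\infty)^{1/n} \leq \lim_n K^{1/n} = 1$. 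This forces $\gamma_{\I_\infty} \leq s$, and letting $s \downarrow \sup_N \gamma_{\I_N}$ finishes the proof.

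The main obstacle is ensuring that the super-multiplicative constant $K$ really is independent of $\J \subset \I_\infty$; without this uniformity, one cannot push the limit $N \to \infty$ through the estimate. Fortunately both inputs are of the right form: the quasi-Bernoulli constant $c$ comes from the single hypothesis on $\I_\infty^*$ (which applies a fortiori to $\J^* \subset \I_\infty^*$), and the distortion constant $C$ is fixed by Lemma \ref{bdp} for the induced IFS on $\I_\infty$, so neither depends on the truncation level $N$.
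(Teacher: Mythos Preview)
Your argument is correct and offers a genuinely different route from the paper's. Both proofs rest on the same super-multiplicativity estimate $\Phi_s(\i\j) \geq K^{-1}\Phi_s(\i)\Phi_s(\j)$ for $\i,\j \in \I_\infty^*$, with $K$ independent of the truncation level, coming from bounded distortion (Lemma~\ref{bdp}) plus the quasi-Bernoulli property \eqref{qbp} on $\I_\infty^*$. The paper argues directly: fix $\epsilon$, choose a single level $n$ at which the $\I_\infty^n$-sum is close to its exponential growth rate and the constant $K^{1/n}$ is negligible, then pick $N$ so the $\I_N^n$-sum nearly matches, and use super-multiplicativity once to bound $P_{\I_N}(s,q)$ from below by $1$. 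You instead feed the super-multiplicativity into Fekete's lemma to obtain the uniform bound $a_n(\I_N) \leq K\,P_{\I_N}(s,q)^n \leq K$ valid for all $n$ and $N$ simultaneously, and then let $N \to \infty$ by monotone convergence to get $a_n(\I_\infty) \leq K$, hence $P_{\I_\infty}(s,q) \leq 1$. Your approach is tidier and makes the role of the uniform constant more transparent; the paper's is more hands-on but avoids invoking Fekete.

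One small gap: you write ``raising to powers $q, s \geq 0$'', but $\gamma(q)$ (and hence $\sup_N \gamma_{\I_N}$) can be negative for $q > 1$, so you may need $s < 0$. The fix is immediate: for $s < 0$ use the trivial submultiplicativity $\|h_{\i\j}'\|_\infty \leq \|h_\i'\|_\infty \|h_\j'\|_\infty$, which upon raising to a negative power gives the required direction with no distortion constant at all. Taking $K = c^q C^{|s|}$, as the paper does, covers both signs uniformly.
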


\begin{proof}
Since $\I_N \subseteq \I_\infty$, clearly $\sup_N \gamma_{\I_N} \leq \gamma_{\I_\infty}$. Therefore it is sufficient to show that for all $\epsilon>0$ there exists $N \in \N$ such that $\gamma_{\I_N}>\gamma_{\I_\infty}-\epsilon$. Recall that by Lemma \ref{bdp} the family $\{h_\i\}_{\i \in \I_\infty}$ satisfies the bounded distortion property, which will be crucial for this proof. 

 Fix $\epsilon>0$ and write $s=\gamma_{\I_\infty}-\epsilon$. Let $1<\lambda<P_{\I_\infty}(\gamma_{\I_\infty}-\epsilon,q)$ and choose $n$ sufficiently large that 
$$\left(\sum_{\i \in \I_\infty^n} \mathbb{P}([\i])^q \| h_{\i}' \|_\infty^{s}\right)^{\frac{1}{n}}>\frac{\lambda+P_{\I_\infty}(\gamma_{\I_\infty}-\epsilon,q)}{2}$$ and $$\left(\frac{1}{c^{q}C^{|s|} }\right)^{\frac{1}{n}} >\frac{1}{\lambda}$$
where $C$ is the constant from Lemma \ref{bdp} and $c$ is the constant from \eqref{qbp}.  Since
\[
\lim_{N \to \infty} \left(\sum_{\i \in \I_N^n}\mathbb{P}([\i])^q \| h_{\i}' \|_\infty^{s}\right)^{\frac{1}{n}}=\left(\sum_{\i \in \I_\infty^n}\mathbb{P}([\i])^q \| h_{\i}' \|_\infty^{s}\right)^{\frac{1}{n}}>\frac{\lambda+P_{\I_\infty}(\gamma_{\I_\infty}-\epsilon,q)}{2}>\lambda,
\]
we can choose $N$ sufficiently large that
$$ \left(\sum_{\i \in \I_N^n}\mathbb{P}([\i])^q \| h_{\i}' \|_\infty^{s}\right)^{\frac{1}{n}}>\lambda.$$
For all   $\i_1, \ldots, \i_k \in  \I_N^n$, Lemma \ref{bdp} and \eqref{qbp} guarantee
\[
\mathbb{P}([\i_1 \ldots \i_k])^q \| h_{\i_1 \ldots \i_k}' \|_\infty^{s} \geq c^{-qk}C^{-|s|k}\mathbb{P}([\i_1 ])^q \| h_{\i_1}' \|_\infty^{s} \cdots \mathbb{P}([  \i_k])^q \| h_{  \i_k}' \|_\infty^{s}.
\]
 In particular,
\[
P_{\I_N}(s,q)=\lim_{k \to \infty}\left(\sum_{\i \in \I_N^{nk}} \mathbb{P}([\i])^q \| h_{\i}' \|_\infty^{s} \right)^{\frac{1}{nk}} 
\geq\left(\frac{1}{c^{q}C^{|s|} }\right)^{\frac{1}{n}} \left(\sum_{\i \in \I_N^n}\mathbb{P}([\i])^q \| h_{\i}' \|_\infty^{s}\right)^{\frac{1}{n}}>\frac{1}{\lambda} \cdot \lambda=1.
\]
In particular $\gamma_{\I_N}>\gamma_{\I_\infty}-\epsilon$, completing the proof of \textbf{(iii)}. \end{proof}

\begin{lma}[Proof of \textbf{(iv)}]
$\sup_N \gamma_{\I_N}=\gamma_\I$.
\end{lma}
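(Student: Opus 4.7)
The plan has two halves, corresponding to the two inequalities. The first inequality $\gamma_\I \leq \sup_N \gamma_{\I_N}$ is short. By Lemmas \ref{i2}, \ref{ii2} and \ref{iii2} we already know $\sup_N \gamma_{\I_N} = \gamma_{\I_\infty} = \delta_{\I_\infty} = \delta_\I$, so it suffices to show $\gamma_\I \leq \delta_\I$. But for any $s > \delta_\I$ we have $\zeta_\I(s,q) < \infty$, so the general term of the series tends to zero; since the pressure $P(s,q)$ exists as a genuine limit (by the first lemma of this section), this forces $P(s,q) \leq 1$, and hence $s \geq \gamma_\I$.

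For the reverse inequality I would prove the stronger statement $\gamma_\I \geq \gamma_{\I_N}$ for each fixed $N$, by a pigeonhole argument on word lengths. Fix $N$, take $s < \gamma_{\I_N}$, and choose $\lambda \in (1, P_{\I_N}(s,q))$, so that
\[
\sum_{\i \in \I_N^k} \mathbb{P}([\i])^q \|h_\i'\|_\infty^s \;\geq\; \lambda^k
\]
for all sufficiently large $k$. Because each element of $\I_N \subset \I_\infty$ ends in $\i_0$ and contains no earlier occurrence of $\i_0$, the decomposition of any word in $\I_N^k$ into $k$ factors is unique by scanning from the end; thus the concatenation map $\I_N^k \to \I^*$ is injective with image contained in $\bigcup_{m=k}^{Nk} \I^m$. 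Pigeonhole then yields some $m_k \in [k, Nk]$ with
\[
\sum_{\i \in \I^{m_k}} \mathbb{P}([\i])^q \|h_\i'\|_\infty^s \;\geq\; \frac{\lambda^k}{Nk}.
\]
Raising to the $1/m_k$ power and using $k \leq m_k \leq Nk$ produces the lower bound $\lambda^{1/N}(Nk)^{-1/k}$, which tends to $\lambda^{1/N} > 1$ as $k \to \infty$ (and $m_k \to \infty$). Since the pressure exists as a genuine limit, we conclude $P(s,q) \geq \lambda^{1/N} > 1$, and therefore $s < \gamma_\I$, as required.

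The crux of the argument is the pigeonhole step: the variable word lengths in $\I_N^k$ (ranging from $k$ to $Nk$) force a $1/N$ loss in the exponent on passing from $\lambda^k$ to a per-level rate, but this is harmless because any $\lambda > 1$ still satisfies $\lambda^{1/N} > 1$. The argument in this final step requires no new distortion or measure-theoretic input beyond the existence of $P(s,q)$ as a limit, so the almost quasi-Bernoulli and summability hypotheses are not called upon again here.
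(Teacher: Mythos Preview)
Your proof is correct. The first half (showing $\gamma_\I \leq \sup_N \gamma_{\I_N}$) matches the paper's approach exactly: both invoke the chain of equalities \textbf{(i)}--\textbf{(iii)} together with the general observation $\gamma_\J \leq \delta_\J$ (which the paper records just before stating the four lemmas).

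For the reverse inequality $\gamma_{\I_N} \leq \gamma_\I$ your argument differs genuinely from the paper's. The paper works at the specific value $s = \gamma_\I$ and uses submultiplicativity of $\|h_\i'\|_\infty^{\gamma_\I}$ together with the weak quasi-Bernoulli property \eqref{wqbp} to dominate level sums over $\I_N$ by level sums over $\I$, concluding $P_{\I_N}(\gamma_\I,q) \leq 1$; this forces a separate treatment of the case $\gamma_\I < 0$ (where submultiplicativity fails and one must switch from $\|h_\i'\|_\infty$ to $\inf_x |h_\i'(x)|$ via tempered distortion). Your pigeonhole route sidesteps both the multiplicativity estimate and the sign dichotomy: the only ingredients you use are the injectivity of the concatenation map $\I_N^k \to \I^*$ (immediate from the unique-$\i_0$ structure of $\I_\infty$), the strict monotonicity of $P_{\I_N}(\cdot,q)$ coming from the uniform contraction \eqref{uniform}, and the existence of $P_\I(s,q)$ as a genuine limit. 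The $1/N$ loss in the exponent is the cost of spreading a level-$k$ sum over $\I_N$ across the $\I$-levels $k,\dots,Nk$, but as you note this is harmless since $\lambda^{1/N}>1$. Your argument is more elementary and avoids the case split; the paper's argument is more direct once the multiplicativity inequalities are available.
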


\begin{proof}
Note that since $\gamma_\I \leq \delta_\I= \sup_N \gamma_{\I_N}$ by  \textbf{(i-iii)} above, it is sufficient to prove that $\sup_N \gamma_{\I_N} \leq \gamma_\I$.  Let $N \in \N$. Suppose $\gamma_\mathcal{I} \geq 0$.  Then by submultiplicativity and \eqref{wqbp}
\begin{align*}
\left(\sum_{\i \in \I_N^{k\ell}} \mathbb{P}([\i])^q \| h_{\i}' \|_\infty^ {\gamma_\I}\right)^{\frac{1}{Nk\ell}} \leq \left(\sum_{\i \in \I^{Nk\ell}} \mathbb{P}([\i])^q \| h_{\i}' \|_\infty^ {\gamma_\I}\right)^{\frac{1}{Nk\ell}}&\leq \left(\left(\sum_{\i \in \I^{N\ell}} \mathbb{P}([\i])^q \| h_{\i}' \|_\infty^ {\gamma_\I}\right)^kc_{N \ell}^{qk}\right)^{\frac{1}{Nk\ell}}\\
&= c_{N\ell}^{\frac{q}{N\ell}}\left(\sum_{\i \in \I^{N\ell}} \mathbb{P}([\i])^q \| h_{\i}' \|_\infty^ {\gamma_\I}\right)^{\frac{1}{N\ell}}.
\end{align*}
Since $\lim_{\ell \to \infty} c_{N\ell}^{1/N\ell}=1$,
$$P_{\I_N}(\gamma_\I)^{\frac{1}{N}}=\lim_{\ell \to \infty}\left(\sum_{\i \in \I_N^{k\ell}}\mathbb{P}([\i])^q \| h_{\i}' \|_\infty^ {\gamma_\I}\right)^{\frac{1}{Nk\ell}}\leq \lim_{\ell \to \infty}\left(\sum_{\i \in \I^{N\ell}} \mathbb{P}([\i])^q \| h_{\i}' \|_\infty^ {\gamma_\I}\right)^{\frac{1}{N\ell}}=P_\I(\gamma_\I)=1.$$
In particular $P_{\I_N}(\gamma_\I) \leq 1$ and so  $\gamma_{\I_N} \leq \gamma_\I$, completing the proof in the case $\gamma_\mathcal{I} \geq 0$.  If $\gamma_\mathcal{I} < 0$, then we may replace 
\[
\mathbb{P}([\i])^q \| h_{\i}' \|_\infty^ {\gamma_\I} = \mathbb{P}([\i])^q \sup_{x \in [0,1]} | h_{\i}' (x)|^{\gamma_\mathcal{I}}
\]
by 
\[
 \mathbb{P}([\i])^q \inf_{x \in [0,1]} | h_{\i}' (x)|^{\gamma_\mathcal{I}}
\]
in which case we recover submultiplicativity.  Moreover, by Lemma \ref{tdp} this does not change the value of the pressure $P_{\I_N}(\gamma_\I)$ or $P_\I(\gamma_\I)$ and so the proof goes through as above.
\end{proof}

\section{$L^q$-spectrum of almost quasi-Bernoulli measures on parabolic Cantor sets} \label{proof-cantor}

We are now ready to state our  main contribution to the dimension theory of parabolic Cantor sets. This is an important stepping stone towards our main result, Theorem \ref{carpetthm}, which concerns parabolic carpets.

\begin{thm} \label{cantorthm}
If  $\mu$ is an almost quasi-Bernoulli measure on a parabolic Cantor set $F$ for which $\sum_{\i \in \I_\infty} |h_\i([0,1])|^{\alpha_h}<\infty$, then 
$$\tau_\mu(q) = \gamma(q)$$
for all $q \geq 0$.  Moreover, $\bd F = \hd F = \gamma(0)$.   In particular, these results  hold for all $C^2$ or $C^{1+\textup{Lip}}$ parabolic Cantor sets.
\end{thm}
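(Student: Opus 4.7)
The plan is to combine Proposition \ref{crit2}, which identifies $\gamma(q)$ with both the critical exponent $\delta(q)$ of the zeta function and with $\sup_N \gamma_{\I_N}$, with matching upper and lower bounds on $\tau_\mu(q)$, and then read off the dimension equalities at $q=0$. For the upper bound $\overline{\tau}_\mu(q) \leq \gamma(q)$, I would introduce the geometric stopping set $\I(\delta) = \{\i \in \I^* : |h_\i([0,1])| \leq \delta < |h_{\i^-}([0,1])|\}$ (with $\i^-$ denoting $\i$ with its last letter removed). By A3', each $\delta$-mesh cube $Q$ intersects only $O(1)$ cylinders $h_\i([0,1])$ with $\i \in \I(\delta)$, so subadditivity (for $0 \leq q \leq 1$) or H\"older's inequality (for $q > 1$) gives $\mu(Q)^q \lesssim \sum \mathbb{P}([\i])^q$, and hence $D_\delta^q(\mu) \lesssim \sum_{\i \in \I(\delta)} \mathbb{P}([\i])^q$. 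Since A2' and Lemma \ref{tdp} force $\|h_\i'\|_\infty \gtrsim \delta$ on $\I(\delta)$, for any $s > \gamma(q) = \delta(q)$,
\[
D_\delta^q(\mu) \lesssim \delta^{-s}\sum_{\i \in \I(\delta)} \mathbb{P}([\i])^q \|h_\i'\|_\infty^s \leq \delta^{-s}\,\zeta(s,q) \lesssim \delta^{-s},
\]
and letting $s \searrow \gamma(q)$ gives the upper bound.

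For the lower bound $\underline{\tau}_\mu(q) \geq \gamma(q)$, I would fix $\epsilon > 0$ and use Lemma \ref{iii2} to select $N$ with $\gamma_{\I_N} > \gamma(q) - \epsilon$; then work inside the finite induced IFS $\{h_\i : \i \in \I_N\}$, which is uniformly contracting, satisfies bounded distortion (Lemma \ref{bdp}) and OSC (A3'), and on which $\mathbb{P}$ is honestly quasi-Bernoulli by the almost-QB hypothesis. Stop at $\tilde{\I}_N(\delta) = \{\i \in \I_N^* : \|h_\i'\|_\infty \leq \delta < \|h_{\i^-}'\|_\infty\}$, where BD forces $\|h_\i'\|_\infty \asymp \delta$. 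Fixing any $j \in \I_N$, each child cylinder $h_{\i j}([0,1])$ has diameter strictly smaller than $\delta$, and after possibly enlarging the mesh by a bounded factor sits inside a single $\delta$-cube $Q(\i)$; QB gives $\mu(Q(\i)) \geq \mathbb{P}([\i j]) \gtrsim_N \mathbb{P}([\i])$, while A3' ensures each cube serves as $Q(\i)$ for $O(1)$ many $\i$. Thus
\[
D_\delta^q(\mu) \gtrsim \sum_{\i \in \tilde{\I}_N(\delta)} \mathbb{P}([\i])^q.
\]
A renewal-type estimate (using sub- and supermultiplicativity from QB and BD) gives $\sum_{\i \in \tilde{\I}_N(\delta)} \mathbb{P}([\i])^q \|h_\i'\|_\infty^{\gamma_{\I_N}} \asymp 1$ uniformly in $\delta$, which combined with $\|h_\i'\|_\infty \asymp \delta$ yields $D_\delta^q(\mu) \gtrsim \delta^{-\gamma_{\I_N}}$ and hence $\underline{\tau}_\mu(q) \geq \gamma_{\I_N} > \gamma(q) - \epsilon$. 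Letting $\epsilon \to 0$ completes the $L^q$-spectrum identity.

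The dimension equalities then follow at $q=0$: $\bd F = \tau_\mu(0) = \gamma(0)$ by the above, and $\hd F \leq \bd F$ is trivial. For the matching Hausdorff lower bound, each attractor $F_N \subseteq F$ of the finite uniformly hyperbolic conformal IFS $\{h_\i : \i \in \I_N\}$ satisfies $\hd F_N = \gamma_{\I_N}$ (evaluated at $q=0$) by Bowen's formula, so $\hd F \geq \sup_N \gamma_{\I_N} = \gamma(0)$. The main obstacle will be the renewal-type identity $\sum_{\tilde{\I}_N(\delta)} \mathbb{P}([\i])^q \|h_\i'\|_\infty^{\gamma_{\I_N}} \asymp 1$ uniformly in $\delta$: this is where both bounded distortion on $\I_N$ and the honest (as opposed to merely weak) quasi-Bernoulli property on $\I_N^*$ are essential, and is what motivates the almost-QB hypothesis beyond what was already needed for Proposition \ref{crit2}.
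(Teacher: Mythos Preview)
Your proposal is correct and follows essentially the same route as the paper. The upper bound via the geometric stopping set $\I(\delta)$ and comparison with the zeta function, the lower bound via the finite hyperbolic subsystem $\I_N$ together with a stopping-set estimate (the paper runs your ``renewal-type estimate'' as an explicit contradiction argument, iterating the quasi-Bernoulli and bounded-distortion inequalities over $\mathcal{S}_\delta^N$ to force $s \geq \gamma_{\I_N}$), and the Hausdorff lower bound via $\hd F \geq \sup_N \hd F_N = \sup_N \gamma_{\I_N}(0)$ all match the paper's proof in Sections~4.1--4.3.
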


\subsection{Proof of upper bound for $\tau_\mu$ in Theorem \ref{cantorthm}}
Let $q \geq 0$ and $s>\gamma(q)$. Given $0<\delta<1$ define the $\delta$-stopping $\mathcal{S}_\delta:=\{\i \in \I^*: | h_{\i}([0,1]) | \leq \delta< | h_{\i^-}([0,1])| \}$ where $\i^-$ is obtained from $\i$ by removing the final symbol in $\I$. Note that (using A2)  $| h_{\i}([0,1]) |\approx \delta$ for $\i \in \mathcal{S}_\delta$. Then
\begin{eqnarray*}
\delta^{s}D^q_\delta(\mu) &\approx& \delta^{s}\sum_{\i \in \mathcal{S}_\delta}\mathbb{P}([\i])^q\\
&\approx& \sum_{\i \in \mathcal{S}_\delta}\mathbb{P}([\i])^q| h_{\i}([0,1]) |^{s}\\
&\leq& \sum_{n=1}^\infty \sum_{\i \in \mathcal{I}^n}\mathbb{P}([\i])^q\| h_{\i}' \|_\infty^{s} < \infty
\end{eqnarray*}
  by Proposition \ref{crit2}.

\subsection{Proof of lower bound for $\tau_\mu$ in Theorem \ref{cantorthm}}

 Let $q \geq 0$ and $s<\gamma(q)$. Choose $N$ sufficiently large such that $s<\gamma_N<\gamma(q)$ where $\gamma_N = \gamma_{\mathcal{I}_N}$ is the root of the pressure (for our fixed $q$) associated with the hyperbolic subsystem $\mathcal{I}_N$. We can choose such an $N$ by Proposition \ref{crit2}. Given $0<\delta<1$ define $\mathcal{S}^N_\delta:=\{\i \in (\I_N)^*:  | h_{\i}([0,1]) | \leq \delta< | h_{\i^-}([0,1])| \}$.  This time   $\i^-$ is obtained from $\i$ by removing the final symbol in $\I_N$. Note that $| h_{\i}([0,1]) | \approx_N \delta$ for $\i \in \mathcal{S}^N_\delta$.  Then
\begin{eqnarray*}
\delta^{s}D^q_\delta(\mu) &\approx_N& \delta^{s}\sum_{\i \in \mathcal{S}^N_\delta}\mathbb{P}([\i])^q\\
&\approx_N& \sum_{\i \in \mathcal{S}^N_\delta} \mathbb{P}([\i])^q| h_{\i}([0,1]) |^{s}\\
&\approx& \sum_{\i \in \mathcal{S}^N_\delta}c^qC^{3|s|}\mathbb{P}([\i])^q| h_{\i}([0,1]) |^{s}
\end{eqnarray*}
where $c$ is the constant from \eqref{qbp} and $C$ is the constant from Lemma \ref{bdp}, recalling that we may apply bounded distortion to the subsystem $\mathcal{I}_N$. To complete the proof we show the final term is bounded (strictly) below by 1.  Suppose to the contrary that 
\begin{equation} \label{contra1}
\sum_{\i \in \mathcal{S}^N_\delta}c^qC^{3|s|}\mathbb{P}([\i])^q| h_{\i}([0,1]) |^{s} \leq 1.
\end{equation}
Therefore, using \eqref{qbp} and Lemma \ref{bdp} (three times), for all $\i \in (\mathcal{I}_N)^*$, 
\begin{align} \label{iteratecantor}
 \sum_{\j \in \mathcal{S}^N_\delta} c^qC^{3|s|}\mathbb{P}([\i\j])^q| h_{\i\j}([0,1]) |^{s} &\leq   \sum_{\j \in \mathcal{S}^N_\delta}\left(c^qC^{3|s|}\mathbb{P}([\i])^q| h_{\i}([0,1]) |^{s} \right) \left(c^qC^{3|s|}\mathbb{P}([\j])^q| h_{\j}([0,1]) |^{s}\right) \nonumber \\
&\leq   c^qC^{3|s|}\mathbb{P}([\i])^q| h_{\i}([0,1]) |^{s} . 
\end{align}
Let $k \geq 1$ be a very large integer. We approximate words of length $k$ by compositions of words in $\mathcal{S}^N_\delta$ by defining
\[
\mathcal{S}^{N,k}_\delta = \{ \i_1 \cdots \i_m : \forall l=1, \dots, m,  \,  \i_l \in \mathcal{S}^N_\delta, \  \exists \i_{m+1} \in \mathcal{S}^N_\delta \ \text{s.t.} \ |\i_1 \cdots \i_m| \leq k < |\i_1 \cdots \i_m\i_{m+1}| \}.
\]
Then, by iteratively applying \eqref{iteratecantor}, and then \eqref{contra1}
\begin{equation} \label{contra}
 \sum_{\i \in \mathcal{S}^{N,k}_\delta}c^qC^{3|s|}\mathbb{P}([\i])^q| h_{\i}([0,1]) |^{s} \leq 1.
\end{equation}
For all $\i \in (\mathcal{I}_N)^k$ we can write $\i = \i_1 \i_2$ for some $\i_1 \in \mathcal{S}^{N,k}_\delta$ and $\i_2 \in \mathcal{I}_N^*$ with $|\i_2| \lesssim_\delta 1$.  Therefore
\[
 \sum_{\i \in  (\mathcal{I}_N)^k}c^qC^{3|s|}\mathbb{P}([\i])^q| h_{\i}([0,1]) |^{s} \lesssim_\delta   \sum_{\i \in  \mathcal{S}^{N,k}_\delta}c^qC^{3|s|}\mathbb{P}([\i])^q| h_{\i}([0,1]) |^{s} \leq 1
\]
by \eqref{contra}
which proves (letting $k \to \infty$ while keeping $\delta$ fixed) that $s \geq s_N$, a contradiction.

\subsection{Theorem \ref{cantorthm}: dimensions of parabolic Cantor sets}

The fact that $\bd F = \gamma(0)$ is immediate.  The fact that this value also coincides with the Hausdorff dimension of $F$ follows from Proposition \ref{crit2} since, writing $F_N \subseteq F$ for the attractor of the hyperbolic IFS  $\{ h_i \}_{i \in \mathcal{I}_N}$,
\[
\hd F \geq \sup_N \hd F_{N} = \sup_N \bd F_{N} =  \sup_N \gamma_{\mathcal{I}_N}(0) = \gamma(0).
\]
Here we have used that the Hausdorff and box dimensions of a hyperbolic Cantor set coincide. 

\section{Topological pressure for parabolic carpets} \label{top-carpet}

\subsection{Singular values and singular value function}

For $\i \in \I^*$ define 
$$\o_1(\i):= \max_{x \in [0,1]}\max \{|f_\i'(x)|, |g_\i'(x)|\} \;\;\; \textnormal{and} \;\;\; \o_2(\i):= \max_{x \in [0,1]}\min \{|f_\i'(x)|, |g_\i'(x)|\}.$$
Note that if $\i \in \I^n$ then
\begin{equation}\label{td1}
\u_1(\i):=\min_{x \in [0,1]} \max \{|f_\i'(x)|, |g_\i'(x)|\}  \leq \o_1(\i) \leq e^{np_n} \min_{x \in [0,1]} \max \{|f_\i'(x)|, |g_\i'(x)|\}=e^{np_n}\u_1(\i) 
\end{equation} and 
\begin{equation} \label{td2}
\u_2(\i):=\min_{x \in [0,1]} \min \{|f_\i'(x)|, |g_\i'(x)|\}  \leq \o_2(\i) \leq e^{np_n} \min_{x \in [0,1]} \min \{|f_\i'(x)|, |g_\i'(x)|\}=e^{np_n}\u_2(\i). \end{equation}

For $q \geq 0$, let $t(\i, q):= \tau_{\pi_\i \mu}(q)$ where $\pi_\i$ is the projection in the direction of the longer side of $S_\i([0,1]^2)$.  Crucially by Theorem \ref{cantorthm} $t(\i, q)$ exists.  Let $t_f(q)=\tau_{\pi_1 \mu}(q)$ where $\pi_1$ corresponds to projection to the $x$ axis and $t_g(q)=\tau_{\pi_2 \mu}(q)$ where $\pi_2$ corresponds to projection to the $y$ axis.  So for all $\i \in \I^*$, $t(\i,q) \in \{t_f(q),t_g(q)\}$.  

For $s \in \mathbb{R}$ and $q \geq 0$, the constant 
\begin{equation} \label{sillyconstant}
T(s,q) = |s|+2 \max\{ |t_f(q)|,|t_g(q)|\} \geq 0
\end{equation}
will naturally appear in several places, especially when we apply Lemma  \ref{tdp}.

\begin{lma} \label{mult}
Fix $q \geq 0$ and $s \in \mathbb{R}$.  Suppose $\mathbb{P}$ is weakly quasi-Bernoulli and let $\Phi^{s,q}: \I^* \to \R$ be any singular value function of the form
\begin{equation} \label{sing}
\Phi^{s,q}(\i)=\mathbb{P}([\i])^q \alpha_1(\i)^{t(\i,q)} \alpha_2(\i)^{s-t(\i,q)}
\end{equation}
 where $\u_1(\i) \leq \alpha_1(\i) \leq \o_1(\i)$ and $\u_2(\i) \leq \alpha_2(\i)\leq \o_2(\i)$. Then there exists an increasing sequence of positive numbers $(C_n)_{n \in \N}$  such that for any $\i_1 \in \I^{n_1}, \ldots , i_k \in \I^{n_k}$:
\begin{enumerate}[(i)]
\item if $s<t_f(q)+t_g(q)$ then 
$$\Phi^{s,q}(\i_1 \ldots \i_k) \leq C_{n_1}\cdots C_{n_k}\Phi^{s,q}(\i_1)\cdots\Phi^{s,q}(\i_k),$$
\item if $s=t_f(q)+t_g(q)$ then 
$$C_{n_1}^{-1}\cdots C_{n_k}^{-1}\Phi^{s,q}(\i_1)\cdots\Phi^{s,q}(\i_k)\leq \Phi^{s,q}(\i_1 \ldots \i_k) \leq C_{n_1}\cdots C_{n_k}\Phi^{s,q}(\i_1)\cdots\Phi^{s,q}(\i_k),$$
\item if $s>t_f(q)+t_g(q)$ then 
$$ \Phi^{s,q}(\i_1 \ldots \i_k)\geq C_{n_1}^{-1}\cdots C_{n_k}^{-1}\Phi^{s,q}(\i_1)\cdots\Phi^{s,q}(\i_k).$$ 
\end{enumerate}
Moreover, $\lim_{n \to \infty} C_n^{\frac{1}{n}}=1$.
\end{lma}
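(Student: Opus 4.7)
\textbf{Proof sketch for Lemma \ref{mult}.} My plan is to reduce $\Phi^{s,q}$ to a clean form depending only on $\mathbb{P}([\i])$, $F(\i) := \sup_{[0,1]}|f_\i'|$ and $G(\i) := \sup_{[0,1]}|g_\i'|$, and then to show that the only genuine non-multiplicativity arises from an \emph{orientation mismatch} between the product word $I := \i_1\cdots\i_k$ and its factors $\i_j$, with sign controlled uniformly by $s - t_f(q) - t_g(q)$.

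First I would use Lemma \ref{tdp} together with A3 (which identifies the two singular directions with the coordinate axes) to show that both $\o_1(\i)$ and $\u_1(\i)$ are comparable to $\max\{F(\i),G(\i)\}$ and both $\o_2(\i)$ and $\u_2(\i)$ are comparable to $\min\{F(\i),G(\i)\}$, with ratios in $[e^{-np_n}, 1]$ for $\i \in \I^n$. Inserting this into \eqref{sing} and absorbing the resulting error (of modulus at most $np_n T(s,q)$ on the log scale, by the definition \eqref{sillyconstant}) yields
\[
\Phi^{s,q}(\i) = E(\i)\, \mathbb{P}([\i])^q\, F(\i)^{A(\i)}\, G(\i)^{B(\i)}, \qquad e^{-np_nT(s,q)} \leq E(\i) \leq e^{np_nT(s,q)},
\]
where $A(\i)+B(\i)=s$ and, according to whether $F(\i)\geq G(\i)$ or $F(\i) < G(\i)$, one has $(A(\i),B(\i)) = (t_f(q), s-t_f(q))$ or $(A(\i),B(\i)) = (s-t_g(q), t_g(q))$; I call these two cases $x$- and $y$-\emph{oriented}. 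Next, the chain rule and Lemma \ref{tdp} give $e^{-\sum_j n_j p_{n_j}} \prod_j F(\i_j) \leq F(I) \leq \prod_j F(\i_j)$, and the same for $G$.

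Combined with the weak quasi-Bernoulli estimate \eqref{wqbp} for $\mathbb{P}$, the two steps above reduce the ratio $\Phi^{s,q}(I)/\prod_j \Phi^{s,q}(\i_j)$, up to a factor of at most $\prod_j c_{n_j}^q \exp(3 n_j p_{n_j} T(s,q))$ on either side, to the combinatorial product
\[
R := \prod_{j=1}^k F(\i_j)^{A(I)-A(\i_j)}\, G(\i_j)^{B(I)-B(\i_j)}.
\]
A four-case analysis on the orientations of $I$ versus each $\i_j$ completes the argument: matching orientations contribute a factor of $1$; when $I$ is $x$-oriented and $\i_j$ is $y$-oriented the $j$-th factor collapses to $(G(\i_j)/F(\i_j))^{s - t_f(q) - t_g(q)}$, and the reverse mismatch gives $(F(\i_j)/G(\i_j))^{s - t_f(q) - t_g(q)}$. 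In both mismatch cases the base is $\geq 1$ by the very definition of orientation, so the $j$-th factor is $\leq 1$ when $s < t_f(q) + t_g(q)$, is $=1$ when $s = t_f(q) + t_g(q)$, and is $\geq 1$ when $s > t_f(q) + t_g(q)$. Setting $C_n := c_n^q \exp(3 n p_n T(s,q))$ and, if necessary, replacing it by $\max_{m \leq n} C_m$ to make it increasing, one obtains $C_n^{1/n} \to 1$ because $c_n^{1/n} \to 1$ and $p_n \to 0$; this gives (i), (ii), (iii) respectively.

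The main obstacle is bookkeeping: one needs the orientation-dependent exponents $(A,B)$ to interact cleanly with the tempered-distortion and quasi-Bernoulli errors so that the mismatch factors emerge with a sign depending \emph{only} on $s - t_f(q) - t_g(q)$ and not on the individual orientations. The key algebraic observation making this possible is the identity $A(\i) + B(\i) = s$ for every $\i$, which forces the off-diagonal factors to take the symmetric form $(F/G)^{\pm(s - t_f(q) - t_g(q))}$ rather than a more complicated expression involving $t_f(q)$ and $t_g(q)$ separately.
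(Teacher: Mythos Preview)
Your proof is correct and follows essentially the same route as the paper: both reduce the ratio $\Phi^{s,q}(\i_1\cdots\i_k)/\prod_j\Phi^{s,q}(\i_j)$ via tempered distortion and the weak quasi-Bernoulli property to a product of ``orientation mismatch'' factors of the form $(\text{ratio} \geq 1)^{s-t_f(q)-t_g(q)}$, from which the three cases follow immediately, and both arrive at $C_n = c_n^q e^{O(np_nT(s,q))}$. The only differences are cosmetic---the paper works with the width/height pair $(w(\i),h(\i))$ and a WLOG on the orientation of the product, whereas you work with $(F,G)$ and a symmetric four-case split, and your constant carries a harmless extra factor of $3$ in the exponent.
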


\begin{proof}
We will prove this for the singular value function $\Phi^{s,q}(\i)=\mathbb{P}([\i])^q \alpha_1(\i)^{t(\i,q)} \alpha_2(\i)^{s-t(\i,q)}$ where $\alpha_1(\i)$ denotes the length of the longer side of $S_\i([0,1]^2)$ and $\alpha_2(\i)$ denotes the length of the shorter side of  $S_\i([0,1]^2)$, noting that these definitions satisfy the assumptions of the lemma. The result for other singular value functions will then follow from \eqref{td1} and \eqref{td2}. 

For $\i \in \I^*$ let $w(\i)$ denote the width of $S_{\i}([0,1]^2)$ and $h(\i)$ denote the height of $S_{\i}([0,1]^2)$. Without loss of generality we can assume that $w(\i_1 \ldots \i_k)>h(\i_1 \ldots \i_k)$, so that $t(\i_1 \ldots \i_k,q)=t_f(q)$.

For $s \leq t_f(q)+t_g(q)$, notice that by Lemma \ref{tdp} and \eqref{qbp}
\begin{eqnarray*}
&\,& \hspace{-1cm} \frac{\Phi^{s,q}(\i_1 \ldots \i_k)}{\Phi^{s,q}(\i_1)\cdots \Phi^{s,q}(\i_k)}\\
& =&\frac{\mathbb{P}([\i_1 \ldots \i_k])^qw(\i_1 \ldots \i_k)^{t_f(q)}h(\i_1 \ldots \i_k)^{s-t_f(q)}}{\mathbb{P}([\i_1])^q\alpha_1(\i_1)^{t(\i_1,q)}\alpha_2(\i_1)^{s-t(\i_1,q)}\cdots \mathbb{P}([\i_k])^q\alpha_1(\i_k)^{t(\i_k,q)}\alpha_2(\i_k)^{s-t(\i_k,q)}}\\
&\leq& c_{n_1}^{q}\cdots c_{n_k}^{q}e^{T(s,q)(n_1p_{n_1}+\cdots+n_kp_{n_k})} \frac{w(\i_1)^{t_f(q)}h(\i_1)^{s-t_f(q)}\cdots w(\i_k)^{t_f(q)}h(\i_k)^{s-t_f(q)}}{\alpha_1(\i_1)^{t(\i_1,q)}\alpha_2(\i_1)^{s-t(\i_1,q)}\cdots \alpha_1(\i_k)^{t(\i_k,q)}\alpha_2(\i_k)^{s-t(\i_k,q)}}\\
&=& c_{n_1}^{q}\cdots c_{n_k}^{q}e^{T(s,q)(n_1p_{n_1}+\cdots+n_kp_{n_k})} \prod_{\substack{1 \leq j \leq k:\\ w(\i_j)<h(\i_j)}} \left(\frac{h(\i_j)}{w(\i_j)}\right)^{s-t_f(q)-t_g(q)}\\
&\leq&  c_{n_1}^{q}\cdots c_{n_k}^{q} e^{T(s,q)(n_1p_{n_1}+\cdots+n_kp_{n_k})}
\end{eqnarray*}
since $s \leq t_f(q)+t_g(q)$. 

Similarly, for $s \geq t_f(q)+t_g(q)$
\begin{eqnarray*}
&\,& \hspace{-1cm}  \frac{\Phi^{s,q}(\i_1 \ldots \i_k)}{\Phi^{s,q}(\i_1)\cdots \Phi^{s,q}(\i_k)}\\
& =&\frac{\mathbb{P}([\i_1 \ldots \i_k])^qw(\i_1 \ldots \i_k)^{t_f(q)}h(\i_1 \ldots \i_k)^{s-t_f(q)}}{\mathbb{P}([\i_1])^q\alpha_1(\i_1)^{t(\i_1,q)}\alpha_2(\i_1)^{s-t(\i_1,q)}\cdots \mathbb{P}([\i_k])^q\alpha_1(\i_k)^{t(\i_k,q)}\alpha_2(\i_k)^{s-t(\i_k,q)}}\\
&\geq&c_{n_1}^{-q}\cdots c_{n_k}^{-q} e^{-T(s,q)(n_1p_{n_1}+\cdots+n_kp_{n_k})} \frac{w(\i_1)^{t_f(q)}h(\i_1)^{s-t_f(q)}\cdots w(\i_k)^{t_f(q)}h(\i_k)^{s-t_f(q)}}{\alpha_1(\i_1)^{t(\i_1,q)}\alpha_2(\i_1)^{s-t(\i_1,q)}\cdots \alpha_1(\i_k)^{t(\i_k,q)}\alpha_2(\i_k)^{s-t(\i_k,q)}}\\
&=&c_{n_1}^{-q}\cdots c_{n_k}^{-q} e^{-T(s,q)(n_1p_{n_1}+\cdots+n_kp_{n_k})}\prod_{\substack{1 \leq j \leq k:\\ w(\i_j)<h(\i_j)}} \left(\frac{h(\i_j)}{w(\i_j)}\right)^{s-t_f(q)-t_g(q)}\\
&\geq & c_{n_1}^{-q}\cdots c_{n_k}^{-q} e^{-T(s,q)(n_1p_{n_1}+\cdots+n_kp_{n_k})}
\end{eqnarray*}
since $s \geq t_f(q)+t_g(q)$. Taking $C_n:=c_n^qe^{T(s,q)np_n}$ gives the result, noting that $\lim_{n \to \infty} C_n^{\frac{1}{n}}=1$ since $p_n \to 0$ and $\lim_{n \to \infty}c_n^{\frac{1}{n}} = 1$.
\end{proof}

\subsection{Pressure: existence and basic properties}

Define the pressure function $P:\mathbb{R} \times [0,\infty)  \to [0,\infty]$ by
$$P(s,q):= \lim_{n \to \infty} \left(\sum_{\i \in \I^n}  \mathbb{P}([\i])^q \o_1(\i)^{t(\i,q)} \o_2(\i)^{s-t(\i,q)}\right)^{\frac{1}{n}}.$$

\begin{lma}
For all $s \in \R$ and $q \geq 0$ the pressure $P(s,q)$ exists.
\end{lma}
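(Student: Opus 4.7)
My plan is to deduce the existence of $P(s,q)$ by mimicking the proof of the analogous pressure existence lemma for parabolic Cantor sets earlier in the paper. The role played there by the weak quasi-Bernoulli constants $c_n$ will here be played by the constants $C_n$ from Lemma~\ref{mult}, which (being the product of a quasi-Bernoulli factor and a tempered distortion factor) satisfy $C_n^{1/n} \to 1$.

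First I would set
\[
\Phi^{s,q}(\i) = \mathbb{P}([\i])^q \o_1(\i)^{t(\i,q)} \o_2(\i)^{s-t(\i,q)}, \qquad a_n = \sum_{\i \in \I^n} \Phi^{s,q}(\i),
\]
observe that $\Phi^{s,q}$ is of the form covered by Lemma~\ref{mult} with $\alpha_j = \o_j$, and note $0 < a_n < \infty$ by A2 and finiteness of $\I$. Each $\i \in \I^{n_1+\cdots+n_k}$ factorises uniquely as a concatenation $\i_1 \cdots \i_k$ with $\i_j \in \I^{n_j}$, so summing the pointwise bounds in Lemma~\ref{mult} yields, when $s \leq t_f(q)+t_g(q)$, the submultiplicative inequality
\[
a_{n_1+\cdots+n_k} \leq C_{n_1}\cdots C_{n_k}\, a_{n_1}\cdots a_{n_k},
\]
and when $s \geq t_f(q)+t_g(q)$, the reverse (supermultiplicative) inequality with each $C_{n_j}$ replaced by $C_{n_j}^{-1}$.

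In the submultiplicative case, given $\varepsilon > 0$ I pick $N$ sufficiently large that $a_N^{1/N} < \liminf_n a_n^{1/n} + \varepsilon$, which is possible with $N$ arbitrarily large. Applying the $k$-fold submultiplicative inequality with all $n_j = N$ gives $a_{Nk}^{1/(Nk)} \leq C_N^{1/N} a_N^{1/N}$, and letting $k \to \infty$ yields
\[
\limsup_{k \to \infty} a_{Nk}^{1/(Nk)} \leq C_N^{1/N}\bigl(\liminf_n a_n^{1/n} + \varepsilon\bigr).
\]
Since $\limsup_n a_n^{1/n} = \limsup_{N \to \infty} a_N^{1/N}$, and since $N$ can additionally be taken so large that $C_N^{1/N} < 1+\varepsilon$, letting $\varepsilon \to 0$ forces $\limsup_n a_n^{1/n} \leq \liminf_n a_n^{1/n}$, and the limit exists. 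The supermultiplicative case is handled symmetrically by exchanging the roles of $\liminf$ and $\limsup$.

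The main (very mild) obstacle is simply the bookkeeping of choosing $N$ to simultaneously witness closeness to the $\liminf$-realising subsequence and $C_N^{1/N} \approx 1$; this is immediate because both properties hold for all sufficiently large $N$. I anticipate no essentially new idea is required beyond the earlier Cantor-set existence proof, the whole argument being a routine adaptation once Lemma~\ref{mult} is in hand.
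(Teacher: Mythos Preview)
Your proposal is correct and follows essentially the same approach as the paper's proof: both reduce the existence of $P(s,q)$ to the weak sub/supermultiplicativity furnished by Lemma~\ref{mult}, pick a large $N$ realising the appropriate $\liminf$/$\limsup$, and use $C_N^{1/N}\to 1$ to force equality of the two. The only cosmetic difference is that you present the submultiplicative case $s\le t_f(q)+t_g(q)$ in detail and defer the supermultiplicative case to symmetry, whereas the paper does the reverse.
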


\begin{proof}
We begin by fixing $s \geq t_f(q)+t_g(q)$ (the case where $s<t_f(q)+t_g(q)$ will be similar) and let $\Phi^{s,q}$ denote the singular value function. Fix $\varepsilon>0$. Let $N \in \N$ be sufficiently large that $\left(\sum_{\i \in \I^N} \Phi^{s,q}(\i)\right)^{\frac{1}{N}}> \limsup_{n \to \infty}\left(\sum_{\i \in \I^n} \Phi^{s,q}(\i)\right)^{\frac{1}{n}}-\varepsilon$, noting that $N$ can be taken arbitrarily large. By Lemma \ref{mult}
\begin{eqnarray*}
\left(\sum_{\i \in \I^{Nk}} \Phi^{s,q}(\i)\right)^{\frac{1}{Nk}} &\geq& C_N^{\frac{1}{N}}\left(\sum_{\i \in \I^N} \Phi^{s,q}(\i)\right)^{\frac{1}{N}} \\
&\geq& C_N^{\frac{1}{N}}\left(\limsup_{n \to \infty}\left(\sum_{\i \in \I^n} \Phi^{s,q}(\i)\right)^{\frac{1}{n}}-\varepsilon\right)
\end{eqnarray*}
recalling that $C_N$ has the property that $C_N^{\frac{1}{N}} \to 1$ as $N \to \infty$.
Hence
$$\liminf_{k \to \infty} \left(\sum_{\i \in \I^{Nk}} \Phi^{s,q}(\i)\right)^{\frac{1}{Nk}} \geq C_N^{\frac{1}{N}}\left(\limsup_{n \to \infty}\left(\sum_{\i \in \I^n} \Phi^{s,q}(\i)\right)^{\frac{1}{n}}-\varepsilon\right).$$
The proof in the case where $s \geq t_f(q)+t_g(q)$ is complete by observing that
$$\liminf_{k \to \infty} \left(\sum_{\i \in \I^{Nk}} \Phi^{s,q}(\i)\right)^{\frac{1}{Nk}} =\liminf_{N \to \infty} \left(\sum_{\i \in \I^N} \Phi^{s,q}(\i)\right)^{\frac{1}{N}}.$$
If $s<t_f(q)+t_g(q)$ the proof is similar except we choose $N$ such that $\left(\sum_{\i \in \I^N} \Phi^{s,q}(\i)\right)^{\frac{1}{N}}< \liminf_{n \to \infty}\left(\sum_{\i \in \I^n} \Phi^{s,q}(\i)\right)^{\frac{1}{n}}+\varepsilon$, and use the ``almost'' submultiplicativity of $\Phi^{s,q}$ instead.
\end{proof}

By the tempered distortion property (in particular \eqref{td1} and \eqref{td2}) we can replace $\o_1(\i)$ and $\o_2(\i)$ by $\u_1(\i)$ and $\u_2(\i)$ (or any intermediate value) without affecting the definition of the pressure, which is easily seen since for all $q \geq 0$ and $s \in \mathbb{R}$:
\begin{align*}&\, e^{-p_nT(s,q)}\left(\sum_{\i \in \I^n} \u_1(\i)^{t(\i,q)} \u_2(\i)^{s-t(\i,q)}\right)^{\frac{1}{n}} \leq \left(\sum_{\i \in \I^n} \o_1(\i)^{t(\i,q)} \o_2(\i)^{s-t(\i,q)}\right)^{\frac{1}{n}}\\
& \hspace{4cm} \leq e^{p_nT(s,q)}  \left(\sum_{\i \in \I^n} \u_1(\i)^{t(\i,q)} \u_2(\i)^{s-t(\i,q)}\right)^{\frac{1}{n}}
\end{align*}
and $p_n \to 0$. So we know that, provided the pressure for one of these singular value functions exists, all of them exist and coincide, i.e. the minimal roots also coincide with $s_0$. 

It is immediate that for a fixed $q \geq 0$, $P(s,q)$ is decreasing and continuous as a function of $s$.  In the hyperbolic setting the next step is to identify the `unique root' of the pressure, that is, a value $s_0$ such that $P(s_0,q) = 1$.  However, as we saw above, this does not have to exist in the parabolic setting, even in the  setting of $\mathbb{R}$.  

\begin{lma}
For any measure on a parabolic carpet,  $P(2,q) \leq 1$ for all $q \geq 0$.
\end{lma}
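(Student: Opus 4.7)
The plan is to bound each summand in the definition of $P(2,q)$ by the planar area of the corresponding rectangle $S_\i([0,1]^2)$ (up to a distortion factor which vanishes in the $n$-th root limit), so that the open set condition from A3 finishes the argument. Write $\alpha_1(\i) := \max\{|f_\i([0,1])|, |g_\i([0,1])|\}$ and $\alpha_2(\i) := \min\{|f_\i([0,1])|, |g_\i([0,1])|\}$ for the longer and shorter sidelengths of $S_\i([0,1]^2)$. By the mean value theorem applied to $f_\i, g_\i$ together with Lemma \ref{tdp}, one has $\alpha_1(\i) \leq \o_1(\i) \leq e^{np_n}\alpha_1(\i)$ and $\o_2(\i) \leq e^{np_n}\alpha_2(\i)$, and since $|t(\i,q)| \leq T := \max\{|t_f(q)|,|t_g(q)|\}$, these combine to give
\[
\o_1(\i)^{t(\i,q)}\o_2(\i)^{2-t(\i,q)} \leq e^{Cnp_n}\,\alpha_1(\i)^{t(\i,q)}\alpha_2(\i)^{2-t(\i,q)}
\]
for some $C = C(q)$, regardless of the sign of $t(\i,q)$.

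The key algebraic observation is that $t(\i,q) \leq 1$ for all $\i$ and all $q \geq 0$. Indeed, $t(\i,q) \in \{t_f(q), t_g(q)\}$ is the $L^q$-spectrum of a probability measure supported on a subset of $[0,1]$, and $\mu(Q)^q \leq 1$ together with $\#\{Q : \mu(Q) > 0\} \leq \lceil 1/\delta\rceil$ forces $\tau(q) \leq 1$. Since $\alpha_2(\i) \leq \alpha_1(\i) \leq 1$,
\[
\alpha_1(\i)^{t(\i,q)}\alpha_2(\i)^{2-t(\i,q)} = \alpha_1(\i)\alpha_2(\i)\cdot\bigl(\alpha_2(\i)/\alpha_1(\i)\bigr)^{1-t(\i,q)} \leq \alpha_1(\i)\alpha_2(\i),
\]
because $1 - t(\i,q) \geq 0$ and the base lies in $(0,1]$. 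This step remains valid even when $t(\i,q)<0$ (which can happen for $q>1$), since only $t(\i,q) \leq 1$ is used.

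Combining these with $\mathbb{P}([\i])^q \leq 1$ and invoking A3, which forces the interiors of $\{S_\i([0,1]^2) : \i \in \I^n\}$ to be pairwise disjoint subsets of $[0,1]^2$, one obtains
\[
\sum_{\i \in \I^n}\mathbb{P}([\i])^q \o_1(\i)^{t(\i,q)}\o_2(\i)^{2-t(\i,q)} \leq e^{Cnp_n}\sum_{\i \in \I^n}\alpha_1(\i)\alpha_2(\i) \leq e^{Cnp_n},
\]
and taking $n$-th roots gives $P(2,q) \leq \lim_n e^{Cp_n} = 1$ since $p_n \to 0$. There is no real obstacle here: the argument is essentially the open set condition plus the a priori bound $\tau(q) \leq 1$ for $L^q$-spectra on the line, with tempered distortion cleaning up the difference between derivative suprema and geometric sidelengths.
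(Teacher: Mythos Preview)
Your proof is correct and follows essentially the same route as the paper's: bound the singular value function by the area of $S_\i([0,1]^2)$ using $t(\i,q)\leq 1$, then invoke the disjointness from A3 and clean up with tempered distortion. The only cosmetic difference is that the paper first reduces to $q=0$ via $\mathbb{P}([\i])^q\leq 1$ and then uses only $t(\i,0)\leq 1$, whereas you keep general $q$ throughout and use the (equally easy) fact $t(\i,q)\leq 1$ for all $q\geq 0$; the paper also applies the algebraic inequality directly to $\o_1,\o_2$ before converting to Lebesgue measure, while you convert to sidelengths first.
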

\begin{proof}
By A3, for all $n$ the sets 
\[
\{ S_\i ((0,1)^2) : \i \in \I^{n}\}
\]
are pairwise disjoint and contained in $(0,1)^2$.  Moreover, for all $\i \in \I^n$, $t(\i,0) \leq 1$ and therefore
\[
\o_1(\i)^{t(\i,0)} \o_2(\i)^{2-t(\i,0)} \leq \o_1(\i) \o_2(\i)  \leq e^{2np_n} \mathcal{L}^2(S_\i ((0,1)^2))
\]
by Lemma \ref{tdp} where $\mathcal{L}^2$ denotes 2-dimensional Lebesgue measure.  Therefore
\begin{align*}
P(2,q) \leq P(2,0) &= \lim_{n \to \infty} \left(\sum_{\i \in \I^n} \o_1(\i)^{t(\i,0)} \o_2(\i)^{2-t(\i,0)}\right)^{\frac{1}{n}}\\
& \leq \lim_{n \to \infty} e^{2p_n} \left(\sum_{\i \in \I^n}  \mathcal{L}^2(S_\i ((0,1)^2))\right)^{\frac{1}{n}}\\ &\leq  \mathcal{L}^2( (0,1)^2) = 1
\end{align*}
as required.
\end{proof}

The previous lemma allows us to define a function $\beta: [0,\infty) \to \mathbb{R}$ by
$$\beta(q):=\inf\{s>0: P(s,q) \leq 1\}.$$

We will eventually show that $\beta(q)$ gives the $L^q$-spectrum of $\mu$ but in order to do that we need to first show that $\beta$ can also be realised as the critical exponent of a certain zeta function.  This is the content of the next subsection.

\subsection{Coincidence of minimal root of pressure and  critical exponent}

For the rest of this section, fix $q \geq 0$.  Write
$$s_0:=\inf\{s>0: P(s,q) \leq 1\}.$$
We will now give criteria under which the critical exponent $\delta_\Phi$ of the zeta function
$$\zeta_\Phi(s,q):= \sum_{n=1}^\infty \sum_{\i \in \I^n} \Phi(\i)$$
also coincides with $s_0$ for any choice of singular value function $\Phi$ that satisfies Lemma \ref{mult}.

By Lemma \ref{existshyperbolic} (see also \eqref{uniform}),  there exists $\i_0 \in \I^{*}$, such that $S_{\i_0}$ is a uniform contraction (hyperbolic) and $\{S_{\j\i_0}: \j \in \I^*\}$ are a set of uniform contractions (hyperbolics) i.e. there exists $\rho<1$ such that
\begin{equation} \label{rho}
\sup_{\j \in \I^*} \o_1(\j\i_0) \leq \rho.
\end{equation}
 Since $\delta_\I= \delta_{\I^{k}}$ and $s_{\I}=s_{\I^{k}}$ for all $k$, without loss of generality we can assume that $k=1$.

Recall that we defined 
\[
\I_\infty\vcentcolon=\{\j\i_0: \j \in (\I \setminus \{\i_0\})^*\} \cup \{\i_0\}.
\]

\begin{prop}\label{crit}
Let $\mu=\mathbb{P}\circ \Pi^{-1}$ be an almost quasi-Bernoulli measure on a parabolic carpet satisfying 
$$\sum_{\i \in \I_\infty} |f_\i([0,1])|^{\alpha_f}<\infty \; \textnormal{ and} \;  \sum_{\i \in \I_\infty} |g_\i([0,1])|^{\alpha_g}<\infty$$
and let $\Phi$ satisfy \eqref{sing}.  Then $\delta_\Phi=s_0$. In particular, this holds for all parabolic carpets where the maps $f_i,g_i$ are $C^{1+\textup{Lip}}$ or $C^2$.
\end{prop}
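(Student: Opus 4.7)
The plan is to follow the four-step strategy used for Proposition \ref{crit2}: writing $\gamma_\J$ for the minimal root of the pressure and $\delta_\J$ for the critical exponent of the zeta function associated with a sub-alphabet $\J \subseteq \I^*$, I aim to show
$$\delta_\I \stackrel{(\text{i})}{=} \delta_{\I_\infty} \stackrel{(\text{ii})}{=} \gamma_{\I_\infty} \stackrel{(\text{iii})}{=} \sup_N \gamma_{\I_N} \stackrel{(\text{iv})}{=} s_0.$$
For step (i) I split $\I^n$ into words ending in $\i_0$, which lie in $\I_\infty^*$, and those that do not, appending $\i_0$ to the latter. Using the chain rule, tempered distortion (Lemma \ref{tdp}) and the weak quasi-Bernoulli property \eqref{wqbp}, one gets $\Phi^{s,q}(\i) \leq D_n \Phi^{s,q}(\i\i_0)$ with $D_n^{1/n} \to 1$. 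Some care is needed because appending $\i_0$ may flip which coordinate of $S_{\i\i_0}([0,1]^2)$ is longer, altering the exponent $t(\cdot, q)$; but since $\o_1(\i_0)$ and $\o_2(\i_0)$ are fixed positive constants, this flip contributes only a uniform multiplicative factor. Step (ii) transfers essentially verbatim from the Cantor case: the uniform contraction \eqref{rho} gives $\alpha_2(\i)^\epsilon \leq \rho^{n\epsilon}$ for $\i \in \I_\infty^n$ and $\epsilon > 0$, so $P_{\I_\infty}(s_0 + \epsilon, q) \leq \rho^\epsilon P_{\I_\infty}(s_0, q) < 1$, forcing the root to be unique.

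For step (iii), the summability hypotheses activate Lemma \ref{bdp} and yield bounded distortion for both $\{f_\i\}_{\i \in \I_\infty}$ and $\{g_\i\}_{\i \in \I_\infty}$. Combined with the quasi-Bernoulli property \eqref{qbp} of $\mathbb{P}$ on $\I_\infty^*$, this upgrades the almost-multiplicativity of Lemma \ref{mult} to a genuine two-sided quasi-multiplicativity $\Phi^{s,q}(\i\j) \approx \Phi^{s,q}(\i)\Phi^{s,q}(\j)$ on $\I_\infty^*$ with constants independent of word length. I then transcribe the Cantor argument: pick $n$ large enough that $\bigl(\sum_{\i \in \I_\infty^n} \Phi^{s,q}(\i)\bigr)^{1/n}$ exceeds $1$ at $s = \gamma_{\I_\infty} - \epsilon$; approximate by $\I_N^n$ for $N$ large; and use quasi-multiplicativity over $k$-fold concatenations to conclude $P_{\I_N}(s, q) > 1$, giving $\gamma_{\I_N} > \gamma_{\I_\infty} - \epsilon$.

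Step (iv) is the main obstacle, because the multiplicativity behaviour of $\Phi^{s,q}$ from Lemma \ref{mult} depends on the sign of $s - t_f(q) - t_g(q)$. When $s_0 \leq t_f(q) + t_g(q)$, Lemma \ref{mult}(i) supplies submultiplicativity and the Cantor-style chain
$$\left(\sum_{\i \in \I_N^{k\ell}} \Phi^{\gamma_\I, q}(\i)\right)^{\frac{1}{Nk\ell}} \leq C_{N\ell}^{1/(N\ell)}\left(\sum_{\i \in \I^{N\ell}} \Phi^{\gamma_\I, q}(\i)\right)^{\frac{1}{N\ell}}$$
yields $P_{\I_N}(\gamma_\I, q) \leq P_\I(\gamma_\I, q) \leq 1$. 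In the opposite regime $s_0 > t_f(q) + t_g(q)$, Lemma \ref{mult}(iii) yields supermultiplicativity and the submultiplicative upper bound on $\sum_{\I^{Nk\ell}} \Phi^{s,q}$ fails outright. To handle this case I would work at $s > s_0$, where $P_\I(s, q) < 1$, and use supermultiplicativity in reverse together with the uniform contraction on $\I_\infty$ to obtain an upper bound on $\bigl(\sum_{\I_N^{k\ell}} \Phi^{s,q}\bigr)^{1/(Nk\ell)}$ in terms of $P_\I(s,q)$ modulated by the $C_n$ factors, then let $s \downarrow s_0$. The persistent technical nuisance throughout is the variable word lengths in $\I_N^{k\ell}$ (each factor $\i_j \in \I_N$ has length anywhere from $1$ to $N$), which I would address via a stopping-time decomposition analogous to $\mathcal{S}^N_\delta$ of the Cantor proof, splitting each long word as $\i = \i_1 \i_2$ with $\i_1$ a stopped concatenation and $|\i_2|$ uniformly bounded so that the tail contributes only a bounded factor.
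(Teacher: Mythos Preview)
Your four-step structure matches the paper exactly, and steps (i) and (ii) are essentially correct. The serious gap is in step (iii).

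The claim that bounded distortion together with the quasi-Bernoulli property \eqref{qbp} on $\I_\infty^*$ yields two-sided quasi-multiplicativity $\Phi^{s,q}(\i\j) \approx \Phi^{s,q}(\i)\Phi^{s,q}(\j)$ with uniform constants is \emph{false}. Bounded distortion and \eqref{qbp} do upgrade the tempered constants $C_n$ of Lemma~\ref{mult} to genuine constants, but this does not touch the intrinsic one-sidedness of that lemma. Re-examine its proof: the obstruction is the aspect-ratio factor
\[
\prod_{j : \, t(\i_j,q) \neq t(\i_1\cdots\i_k,q)} \left(\frac{\alpha_1(\i_j)}{\alpha_2(\i_j)}\right)^{s - t_f(q) - t_g(q)},
\]
arising from factors $\i_j$ whose dominant direction differs from that of the concatenation. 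This is bounded on only one side (above when $s \leq t_f(q)+t_g(q)$, below otherwise), and the aspect ratio $\alpha_1(\i_j)/\alpha_2(\i_j)$ is unbounded over $\I_\infty^*$ regardless of distortion control. Since step (iii) requires a \emph{lower} bound $\Phi^{s,q}(\i_1\cdots\i_k) \gtrsim \prod_j \Phi^{s,q}(\i_j)$ at $s = \gamma_{\I_\infty} - \epsilon$, your argument breaks whenever this $s$ lies below $t_f(q)+t_g(q)$.

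The paper repairs this with a trichotomy: partition $\I_N^n$ into (a) words with $\max_x |f_\i'| < \min_x |g_\i'|$, (b) the reverse, and (c) words with $\alpha_1(\i) \leq C^2 \alpha_2(\i)$. One class $\Gamma$ carries at least a third of the sum, and on $\Gamma^k$ the required lower bound \emph{does} hold: in cases (a) and (b) the exponent $t(\cdot,q)$ is constant along concatenations so the aspect-ratio term disappears, while in case (c) the aspect ratio itself is uniformly bounded so any flip costs only a fixed constant. The pressure argument then runs over $\Gamma^k$ rather than all of $\I_N^{nk}$.

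For step (iv) in the supermultiplicative regime your sketch is too vague to assess; supermultiplicativity naturally produces lower bounds, and it is unclear how ``in reverse together with uniform contraction'' yields the upper bound you claim. The paper instead argues by contradiction: assume $s_{\I_N} > s_\I$, so $(\sum_{\i \in \I_N^k} \Phi^{s_\I,q}(\i))^{1/k} \geq C > 1$ for large $k$; then for each $\i \in \I_N^k$ of $\I$-length $l$, pad on the \emph{left} by $\j\i_0$ with $\j \in \I^{Nk-1-l}$ to obtain distinct words in $\I^{Nk}$. Supermultiplicativity now gives a lower bound on $\sum_{\i \in \I^{Nk}} \Phi^{s_\I,q}(\i)$ that forces $P_\I(s_\I,q) > 1$, a contradiction.
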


Throughout this section the singular value function $\Phi^{s,q}(\i)=\mathbb{P}([\i])^q\alpha_1(\i)^{t(\i,q)}\alpha_2(\i)^{s-t(\i,q)}$ is fixed (where  $\alpha_1(\i)$ denotes the length of the longer side of $S_\i([0,1]^2)$ and $\alpha_2(\i)$ denotes the length of the shorter side of  $S_\i([0,1]^2)$ ) but we will be considering the pressure and zeta functions of several different digit sets. Given a digit set $\J$,  let $s_\J$ denote the minimal root of the pressure $P_\J(s,q)=\lim_{n \to \infty}\left(\sum_{\i \in \J^n} \Phi^{s,q}(\i)\right)^{\frac{1}{n}}$ and $\delta_\J$ denote the critical exponent of the zeta function $\zeta_\Phi(s,q)=\sum_{n=1}^\infty\sum_{\i \in \J^n} \Phi(\i)$. 
Observe that for any finite or countable set $\J \subset \I^*$, we have that $s_\J\leq \delta_\J$.

Denote
\[
\I_N\vcentcolon= \I_\infty \cap \left(\bigcup_{m=1}^N \I^m\right).
\]

Similarly to the proof of Proposition \ref{crit2}, the proof of Proposition \ref{crit} will follow from four lemmas where we will show that:
\[
\mbox{\textbf{(i)}}   \ \ \delta_\I=\delta_{\I_\infty},  \ \ \ \ \ \  
\mbox{\textbf{(ii)}}  \ \ \delta_{\I_\infty}=s_{\I_\infty},  \ \ \ \ \ \ 
\mbox{\textbf{(iii)}} \ \ s_{\I_{\infty}}=\sup_N s_{\I_N}, \ \ \ \ \ \ 
\mbox{\textbf{(iv)}} \ \sup_N s_{\I_N} =s_\I, \ \ \ \ \ \ 
\]
although due to the non-conformality the proofs will be more involved.

Again, in the proofs that follow we'll assume that $\delta_\I<\infty$, but one can easily see that the proofs also imply $s_\I=\infty$ if $\delta_\I=\infty$.

\begin{lma}[Proof of \textbf{(ii)}]\label{ii}
$\delta_{\I_\infty}=s_{\I_\infty}$.
\end{lma}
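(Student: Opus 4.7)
The proof will mirror the structure of Lemma \ref{ii2} in the Cantor setting, with the only extra work being to verify that the geometric contraction available on $\I_\infty$ transfers cleanly to the non-conformal singular value function $\Phi^{s,q}$.

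First I would invoke the root test for $\zeta_\Phi(\cdot,q)$ restricted to $\I_\infty$: the definition of $\delta_{\I_\infty}$ as the critical exponent immediately gives $P_{\I_\infty}(s,q)\geq 1$ for $s<\delta_{\I_\infty}$ and $P_{\I_\infty}(s,q)\leq 1$ for $s>\delta_{\I_\infty}$, so by continuity $P_{\I_\infty}(\delta_{\I_\infty},q)=1$. Hence $\delta_{\I_\infty}$ is \emph{a} root of $P_{\I_\infty}(\cdot,q)$, and consequently $s_{\I_\infty}\leq \delta_{\I_\infty}$.

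The reverse inequality is the content of showing that $P_{\I_\infty}(\cdot,q)$ has a \emph{unique} root. For this, I would use that every $\i\in\I_\infty^n$ satisfies the uniform bound $\alpha_2(\i)\leq \alpha_1(\i)\leq \rho^n$: writing $\i=\i_1\cdots\i_n$ with $\i_j\in\I_\infty$, the chain rule together with \eqref{rho} gives $\|f_\i'\|_\infty\leq \rho^n$ and $\|g_\i'\|_\infty\leq\rho^n$, and both side lengths of $S_\i([0,1]^2)$ are controlled by these quantities (using A2 and the mean value theorem). The crucial observation is then that for any $\epsilon>0$,
\[
\frac{\Phi^{s+\epsilon,q}(\i)}{\Phi^{s,q}(\i)}=\alpha_2(\i)^{\epsilon}\leq \rho^{n\epsilon},
\]
since the exponent of $\alpha_1$ does not depend on $s$ and only the exponent of $\alpha_2$ shifts by $\epsilon$. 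Summing over $\i\in\I_\infty^n$ and taking $n$-th roots gives $P_{\I_\infty}(s+\epsilon,q)\leq \rho^{\epsilon}P_{\I_\infty}(s,q)$, so if $s$ is any root of the pressure then $P_{\I_\infty}(s+\epsilon,q)\leq \rho^\epsilon<1$ for every $\epsilon>0$. This forces the root to be unique, and together with the previous paragraph yields $\delta_{\I_\infty}=s_{\I_\infty}$.

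There is no real obstacle here beyond bookkeeping: the argument is essentially identical to the Cantor case of Lemma \ref{ii2}, the only subtlety being that one must check that the variable exponent $t(\i,q)$ in $\Phi^{s,q}$ cancels out exactly in the ratio $\Phi^{s+\epsilon,q}(\i)/\Phi^{s,q}(\i)$, which indeed it does by construction. The only feature of the parabolic carpet setting used is the uniform contraction property \eqref{rho} on $\I_\infty$; the bounded distortion estimates of Lemma \ref{bdp} (and hence the hypothesis $\sum_{\i\in\I_\infty}|f_\i([0,1])|^{\alpha_f}<\infty$ and its $g$-analogue) are \emph{not} needed for this particular lemma, and will only come into play in Lemma (iii).
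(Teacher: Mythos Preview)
Your proposal is correct and follows essentially the same approach as the paper: invoke the root test to see that $\delta_{\I_\infty}$ is a root of $P_{\I_\infty}(\cdot,q)$, then use the uniform bound $\alpha_2(\i)\leq\o_1(\i)\leq\rho^n$ on $\I_\infty^n$ together with the identity $\Phi^{s+\epsilon,q}(\i)/\Phi^{s,q}(\i)=\alpha_2(\i)^\epsilon$ to deduce $P_{\I_\infty}(s+\epsilon,q)\leq\rho^\epsilon P_{\I_\infty}(s,q)$ and hence uniqueness of the root. Your added remarks about the cancellation of $t(\i,q)$ and the fact that bounded distortion is not used here are accurate and helpful.
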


\begin{proof}
 By the root test,  $P_{\I_\infty}(s,q) \geq 1$, for $s < \delta_{\I_\infty}$, and $P_{\I_\infty}(s,q) \leq 1$, for $s > \delta_{\I_\infty}$. Therefore,  $\delta_{\I_\infty}$ is a root of $P_{\I_\infty}(\cdot, q)$. By \eqref{rho}, $\o_1(\i) \leq \rho^{n}$ for any $\i \in \I_\infty^n$. This implies that $P_{\I_\infty}(\cdot, q)$ has a unique root. To see this, observe
\[
\sum_{\i \in \I_\infty^n} \Phi^{s_{\I_\infty}+\epsilon,q}(\i) \leq \rho^{n\epsilon} \sum_{\i \in \I_\infty^n} \Phi^{s_{\I_\infty},q}(\i).
\]
In particular, $P_{\I_\infty}(s_{\I_\infty}+\epsilon,q) \leq \rho^{\epsilon}\, P_{\I_\infty}(s_{\I_\infty},q)=\rho^\epsilon<1$. Therefore $\delta_{\I_\infty}$ is the unique root of $P_{\I_\infty}$, that is, $s_{\I_\infty}=\delta_{\I_\infty}$ as required.
\end{proof}

\begin{lma}[Proof of  \textbf{(i)}] \label{i}
$\delta_\I=\delta_{\I_\infty}$.
\end{lma}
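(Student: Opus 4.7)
The plan is to mirror the Cantor-case proof of Lemma \ref{i2}, with Lemma \ref{mult} playing the role that the chain rule and weak quasi-Bernoulli bound played there. The inclusion $\I_\infty^* \subseteq \I^*$ gives $\delta_{\I_\infty} \leq \delta_\I$ for free, so the content is in the reverse inequality. Fixing $s > \delta_{\I_\infty}$, I want to show $\zeta_\I(s,q) < \infty$ by splitting
$$\zeta_\I(s,q) = \sum_{n=1}^\infty \sum_{\substack{\i \in \I^n \\ i_n = \i_0}} \Phi^{s,q}(\i) + \sum_{n=1}^\infty \sum_{\substack{\i \in \I^n \\ i_n \neq \i_0}} \Phi^{s,q}(\i).$$
The first sum equals $\zeta_{\I_\infty}(s,q)$ because every word in $\I^n$ that ends in $\i_0$ factors uniquely as an element of $\I_\infty^*$, and this is finite by hypothesis. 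All the work is in bounding the second sum.

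The key technical estimate to prove is $\Phi^{s,q}(\i) \leq D_n \Phi^{s,q}(\i\i_0)$ for a sequence with $D_n^{1/n} \to 1$. With such a bound in hand, the second sum is dominated by $\sum_n D_n \sum_{\i' \in \I_\infty^*,\, |\i'|_\I = n+1} \Phi^{s,q}(\i')$, and the root test then closes the argument exactly as in Lemma \ref{i2}: the $n$-th root of the inner sum is bounded above by $P_{\I_\infty}(s,q) < 1$, which I would extract by combining Lemma \ref{mult}(ii) on $\I_\infty$ with the quasi-Bernoulli property \eqref{qbp} and the bounded distortion of Lemma \ref{bdp}, so as to identify pressure with the growth rate of these partial sums. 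In the regime $s \geq t_f(q)+t_g(q)$ the estimate is immediate from Lemma \ref{mult}(iii) applied with $(\i_1,\i_2) = (\i,\i_0)$, which yields $\Phi^{s,q}(\i\i_0) \geq C_n^{-1}C_1^{-1}\Phi^{s,q}(\i)\Phi^{s,q}(\i_0)$ and hence $D_n := C_nC_1/\Phi^{s,q}(\i_0)$, with $D_n^{1/n} \to 1$ inherited from $C_n^{1/n} \to 1$.

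The principal obstacle is the complementary regime $s < t_f(q)+t_g(q)$, where Lemma \ref{mult}(i) only gives the submultiplicative bound $\Phi^{s,q}(\i\i_0) \leq C_nC_1\Phi^{s,q}(\i)\Phi^{s,q}(\i_0)$, which points the wrong way. Geometrically the issue is that appending $\i_0$ can flip which of the two sides of $S_\i([0,1]^2)$ is the longer one, producing an uncontrolled factor of the form $(h(\i)/w(\i))^{t_f(q)+t_g(q)-s}$ in the ratio $\Phi^{s,q}(\i\i_0)/\Phi^{s,q}(\i)$ that can be made arbitrarily small by choosing $\i$ very elongated in the wrong direction, so no pointwise upper bound on $\Phi^{s,q}(\i)$ in terms of $\Phi^{s,q}(\i\i_0)$ can hold uniformly. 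I expect the resolution to be a more global argument that avoids orbit-by-orbit comparison: partition the sum over $\i \in \I^n$ according to the orientation of $S_\i([0,1]^2)$ and run the pressure analysis separately on each class, so that within a class the direction-dependent exponents in the singular value function are fixed and a clean chain-rule/tempered-distortion estimate applies; alternatively, decompose $\i = \j\k$ with $\j \in \I_\infty^*\cup\{\emptyset\}$ the longest prefix ending in $\i_0$ and $\k \in (\I\setminus\{\i_0\})^*$, apply Lemma \ref{mult}(i), and reduce $\zeta_\I(s,q)$ to a subexponentially weighted convolution of $\zeta_{\I_\infty}(s,q)$ and $\zeta_{\I\setminus\{\i_0\}}(s,q)$. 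Either way, the endpoint is the same: the second sum is dominated by the tail of $\zeta_{\I_\infty}(s,q)$ weighted by a subexponential factor, and finiteness follows from $s > \delta_{\I_\infty}$.
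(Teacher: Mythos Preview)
Your setup is right and matches the paper up through the split of $\zeta_\I$ and the identification of the first sum with $\zeta_{\I_\infty}$. The gap is in the regime $s<t_f(q)+t_g(q)$. Your assertion that ``no pointwise upper bound on $\Phi^{s,q}(\i)$ in terms of $\Phi^{s,q}(\i\i_0)$ can hold uniformly'' is false, and the paper proves exactly this pointwise bound. The point you are missing is geometric: if appending $\i_0$ flips the orientation of $S_\i([0,1]^2)$, then the aspect ratio of $S_\i([0,1]^2)$ was already bounded. Concretely, the paper computes directly (not via Lemma \ref{mult}) that
\[
\frac{\Phi^{s,q}(\i\i_0)}{\Phi^{s,q}(\i)} \gtrsim_{\i_0,q,s} c_n^{-q} e^{-T(s,q)np_n}\left(\frac{\alpha_1(\i)}{\alpha_2(\i)}\right)^{t(\i\i_0,q)-t(\i,q)}.
\]
If the exponent is nonnegative we are done since $\alpha_1\geq\alpha_2$. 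If the exponent is negative then the orientation has flipped, and in that case one has $\u_1(\i)\u_2(\i_0)\leq \o_2(\i)\o_1(\i_0)$, which together with tempered distortion gives $\alpha_1(\i)/\alpha_2(\i)\leq e^{2np_n}\,\o_1(\i_0)/\u_2(\i_0)$. So the ``uncontrolled'' factor is in fact $\gtrsim e^{-2np_n|t_f(q)-t_g(q)|}(\u_2(\i_0)/\o_1(\i_0))^{|t_f(q)-t_g(q)|}$, which is a constant times a subexponential in $n$. This yields $D_n$ with $D_n^{1/n}\to 1$ exactly as in the Cantor case, after which your root-test argument goes through unchanged.

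Your proposed workarounds do not close the gap on their own. Partitioning by the orientation of $S_\i$ does not prevent $\i\i_0$ from landing in the other class, so the flip factor reappears. The convolution idea fails outright: for $q=0$ and any parabolic index $i\neq\i_0$, the terms $\Phi^{s,0}(i^n)$ decay only subexponentially, so $\zeta_{\I\setminus\{\i_0\}}(s,0)=\infty$ for every $s$, and the bound $\zeta_\I\lesssim \zeta_{\I_\infty}\cdot\zeta_{\I\setminus\{\i_0\}}$ is vacuous. The actual fix is the one-line aspect-ratio observation above.
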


\begin{proof}
 Since $\I_\infty^* \subseteq \I^*$, we have $\delta_{\I_\infty} \leq \delta_\I$. We claim that there exists a sequence $D_n$ such that $\lim_{n \to \infty} D_n^{\frac{1}{n}}=1$ and $\Phi^{s,q}(\i) \leq D_n \Phi^{s,q}(\i\i_0)$ for all $\i\in \I^n$. Indeed, for all $\i \in \I^n$,
\[
\frac{\mathbb{P}([\i\i_0])^q\alpha_1(\i\i_0)^{t(\i\i_0,q)}\alpha_2(\i\i_0)^{s-t(\i\i_0,q)}}{\mathbb{P}([\i ])^q\alpha_1(\i)^{t(\i,q)}\alpha_2(\i)^{s-t(\i,q)}} \gtrsim_{\i_0, q, s} c_n^{-q}
 e^{-T(s,q)np_n}  \left(\frac{\alpha_1(\i)}{\alpha_2(\i)}\right)^{t(\i\i_0,q)-t(\i,q)}
\]
where $T(s,q) \geq 0$ is the constant from \eqref{sillyconstant}.  For this we need to use  \eqref{wqbp}, Lemma \ref{tdp}, super- and submultiplicativity of $\o_1$ and $\u_2$ respectively and the fact that $\alpha_1(\j\k)\geq \u_1(\j)\u_2(\k)$ and $\alpha_2(\j\k)\leq \o_1(\j)\o_2(\k)$  for all  words $\j,\k \in \I^*$.  Precisely which estimates we apply depend on the signs of $s-t( \cdot, q)$ and $t( \cdot, q)$.  

If $t(\i\i_0,q)-t(\i,q) \geq 0$ then we are done. Otherwise, observe that 
$$\u_1(\i)\u_2(\i_0) \leq\o_2(\i)\o_1(\i_0).$$
Hence
\begin{align*}
 \left(\frac{\alpha_2(\i)}{\alpha_1(\i)}\right)^{t(\i,q)-t(\i\i_0,q)} \geq  \left(\frac{e^{-np_n}\o_2(\i)}{e^{np_n}\u_1(\i)}\right)^{t(\i,q)-t(\i\i_0,q)} &\geq  \left(e^{-2np_n}\frac{\u_2(\i_0)}{\o_1(\i_0)}\right)^{t(\i,q)-t(\i\i_0,q)}\\ &= \left(e^{-2np_n}\frac{\u_2(\i_0)}{\o_1(\i_0)}\right)^{|t_f(q)-t_g(q)|}
\end{align*}
which proves the claim.

Therefore 
\begin{align}
\zeta_\I(s,q) &= \sum_{n=1}^{\infty} \sum_{\substack{\i \in \I^n \\ i_n=\i_0}} \Phi^{s,q}(\i)+\sum_{n=1}^{\infty} \sum_{\substack{\i \in \I^n \\ i_n\neq \i_0}} \Phi^{s,q}(\i) \nonumber\\
&\leq \zeta_{\I_\infty}(s,q) +    \sum_{n=1}^{\infty} D_n\sum_{\substack{\i \in \I^n \\ i_n\neq \i_0}} \Phi^{s,q}(\i\i_0). \label{zeta ineq}
\end{align}
If $s> \delta_{\I_\infty}$ then $ \zeta_{\I_\infty}(s,q)<\infty$. Moreover, as seen in the proof of Lemma \ref{ii}, 
$$ \lim_{n \to \infty}\left(\sum_{\substack{\i \in \I^n \\ i_n\neq \i_0}} \Phi^{s,q}(\i\i_0)\right)^{\frac{1}{n}} \leq P_{\I_\infty}(s,q)<1$$
and since $\lim_{n \to\infty} D_n^{\frac{1}{n}}=1$ we have that the right hand side of \eqref{zeta ineq} is summable for $s>\delta_{\I_\infty}$.  In particular $\delta_\I \leq \delta_{\I_\infty}$.
\end{proof}

\begin{lma}[Proof of  \textbf{(iii)}] \label{iii}
$s_{\I_{\infty}}=\sup_N s_{\I_N}.$
\end{lma}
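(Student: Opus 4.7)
The plan is to mirror the proof of Lemma \ref{iii2} in the Cantor setting, now using Lemma \ref{mult} in place of the one-dimensional bounded distortion argument.

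The inequality $\sup_N s_{\I_N} \leq s_{\I_\infty}$ is immediate from $\I_N \subseteq \I_\infty$ (so $\zeta_{\I_N} \leq \zeta_{\I_\infty}$ term by term). For the reverse, fix $\epsilon > 0$ and set $s = s_{\I_\infty} - \epsilon$. Since $\I_\infty$ consists of uniform contractions by \eqref{rho}, the argument in the proof of Lemma \ref{ii} shows that $s_{\I_\infty}$ is the unique root of $P_{\I_\infty}(\cdot, q)$, so $P_{\I_\infty}(s,q) > 1$. I pick $\lambda \in (1, P_{\I_\infty}(s,q))$, choose $n$ large so that $\big(\sum_{\i \in \I_\infty^n} \Phi^{s,q}(\i)\big)^{1/n}$ exceeds $\lambda$ with enough slack to absorb multiplicative constants appearing later, and then, with $n$ held fixed, use monotone convergence in $N$ to choose $N$ large enough that the same inequality holds with $\I_N$ in place of $\I_\infty$.

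The heart of the argument is to establish an almost supermultiplicative estimate on $\I_N^* \subset \I_\infty^*$: for any $\i_1, \ldots, \i_k \in \I_N^n$,
\[
\Phi^{s,q}(\i_1 \cdots \i_k) \geq K_n^{-k} \prod_{j=1}^{k} \Phi^{s,q}(\i_j),
\]
where $K_n$ depends on $s, q, n$ but is controlled well enough that $K_n^{-1/n} \lambda > 1$ for $n$ sufficiently large. The ingredients are Lemma \ref{mult} together with the bounded distortion on $\I_\infty$ (Lemma \ref{bdp}) and the quasi-Bernoulli property \eqref{qbp} on $\I_\infty^*$, which together upgrade the sequence $(C_n)$ of Lemma \ref{mult} to uniform constants on this induced alphabet. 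Iterating and letting $k \to \infty$ then yields
\[
P_{\I_N}(s,q) \geq K_n^{-1/n}\big(\sum_{\i \in \I_N^n}\Phi^{s,q}(\i)\big)^{1/n} > 1,
\]
so $s_{\I_N} > s = s_{\I_\infty} - \epsilon$ as required.

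The main obstacle is the regime $s < t_f(q) + t_g(q)$: here Lemma \ref{mult}(i) gives submultiplicativity rather than supermultiplicativity, because the correction factor $\prod_j (h(\i_j)/w(\i_j))^{s - t_f(q) - t_g(q)}$ appearing in that proof points in the wrong direction. Handling this case will require a refinement that exploits the finiteness of $\I_N$ to control the aspect ratios of $S_\i([0,1]^2)$ for $\i \in \I_N^n$ (which lie in a range $[\alpha^n, \beta^n]$ depending only on $\I_N$ and $n$), combined with bounded distortion on $\I_\infty$, so that the offending factor can be bounded below and absorbed into $K_n$ without destroying the margin $P_{\I_\infty}(s,q) > 1$. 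This case-analysis based on the sign of $s - t_f(q) - t_g(q)$ is the principal technical difference from the conformal proof of Lemma \ref{iii2}.
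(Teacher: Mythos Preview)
Your overall strategy mirrors the paper's: fix $s = s_{\I_\infty} - \epsilon$, choose $\lambda \in (1, P_{\I_\infty}(s,q))$, then $n$ large, then $N$ large, and establish supermultiplicativity on a suitable subset with a constant whose $n$th root is close to $1$. The easy direction and the setup are fine.

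The gap is in your treatment of the case $s < t_f(q) + t_g(q)$. Your proposed fix --- bounding the aspect ratios of $S_\i([0,1]^2)$ for $\i \in \I_N^n$ by something like $\beta^n$ with $\beta = \beta(N)$ --- does not yield a usable $K_n$. Tracing through the proof of Lemma~\ref{mult}, the correction term for $k$ factors $\i_1, \dots, \i_k \in \I_N^n$ is $\prod_{j} (\alpha_1(\i_j)/\alpha_2(\i_j))^{t(\i_1\cdots\i_k,q)-t(\i_j,q)}$, and with your bound this is at worst $\beta^{nk|t_f(q)-t_g(q)|}$, so $K_n^{1/n} \geq \beta^{|t_f(q)-t_g(q)|}$. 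This is a constant strictly greater than $1$ that does \emph{not} tend to $1$ as $n \to \infty$; moreover $\beta$ depends on $N$, which is chosen after $n$, so you cannot arrange for enough slack in advance. The margin $P_{\I_\infty}(s,q) > 1$ is therefore destroyed.

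The paper's resolution is a different idea: rather than seeking supermultiplicativity on all of $\I_N^n$, it partitions $\I_N^n$ into three categories according to whether (a) $\max|f_\i'| < \min|g_\i'|$, (b) $\max|g_\i'| < \min|f_\i'|$, or (c) the two are comparable up to the bounded distortion constant $C^2$. One category $\Gamma$ carries at least one third of $\sum_{\i \in \I_N^n} \Phi^{s,q}(\i)$. On $\Gamma$, supermultiplicativity holds with a \emph{uniform} constant $C_* = c^q C^{2T(s,q)+2|t_f(q)-t_g(q)|}$ independent of $n$ and $N$: in categories (a) and (b) the orientation $t(\i_j,q)$ is constant along the product so the correction factor is $1$, and in category (c) each aspect ratio is at most $C^2$ so the correction is bounded by $C^{2|t_f(q)-t_g(q)|}$ per factor. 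The factor $1/3$ from restricting to $\Gamma$ is harmless since $(3C_*)^{-1/n} \to 1$. This category trick, not an aspect-ratio bound over $\I_N^n$, is the missing ingredient.
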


\begin{proof}
Since $\I_N \subseteq \I_\infty$, clearly $\sup_N s_{\I_N} \leq s_{\I_\infty}$. Therefore it is sufficient to show that for all $\epsilon>0$ there exists $N \in \N$ such that $s_{\I_N}>s_{\I_\infty}-\epsilon$. Recall that the family $\{S_\i\}_{\i \in \I_\infty}$ satisfies the bounded distortion property \eqref{bdp} (in both coordinates) and $\mathbb{P}$ is quasi-Bernoulli on $\I_\infty^*$ in the sense of \eqref{qbp}, both of which will be crucial for this proof. 

 Fix $\epsilon>0$ and write $s=s_{\I_\infty}-\epsilon$. Let $1<\lambda<P_{\I_\infty}(s_{\I_\infty}-\epsilon, q)$ and choose $n$ sufficiently large that 
$$\left(\sum_{\i \in \I_\infty^n} \Phi^{s,q}(\i)\right)^{\frac{1}{n}}>\frac{\lambda+P_{\I_\infty}(s_{\I_\infty}-\epsilon, q)}{2}$$ and $$\left(\frac{1}{3c^qC^{2T(s,q)+2|t_f(q)-t_g(q)|}}\right)^{\frac{1}{n}}>\frac{1}{\lambda}$$
Since $\lim_{N \to \infty} \left(\sum_{\i \in \I_N^n}\Phi^{s,q}(\i)\right)^{\frac{1}{n}}=\left(\sum_{\i \in \I_\infty^n}\Phi^{s,q}(\i)\right)^{\frac{1}{n}}>\frac{\lambda+P_{\I_\infty}(s_{\I_\infty}-\epsilon,q)}{2}>\lambda,$ we can choose $N$ sufficiently large that
$$ \left(\sum_{\i \in \I_N^n}\Phi^{s,q}(\i)\right)^{\frac{1}{n}}>\lambda.$$
For each $\i \in \I_N^n$ either:
\begin{enumerate}[(a)]
\item $\max_x|f'_\i(x)|<\min_x|g'_\i(x)|$, or
\item $\max_x|g'_\i(x)|<\min_x|f'_\i(x)|$ or
\item $\alpha_2(\i) \leq \alpha_1(\i) \leq C^2\alpha_2(\i)$. (i.e. in this category we either have $\min_x|f'_\i(x)|<\max_x|g'_\i(x)|<\max_x|f'_\i(x)|$ or $\min_x|g'_\i(x)|<\max_x|f'_\i(x)|<\max_x|g'_\i(x)|$, then apply bounded distortion property \eqref{bdp} potentially twice).
\end{enumerate}
  Hence we can choose $\Gamma \subset \I_N^n$ such that
$$\sum_{\i\in \Gamma} \Phi^{s,q}(\i) \geq \frac{1}{3}\sum_{\i \in \I_N^n}\Phi^{s,q}(\i)$$
and all $\i \in \Gamma$ belong to the same category (a), (b) or (c). We claim that if $\i_1, \ldots, \i_k \in \Gamma$, then $\Phi^{s,q}(\i_1 \ldots \i_k) \geq C_*^{-k}\Phi^{s,q}(\i_1) \cdots \Phi^{s,q}(\i_k)$ where $C_*:=c^qC^{2T(s,q)+2|t_f(q)-t_g(q)|}$. To see this, first assume all $\i \in \Gamma$ belong to either category (a) or (b). Then since $\o_1$ and $\o_2$ are supermultiplicative  and $\u_1$ and $\u_2$ are submultiplicative on $\Gamma$ we can bound below
\begin{eqnarray*}
&\,& \hspace{-2cm} \frac{\mathbb{P}([\i_1 \ldots \i_k])^q\alpha_1(\i_1 \ldots \i_k)^{t(\i_1 \ldots \i_k,q)}\alpha_2(\i_1 \ldots \i_k)^{s-t(\i_1 \ldots \i_k,q)}}{\mathbb{P}([\i_1  ])^q\alpha_1(\i_1)^{t(\i_1,q)}\alpha_2(\i_1)^{s-t(\i_1,q)} \cdots \mathbb{P}([ \i_k])^q\alpha_1(\i_k)^{t(\i_k,q)}\alpha_2(\i_k)^{s-t(\i_k,q)}} \\
&\geq& c^{-kq}C^{-kT(s,q)} \prod_{k=1}^k\frac{\alpha_1(\i_j)^{t(\i_1 \ldots \i_k,q)}\alpha_2(\i_j)^{s-t(\i_1 \ldots \i_k,q)}}{\alpha_1(\i_j)^{t(\i_j,q)}\alpha_2(\i_j)^{s-t(\i_j,q)}}.
\end{eqnarray*}
We are done since $t(\i_1 \ldots \i_k,q)=t(\i_j,q)$ for all $1 \leq j \leq k$.

On the other hand, if all $\i \in \Gamma$ belong to category (c) then 
\begin{align*}
&\, \hspace{-2cm}\frac{\mathbb{P}([\i_1 \ldots \i_k])^q\alpha_1(\i_1 \ldots \i_k)^{t(\i_1 \ldots \i_k,q)}\alpha_2(\i_1 \ldots \i_k)^{s-t(\i_1 \ldots \i_k,q)}}{\mathbb{P}([\i_1 ])^q\alpha_1(\i_1)^{t(\i_1,q)}\alpha_2(\i_1)^{s-t(\i_1,q)} \cdots \mathbb{P}([  \i_k])^q \alpha_1(\i_k)^{t(\i_k,q)}\alpha_2(\i_k)^{s-t(\i_k,q)}}\\
& \geq  c^{-kq} \prod_{j=1}^k\frac{\u_2(\i_j)^{t(\i_1 \ldots \i_k,q)}\u_2(\i_j)^{s-t(\i_1 \ldots \i_k,q)}}{\alpha_1(\i_j)^{t(\i_j,q)}\alpha_2(\i_j)^{s-t(\i_j,q)}}\\
&\geq c^{-kq}C^{-kT(s,q)} \prod_{j=1}^k\frac{\alpha_2(\i_j)^{t(\i_1 \ldots \i_k,q)}\alpha_2(\i_j)^{s-t(\i_1 \ldots \i_k,q)}}{\alpha_1(\i_j)^{t(\i_j,q)}\alpha_2(\i_j)^{s-t(\i_j,q)}}\\
&\geq c^{-kq}C^{-k(T(s,q)+2\max\{|t_f(q)|,|t_g(q)|\})}\prod_{j=1}^k\frac{\alpha_1(\i_j)^{t(\i_1 \ldots \i_k,q)}\alpha_2(\i_j)^{s-t(\i_1 \ldots \i_k,q)}}{\alpha_1(\i_j)^{t(\i_j,q)}\alpha_2(\i_j)^{s-t(\i_j,q)}} \\
&\geq c^{-kq}C^{-2kT(s,q)}\prod_{j=1}^k\left(\frac{\alpha_1(\i_j)}{\alpha_2(\i_j)}\right)^{t(\i_1 \ldots \i_k,q)-t(\i_j,q)}.
\end{align*}
For each $j$ such that $t(\i_1 \ldots \i_k,q)-t(\i_j,q) \geq 0$ the $j$th term in the product above is $\geq 1$.  On the other hand,  if $t(\i_1 \ldots \i_k,q)-t(\i_j,q) < 0$ then, since $\alpha_2(\i) \geq C^{-2}\alpha_1(\i)$, we have
$$\left(\frac{\alpha_2(\i_j)}{\alpha_1(\i_j)}\right)^{t(\i_j,q)-t(\i_1 \ldots \i_k,q)} \geq C^{-2|t_f(q)-t_g(q)|}$$
which completes the proof of the claim.

 In particular
\begin{eqnarray*}
P_{\I_N}(s,q)=\lim_{k \to \infty}\left(\sum_{\i \in \I_N^{nk}} \Phi^{s,q}(\i)\right)^{\frac{1}{nk}}&\geq& \lim_{k \to \infty}\left(\sum_{\i \in \Gamma^{k}} \Phi^{s,q}(\i)\right)^{\frac{1}{nk}} \\
&\geq&  \left(\sum_{\i \in \Gamma}C_*^{-1} \Phi^{s,q}(\i)\right)^{\frac{1}{n}}\\
&\geq&\left(\frac{1}{3C_*}\right)^{\frac{1}{n}} \left(\sum_{\i \in \I_N^n}\Phi^{s,q}(\i)\right)^{\frac{1}{n}}>\frac{1}{\lambda} \cdot \lambda=1.
\end{eqnarray*}
In particular $s_{\I_N}>s=s_{\I_\infty}-\epsilon$, completing the proof of \textbf{(iii)}. \end{proof}

\begin{lma}[Proof of \textbf{(iv)}]
$\sup_N s_{\I_N}=s_\I$.
\end{lma}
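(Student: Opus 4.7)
The plan is to show $s_{\I_N} \leq s_\I$ for every $N$, since the reverse inequality $\sup_N s_{\I_N} \geq s_\I$ is already a consequence of \textbf{(i)}--\textbf{(iii)}: namely $\sup_N s_{\I_N} = s_{\I_\infty} = \delta_{\I_\infty} = \delta_\I \geq s_\I$. Fixing $N$, it suffices to verify $P_{\I_N}(s_\I, q) \leq 1$.

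The key step is a purely combinatorial observation about the subsystem $\I_N$. Viewing each element of $\I_N^k$ as the concatenation $\i_1 \cdots \i_k$ of $k$ words from $\I_N$, the resulting word lies in $\I^n$ for some $n \in [k, Nk]$, and distinct tuples produce distinct concatenated words. This uniqueness of factorisation uses that $\i_0 \in \I$ is a single letter (WLOG) and that each element of $\I_\infty$ has $\i_0$ only at its final position, so the positions of $\i_0$ in the concatenated word recover the factorisation. Consequently,
\[
\sum_{\i \in \I_N^k} \Phi^{s,q}(\i) \;\leq\; \sum_{n=k}^{Nk} \sum_{\i \in \I^n} \Phi^{s,q}(\i).
\]

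Given $\epsilon > 0$, continuity of the pressure in $s$ together with the definition of $s_\I$ give $P_\I(s_\I, q) \leq 1$, so $\sum_{\i \in \I^n} \Phi^{s_\I, q}(\i) \leq (1+\epsilon)^n$ for all sufficiently large $n$. Substituting and summing the finite geometric series,
\[
\sum_{\i \in \I_N^k} \Phi^{s_\I, q}(\i) \;\leq\; Nk\,(1+\epsilon)^{Nk}
\]
for large $k$. Taking $k$-th roots and letting $k \to \infty$ then gives $P_{\I_N}(s_\I, q) \leq (1+\epsilon)^N$, and arbitrariness of $\epsilon$ delivers $P_{\I_N}(s_\I, q) \leq 1$.

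I expect the main obstacle to be conceptual rather than technical: recognising that this direct approach avoids the sign-of-$s - t_f(q) - t_g(q)$ case analysis that Lemma \ref{mult} would otherwise force (this is the complication that makes the Cantor-case proof of \textbf{(iv)} split into $\gamma_\I \geq 0$ and $\gamma_\I < 0$, with a swap from $\sup$ to $\inf$ in the singular value function). The crucial quantitative feature is that $N$ is fixed while $k \to \infty$, so the factor $(1+\epsilon)^N$ stays bounded and the $\epsilon \to 0$ limit closes the argument; this is what lets one use the word-level inclusion above without having to track scale-dependent almost-submultiplicativity constants. A minor technical remark is that although $P_{\I_N}$ is defined as a limit, the argument only bounds the $\limsup$, which is already enough.
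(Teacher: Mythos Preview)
Your argument is correct and takes a genuinely different route from the paper. The paper splits into two cases according to whether $s_\I \leq t_f(q)+t_g(q)$ or $s_\I > t_f(q)+t_g(q)$, invoking the almost-submultiplicativity (respectively almost-supermultiplicativity) of the singular value function from Lemma~\ref{mult}; the supermultiplicative case in particular requires a fairly delicate contradiction argument with several auxiliary parameters. Your approach sidesteps all of this: the unique-factorisation observation gives the clean inclusion $\I_N^k \hookrightarrow \bigcup_{n=k}^{Nk}\I^n$ at the level of words, and then only the \emph{definition} of $P_\I(s_\I,q)$ as a limit (hence $\sum_{\i\in\I^n}\Phi^{s_\I,q}(\i)\leq (1+\epsilon)^n$ eventually) is needed. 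No property of $\Phi$ beyond being a function on $\I^*$ is used, so in particular no case split on the sign of $s-t_f(q)-t_g(q)$ arises. What the paper's approach buys is that it stays within the almost-(sub/super)multiplicative framework used elsewhere; what yours buys is brevity and a proof that works verbatim for \emph{any} potential, conformal or not.
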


\begin{proof}
Note that since $s_\I \leq \delta_\I= \sup_N s_{\I_N}$ by the conditions \textbf{(i-iii)} above, it is sufficient to prove that $\sup_N s_{\I_N} \leq s_\I$. We first prove this under the assumption that $s_\I \leq t_f(q)+t_g(q)$. Let $N \in \N$. Then by almost submultiplicativity of the singular value function,
\begin{align*}
\left(\sum_{\i \in \I_N^{k\ell}} \Phi^{s_\I,q}(\i)\right)^{\frac{1}{Nk\ell}}& \leq \left(\sum_{\i \in \I^{Nk\ell}} \Phi^{s_\I,q}(\i)\right)^{\frac{1}{Nk\ell}}\\
&\leq \left(\left(\sum_{\i \in \I^{N\ell}} \Phi^{s_\I,q}(\i)\right)^kC_{N\ell}^k\right)^{\frac{1}{Nk\ell}}\\
&= C_{N\ell}^{\frac{1}{N\ell}}\left(\sum_{\i \in \I^{N\ell}} \Phi^{s_\I,q}(\i)\right)^{\frac{1}{N\ell}}.
\end{align*}
Hence
$$P_{\I_N}(s,q)^{\frac{1}{N}}=\lim_{\ell \to \infty}\left(\sum_{\i \in \I_N^{k\ell}} \Phi^{s_\I,q}(\i)\right)^{\frac{1}{Nk\ell}}\leq \lim_{\ell \to \infty}\left(\sum_{\i \in \I^{N\ell}} \Phi^{s_\I,q}(\i)\right)^{\frac{1}{N\ell}}=P_\I(s_\I,q)=1.$$
In particular $P_{\I_n}(s_\I) \leq 1$ hence $s_{\I_N} \leq s$, completing the proof in the case where $s_\I \leq t_f(q)+t_g(q)$.

Next, we prove that  $\sup_N s_{\I_N} \leq s_\I$ in the case where $s_\I > t_f(q)+t_g(q)$, so the singular value function is only almost supermultiplicative (this is the harder case). For a contradiction we assume that $s_{\I} < s_{\I_N}$ for some $N \in \N$. Then we can choose $C>1$ and $n_0 \in \N$ such that 
\begin{equation}\label{C}
\left(\sum_{\i \in \I_N^n} \Phi^{s_\I,q}(\i)\right)^{\frac{1}{n}} \geq C,
\end{equation}
for all $n \geq n_0$. Choose $\epsilon>0$ sufficiently small such that $C^{\frac{1}{N}}(1-\epsilon) >1$. Note that there exists $M \in \N$ such that for all $m \geq M$, 
\begin{equation}\label{eps}
\left(\sum_{\i \in \I^m} \Phi^{s_\I,q}(\i) \right)^{\frac{1}{m}} \geq 1-\epsilon. 
\end{equation}
Given $\i \in \I_N^k$, write $|\i|=l$ if $\i\in\I^l$ and note that for any $\i \in \I_N^k$, $k\leq |\i| \leq Nk$. Now, observe that for any $k \in \N$,
\begin{align}\label{rewrite}
\sum_{\i \in \I^{Nk}}\Phi^{s_\I,q}(\i) \geq &\sum_{\substack{\i \in \I_N^k\\ |\i|=Nk}} \Phi^{s_\I,q}(\i)+ \sum_{\substack{\i \in \I_N^k\\ |\i|=Nk-1}}\Phi^{s_\I,q}(\i_0\i) \,+ \nonumber\\
&+  \sum_{\substack{\i \in \I_N^k\\ k \leq |\i|\leq Nk-2}} \sum_{\j \in \I^{Nk-1-|\i|}} \Phi^{s_\I,q}(\j\i_0\i). 
\end{align}

Put $c'= \min_{1 \leq m \leq M-1} \sum_{\i \in \I^m} \Phi^{s_\I,q}(\i),$ and observe that by \eqref{eps} we have that for any $l \leq Nk-2$,
\[
\sum_{\i \in \I^{Nk-1-l}}  \Phi^{s_\I,q}(\i) \geq \min \{c', (1-\epsilon)^{Nk-1-l}\} \geq \min\{c', (1-\epsilon)^{Nk}\}=(1-\epsilon)^{Nk},\]
whenever $k \geq k_0$ for some sufficiently large $k_0$. Then, from (\ref{rewrite}) for any  $k \geq k_0$ 
\begin{align*}
\sum_{\i \in \I^{Nk}}  \Phi^{s_\I,q}(\i) &\geq \sum_{\substack{\i \in \I_N^k\\ |\i|=Nk}}  \Phi^{s_\I,q}(\i)+ C_{Nk-1}^{-1}C_1^{-1} \Phi^{s_\I,q}(\i_0)\left(\sum_{\substack{\i \in \I_N^k\\ |\i|=Nk-1}}  \Phi^{s_\I,q}(\i)\right) +\\
& + \Phi^{s_\I,q}(\i_0) \sum_{\substack{\i \in \I_N^k\\ k \leq |\i|\leq Nk-2}}C_{|\i|}^{-1}C_1^{-1}C_{Nk-1-|\i|}^{-1} \Phi^{s_\I,q}(\i)\left(\sum_{\j \in \I^{Nk-1-|\i|}} \Phi^{s_\I,q}(\j)\right)\\
&\geq  \Phi^{s_\I,q}(\i_0)C_1^{-1}C_{Nk}^{-2}(1-\epsilon)^{Nk}\sum_{\i \in \I_N^k}  \Phi^{s_\I,q}(\i).
\end{align*}
 In particular by (\ref{C}), for any $k \geq \min\{n_0, k_0\}$,
\begin{align*}
\left(\sum_{\i \in \I^{Nk}}  \Phi^{s_\I,q}(\i) \right)^{\frac{1}{Nk}} &\geq (C_1C_{Nk}^2 )^{-\frac{1}{Nk}} (\Phi^{s_\I,q}(\i_0))^{\frac{1}{Nk}}(1-\epsilon)\left(\sum_{\i \in \I_N^k}  \Phi^{s_\I,q}(\i)\right)^{\frac{1}{Nk}}\\
&\geq C^{\frac{1}{N}} (C_1C_{Nk}^2 )^{-\frac{1}{Nk}} (1-\epsilon)\left(\Phi^{s_\I,q}(\i_0) \right)^{\frac{1}{Nk}}.
\end{align*}
In particular, $P_{\I}(s_{\I},q)=\lim_{k \to \infty}\left(\sum_{\i \in \I^{Nk}}  \Phi^{s_\I,q}(\i) \right)^{\frac{1}{Nk}}\geq  C^{\frac{1}{N}} (1-\epsilon)  >1$, giving a contradiction. Thus the proof of \textbf{(iv)} is complete. \end{proof}

\section{$L^q$-spectrum of almost quasi-Bernoulli measures on parabolic carpets} \label{proof-carpet}

The following theorem constitutes our main result.

\begin{thm} \label{carpetthm}
If  $\mu$ is an almost quasi-Bernoulli measure on a parabolic carpet $F$ for which
$$\sum_{\i \in \I_\infty} |f_\i([0,1])|^{\alpha_f}<\infty \; \textnormal{ and} \;  \sum_{\i \in \I_\infty} |g_\i([0,1])|^{\alpha_g}<\infty$$
    then 
$$\tau_\mu(q) = \beta(q)$$
for all $q \geq 0$.  Moreover, $\bd F = \pd F = \beta(0)$.   In particular, these results  hold for all parabolic carpets where the maps $f_i,g_i$ are $C^{1+\textup{Lip}}$ or $C^2$.
\end{thm}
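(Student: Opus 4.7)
My plan is to follow the proof of Theorem \ref{cantorthm} closely, carrying the Cantor-set arguments over to the carpet setting with the singular value function $\Phi^{s,q}$ and Proposition \ref{crit} replacing their one-dimensional analogues. I will split the $L^q$-spectrum identity into matching upper and lower bounds, then read off the dimensions.

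For the upper bound $\tau_\mu(q) \leq \beta(q)$, I would fix $s > \beta(q)$ and use the shorter-side stopping
\[
\mathcal{S}_\delta = \{ \i \in \I^* : \alpha_2(\i) \leq \delta < \alpha_2(\i^-) \},
\]
so that every $S_\i([0,1]^2)$ is a thin rectangle with shorter side $\approx \delta$ and longer side $\alpha_1(\i)$, meeting $O(1 + \alpha_1(\i)/\delta)$ cubes of the $\delta$-mesh. Because the weak quasi-Bernoulli property and tempered distortion make $\pi_\i \mu|_{[\i]}/\mathbb{P}([\i])$ comparable to a rescaled copy of $\pi_\i\mu$ on an interval of length $\alpha_1(\i)$, Theorem \ref{cantorthm} identifies its $L^q$-spectrum with $t(\i,q)$. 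Aggregating over stopping cylinders and using $(a+b)^q \leq a^q+b^q$ for $q \leq 1$ or $(a+b)^q \leq 2^{q-1}(a^q+b^q)$ for $q \geq 1$ (each cube meets $O(1)$ members of $\mathcal{S}_\delta$), I expect to derive
\[
D^q_\delta(\mu) \lesssim \sum_{\i \in \mathcal{S}_\delta} \mathbb{P}([\i])^q \left(\frac{\alpha_1(\i)}{\delta}\right)^{t(\i,q)} \approx \delta^{-s} \sum_{\i \in \mathcal{S}_\delta} \Phi^{s,q}(\i) \leq \delta^{-s}\zeta_\Phi(s,q),
\]
using $\alpha_2(\i) \approx \delta$ to insert the factor $\alpha_2(\i)^{s-t(\i,q)}$. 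Proposition \ref{crit} then gives $\zeta_\Phi(s,q) < \infty$, so $\tau_\mu(q) \leq s$.

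For the lower bound, fix $s < \beta(q)$ and use Proposition \ref{crit} to pick $N$ with $s < s_{\I_N}$. Working inside the hyperbolic subsystem $\I_N$ with stopping $\mathcal{S}_\delta^N = \{ \i \in \I_N^* : \alpha_2(\i) \leq \delta < \alpha_2(\i^-) \}$, bounded distortion (Lemma \ref{bdp}) and the quasi-Bernoulli property \eqref{qbp} should supply the reverse estimate
\[
\delta^s D^q_\delta(\mu) \gtrsim \sum_{\i \in \mathcal{S}_\delta^N} \Phi^{s,q}(\i).
\]
Assuming for contradiction that this sum is $\leq 1$, I would iterate using the quasi sub-/super-multiplicativity of $\Phi^{s,q}$ on $\I_N^*$ supplied by Lemma \ref{mult} together with the constants from \eqref{qbp} and Lemma \ref{bdp}, to conclude $\sum_{\i \in \I_N^k} \Phi^{s,q}(\i) \lesssim_\delta 1$ for every $k$, contradicting $s < s_{\I_N}$. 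The hard part will be this iterative comparison: unlike in the Cantor case, Lemma \ref{mult} gives $\Phi^{s,q}$ only one-sidedly almost multiplicative depending on the sign of $s - t_f(q) - t_g(q)$, so the reverse estimate and its iteration have to be carried out with care on the uniformly hyperbolic subsystem, where bounded distortion is available and the $L^q$-spectra of the projections can be read off from Theorem \ref{cantorthm}.

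For the dimensions, $\bd F = \tau_\mu(0) = \beta(0)$ follows immediately, and $\pd F \leq \ubd F = \beta(0)$ is automatic. For the matching lower bound, let $F_N \subseteq F$ be the attractor of the hyperbolic subsystem on $\I_N$: uniform contraction, bounded distortion and the grid structure of A3 force the box and packing dimensions of $F_N$ to coincide (for instance, via a suitable Bernoulli measure of full packing dimension on $F_N$), and both equal the root $s_{\I_N}(0)$. Hence $\pd F \geq \pd F_N = s_{\I_N}(0)$, and letting $N \to \infty$ combined with Proposition \ref{crit} yields $\pd F \geq \sup_N s_{\I_N}(0) = \beta(0)$, completing the proof.
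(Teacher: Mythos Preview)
Your plan follows the paper's proof closely: shorter-side stopping for the upper bound, passage to the hyperbolic subsystem $\I_N$ for the lower bound, and the case split on the sign of $s-t_f(q)-t_g(q)$ are exactly what the paper does. Two small points where the paper differs from your sketch are worth flagging.

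First, in the lower bound (the paper's Lemma~\ref{lowerlemma}), the two cases are handled by genuinely different iterations. When $s\le t_f(q)+t_g(q)$ the singular value function is almost submultiplicative on $\I_N^*$ and the contradiction argument you describe goes through. But when $s>t_f(q)+t_g(q)$ it is only almost \emph{super}multiplicative, so assuming $\sum_{\mathcal{S}^N_\delta}\Phi^{s,q}\le 1$ and iterating down to $\I_N^k$ does not work; instead the paper argues directly by first fixing $k$ with $\sum_{\i\in\I_N^k}\Phi^{s,q}(\i)\ge 1$ (possible since $s<s_{\I_N}$) and then iterating \emph{forward} from $(\I_N)^k$ up to a stopping set approximating $\mathcal{S}^N_\delta$. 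You correctly flag this as the hard part, but be aware the mechanism in Case~2 is the reverse of Case~1, not a variant of it.

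Second, for the packing dimension the paper takes a one-line route: every open ball meeting $F$ contains a bi-Lipschitz image of $F$ (apply any map from $\I_\infty^*$ landing inside the ball), so $\pd F=\ubd F$ by a standard homogeneity criterion (\cite[Corollary~3.9]{falconer}). Your approximation $\pd F\ge\sup_N\pd F_N$ is also valid, but proving $\pd F_N=\bd F_N$ for each finite hyperbolic carpet ultimately rests on the same bi-Lipschitz observation, so the paper's route is more direct.
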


We remark that the box dimension formula follows immediately from the $L^q$-spectrum result and the fact that this also coincides with the packing dimension follows since every open ball intersecting the carpet contains a bi-Lipschitz copy of the carpet allowing us to apply, e.g., \cite[Corollary 3.9]{falconer}.

\subsection{Proof of upper bound for $\tau_\mu$ in Theorem \ref{carpetthm}}
Let $q \geq 0$ and  $\epsilon>0$.  Given $0<\delta<1$ define the $\delta$-stopping $\mathcal{S}_\delta:=\{\i \in \I^*: \ell(\i)\leq \delta<\ell(\i^-)\}$, where $\ell(\i)$ denotes the length of the shorter side of $S_\i([0,1]^2)$ and where $\i^-$ is obtained from $\i$ by removing the final symbol in $\I$.  Note that, by A2,  $\ell(\i)\approx  \delta$ for $\i \in \mathcal{S}_\delta$.

Let $\i \in \mathcal{S}_\delta$.  If $q \in [0,1]$, then   
$$ D^q_\delta(\mathbb{P}([\i]) S_\i(\mu)) \lesssim D^q_{\frac{\ell(\i)}{\o_1(\i)}}(\mathbb{P}([\i])\pi_\i \mu)  \lesssim \mathbb{P}([\i])^q\left(\frac{\o_1(\i)}{\ell(\i)}\right)^{t(\i, q)+\epsilon/2}\lesssim \mathbb{P}([\i])^q \left(\frac{\o_1(\i)}{\delta}\right)^{t(\i, q)+\epsilon/2}.$$
Here the implicit constants may depend on $q$ and $\epsilon$ but not on $\i$ or $\delta$.  On the other hand, if $q >1$, then  similarly
$$ D^q_\delta(\mathbb{P}([\i]) S_\i(\mu)) \lesssim D^q_{\frac{\ell(\i)}{\u_1(\i)}}(\mathbb{P}([\i])\pi_\i \mu)  \lesssim \mathbb{P}([\i])^q\left(\frac{\u_1(\i)}{\ell(\i)}\right)^{t(\i, q)+\epsilon/2}\lesssim \mathbb{P}([\i])^q \left(\frac{\u_1(\i)}{\delta}\right)^{t(\i, q)+\epsilon/2}.$$
For the rest of the proof we will write $\alpha_1$ to denote $\o_1$ if $q \in [0,1]$ and $\u_1$ if $q >1$.  Next we relate the moments $D^q_\delta(\mu)$ to sums of moments of component measures.  It is a simple but  crucial observation  that a given $\delta$-square $Q \in \mathcal{Q}_\delta$   intersects at most $\lesssim 1$ of the sets $S_\i([0,1]^2)$ for $\i \in \mathcal{S}_\delta$  This uses the definition of $\mathcal{S}_\delta$ and A3.  This is why we get approximate \emph{equality} below when we apply Jensen's inequality.  Then
\begin{eqnarray*}
D^q_\delta(\mu) &=& \sum_{Q \in \mathcal{Q}_\delta} \mu(Q)^q \\
&=& \sum_{Q \in \mathcal{Q}_\delta} \left(\sum_{\i \in \mathcal{S}_\delta} \mathbb{P}([\i]) S_\i(\mu)(Q) \right)^q \\
	&\approx& \sum_{Q \in \mathcal{Q}_\delta} \sum_{\i \in  \mathcal{S}_\delta} \mathbb{P}([\i])^q S_\i(\mu)(Q)^q \qquad \text{(by Jensen's inequality and separation A3)} \\
&=& \sum_{\i \in  \mathcal{S}_\delta}\sum_{Q \in \mathcal{Q}_\delta}  \mathbb{P}([\i])^q S_\i(\mu)(Q)^q \\
&=& \sum_{\i \in  \mathcal{S}_\delta}D^q_\delta(\mathbb{P}([\i])S_\i(\mu)).
\end{eqnarray*}
Combining we have
\begin{eqnarray*}
\delta^{\beta(q)+\epsilon}D^q_\delta(\mu) &\lesssim& \delta^{\beta(q)+\epsilon}\sum_{\i \in \mathcal{S}_\delta}\mathbb{P}([\i])^q  \left(\frac{\alpha_1(\i)}{\delta}\right)^{t(\i, q)+\epsilon/2} \\
&\lesssim& \sum_{\i \in \mathcal{S}_\delta} \mathbb{P}([\i])^q  \alpha_1(\i)^{t(\i,q)} \ell(\i)^{\beta (q)+\epsilon/2-t(\i,q)}\\
&\leq&  \sum_{\i \in \mathcal{S}_\delta}\mathbb{P}([\i])^q  \alpha_1(\i)^{t(\i,q)} \o_2(\i)^{\beta (q)+\epsilon/2-t(\i,q)} \\
&\leq& \sum_{n \in \N}\sum_{\i \in \I^n} \mathbb{P}([\i])^q \alpha_1(\i)^{t(\i,q)} \o_2(\i)^{\beta (q)+\epsilon/2-t(\i,q)}< \infty
\end{eqnarray*}
 by Proposition \ref{crit}. This proves  $\tau_\mu(q) \leq \beta(q)$.

\subsection{Proof of lower bound in  Theorem \ref{carpetthm}}
Let $q \geq 0$ and  $\epsilon>0$.   Choose $N$ sufficiently large such that $\beta(q)-\epsilon/2<s_N<\beta(q)$ where $s_N = s_{\mathcal{I}_N}$ is the root of the pressure associated with the hyperbolic subsystem $\mathcal{I}_N$. We can choose such an $N$ by Proposition \ref{crit} and Lemma \ref{iii}. 

Fix $0<\delta< 1$. Define $\mathcal{S}^N_\delta:=\{\i \in (\I_N)^*: \ell(\i)\leq \delta<\ell(\i^-)\}$, noting   that $\ell(\i)\approx_N \delta$ for $\i \in \mathcal{S}^N_\delta$. This time $\i^-$ is obtained from $\i$ by removing the final symbol in $\I_N$.

\begin{lma}\label{lowerlemma}
We have
\[
 \sum_{\i \in \mathcal{S}^N_\delta}\mathbb{P}([\i])^q  \o_1(\i)^{t(\i,q)} \o_2(\i)^{\beta(q)-\epsilon/2-t(\i,q)} \gtrsim 1.
\]
\end{lma}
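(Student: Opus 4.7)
The plan is to argue by contradiction, paralleling the strategy of the Cantor-case lower bound in the previous section. Set $s := \beta(q) - \epsilon/2$, and write $\Phi^{s,q}(\i) := \mathbb{P}([\i])^q \o_1(\i)^{t(\i,q)} \o_2(\i)^{s-t(\i,q)}$. Our choice of $N$ together with Proposition \ref{crit} gives $s < s_N$, so $P_{\mathcal{I}_N}(s,q) > 1$; in particular $\sum_{\i \in \mathcal{I}_N^m} \Phi^{s,q}(\i) \to \infty$ exponentially fast in $m$. Assume for contradiction that $\Sigma := \sum_{\i \in \mathcal{S}^N_\delta} \Phi^{s,q}(\i) < C_0$ for a suitably small constant $C_0 > 0$ to be chosen.

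The key intermediate step is a uniform almost-submultiplicativity on $\mathcal{I}_N^*$: there is a constant $C_\#$ depending on $s,q$ (but neither on the words nor on $\delta$) such that
\[
\Phi^{s,q}(\i\j) \leq C_\# \Phi^{s,q}(\i) \Phi^{s,q}(\j), \qquad \i,\j \in \mathcal{I}_N^*.
\]
When $s \leq t_f(q) + t_g(q)$, this follows immediately from Lemma \ref{mult}(i) together with the quasi-Bernoulli property on $\mathcal{I}_N^*$ and the bounded distortion of $\{f_\i\}, \{g_\i\}$ on $\mathcal{I}_N^*$ provided by Lemma \ref{bdp}; both promote the $n$-dependent constants $c_n$ and $e^{np_n}$ to uniform ones. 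When $s > t_f(q) + t_g(q)$, Lemma \ref{mult} only delivers supermultiplicativity, and I would instead split $\mathcal{S}^N_\delta$ (and, when needed, $\mathcal{I}_N^*$) by the three categories (a), (b), (c) used in the proof of Lemma \ref{iii}. Since categories (a) and (b) are closed under concatenation and category (c) enjoys $\alpha_1 \leq C^2 \alpha_2$, within a single category $\Phi^{s,q}$ is multiplicative in both directions up to uniform constants. Summing over the bounded number of category pairs loses at most a combinatorial factor and yields the required inequality.

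Granted submultiplicativity, the contradiction proceeds exactly as in the Cantor case. For $m$ large enough that every $\i \in \mathcal{I}_N^m$ has $\ell(\i)<\delta$, each $\i$ admits a unique decomposition $\i = \i_1 \i_2 \cdots \i_l \mathbf{r}$ with each $\i_j \in \mathcal{S}^N_\delta$ and with $\mathbf{r}$ a remainder of bounded $\mathcal{I}_N$-length $L = L(\delta, N)$. Iterating the submultiplicativity inequality and summing,
\[
\sum_{\i \in \mathcal{I}_N^m} \Phi^{s,q}(\i) \;\leq\; K_{\delta, N} \sum_{l=0}^{\infty} (C_\# \Sigma)^l \;\leq\; K_{\delta, N} \sum_{l=0}^\infty (C_\# C_0)^l,
\]
where $K_{\delta,N}$ is a finite constant bounding the remainder sum $\sum_{|\mathbf{r}|_N < L} \Phi^{s,q}(\mathbf{r})$. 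Choosing $C_0 < 1/(2 C_\#)$ makes the right-hand side a finite constant independent of $m$, contradicting the exponential divergence of the left-hand side. Hence $\Sigma \geq C_0$, which is the claimed inequality up to an absorbed multiplicative constant.

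The main obstacle is securing the submultiplicativity estimate in the regime $s > t_f(q) + t_g(q)$, since there the singular value function genuinely fails to be submultiplicative across mixed-category concatenations. The category-partition trick of Lemma \ref{iii} is the cleanest route, but some bookkeeping is needed to track which category-specific bounds apply to each pair $(\i,\j)$ and to keep the accumulated constants uniform in $\i,\j$. A fall-back would be to treat the two regimes $s \leq t_f(q)+t_g(q)$ and $s > t_f(q)+t_g(q)$ as separate sub-cases, and in the latter reduce to a fixed-category sub-IFS supplying a definite fraction of both $\Sigma$ and of the pressure sum on $\mathcal{I}_N^m$, from which the same contradiction can be derived.
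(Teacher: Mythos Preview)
Your Case 1 ($s \leq t_f(q)+t_g(q)$) is the paper's argument: on $\mathcal{I}_N^*$ the singular value function is almost submultiplicative with a \emph{uniform} constant (bounded distortion from Lemma~\ref{bdp} and the quasi-Bernoulli property \eqref{qbp} replace the $n$-dependent constants of Lemma~\ref{mult}), and iterating over the stopping decomposition yields the contradiction.

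Your Case 2 ($s > t_f(q)+t_g(q)$) has a genuine gap. The inequality $\Phi^{s,q}(\i\j) \leq C_\# \Phi^{s,q}(\i)\Phi^{s,q}(\j)$ simply fails on $\mathcal{I}_N^*$ in this regime: the cross-term $(\alpha_1/\alpha_2)^{s-t_f(q)-t_g(q)}$ arising in the proof of Lemma~\ref{mult} is unbounded. Your category workaround does not rescue the contradiction scheme as written, because the stopping decomposition $\i=\i_1\cdots\i_l\mathbf{r}$ of a generic $\i\in\mathcal{I}_N^m$ produces factors $\i_j\in\mathcal{S}^N_\delta$ lying in \emph{different} categories, and the within-category multiplicativity you invoke only controls products of words from a \emph{single} category. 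Your fall-back (pass to a fixed-category sub-IFS $\Gamma$) could in principle be salvaged, but it would require running the contradiction on a $\Gamma$-stopping set rather than on $\mathcal{S}^N_\delta$, and then a separate comparison back to $\mathcal{S}^N_\delta$; as stated it is too sketchy to count as a proof.

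The paper sidesteps the whole issue by reversing the logic in Case 2: it uses the available \emph{super}multiplicativity directly rather than by contradiction. Since $s<s_N$, one fixes $k$ with $\sum_{\i\in\mathcal{I}_N^k}\Phi^{s,q}(\i)\geq 1$; supermultiplicativity gives $\sum_{\j\in\mathcal{I}_N^k}\Phi^{s,q}(\i\j)\geq\Phi^{s,q}(\i)$, which iterates to show that a coarse $k$-step $\delta$-stopping set $(\mathcal{I}_N)^k_\delta$ already has $\Phi$-sum at least $1$. Every $\i\in\mathcal{S}^N_\delta$ is then a bounded extension of a word in $(\mathcal{I}_N)^k_\delta$, and one more application of supermultiplicativity gives $\sum_{\mathcal{S}^N_\delta}\Phi^{s,q}\gtrsim_k 1$. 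This is considerably cleaner than trying to manufacture submultiplicativity where none exists.
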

\begin{proof}
Write $s=\beta(q)-\epsilon/2$.  We split the proof into two cases depending on   $s$. Throughout we use $\Phi^{s,q}(\i) = \mathbb{P}([\i])^q  \o_1(\i)^{t(\i,q)} \o_2(\i)^{s-t(\i,q)}$.
\\

Case 1: $0 \leq s \leq t_f(q)+t_g(q)$. \\

In this case we have  almost submultiplicativity, that is, there exists a constant $c'=c'(q)$ such that for all $\i, \j \in \mathcal{I}_N^*$
\begin{equation} \label{submm}
\Phi^{s,q}(\i\j) \leq  c' \Phi^{s,q}(\i) \Phi^{s,q}(\j) .
\end{equation}
However, we can upgrade this to genuine submultiplicativity by replacing $\Phi^{s,q}$ with $c' \Phi^{s,q}$, which we do for the remainder of case 1.  Suppose 
\[
 \sum_{\i \in \mathcal{S}^N_\delta}\Phi^{s,q}(\i) \leq 1.
\]
Therefore, using \eqref{submm}, for all $\i \in \mathcal{I}_N^*$
\begin{equation} \label{iterate}
 \sum_{\j \in \mathcal{S}^N_\delta}\Phi^{s,q}(\i\j) \leq   \sum_{\j \in \mathcal{S}^N_\delta}\Phi^{s,q}(\i) \Phi^{s,q}(\j) \leq   \Phi^{s,q}(\i) . 
\end{equation}
Let $k \geq 1$ be a very large integer. We approximate words in $\I_N^k$ by compositions of words in $\mathcal{S}^N_\delta$ by defining
\[
\mathcal{S}^{N,k}_\delta = \{ \i_1 \cdots \i_m : \forall l=1, \dots, m,  \,  \i_l \in \mathcal{S}^N_\delta, \  \exists \i_{m+1} \in \mathcal{S}^N_\delta \ \text{s.t.} \ |\i_1 \cdots \i_m| \leq k < |\i_1 \cdots \i_m\i_{m+1}| \}
\]
where here $|\cdot|$ denotes length as a word in the alphabet $\I_N$ (rather than length as a word in the alphabet $\I$). Then, by iteratively applying \eqref{iterate},
\[
 \sum_{\i \in \mathcal{S}^{N,k}_\delta}\Phi^{s,q}(\i) \leq 1.
\]
For all $\i \in (\mathcal{I}_N)^k$ we can write $\i = \i_1 \i_2$ for some $\i_1 \in \mathcal{S}^{N,k}_\delta$ and $\i_2 \in \mathcal{I}_N^*$ with $|\i_2| \lesssim_\delta 1$.  Therefore
\[
 \sum_{\i \in  (\mathcal{I}_N)^k}\Phi^{s,q}(\i) \lesssim_\delta   \sum_{\i \in  \mathcal{S}^{N,k}_\delta}\Phi^{s,q}(\i)  \leq 1
\]
which proves (letting $k \to \infty$ while keeping $\delta$ fixed) that $s \geq s_N$, a contradiction. \\

Case 2: $s> t_f(q)+t_g(q)$. \\

In this case we have (almost) supermultiplicativity, that is, for all $\i, \j \in \mathcal{I}_N^*$
\begin{equation} \label{supmm}
\Phi^{s,q}(\i\j) \geq  C' \Phi^{s,q}(\i) \Phi^{s,q}(\j)
\end{equation}
for some constant $C'=C'(q)$.  However, we can upgrade this to genuine supermultiplicativity (that is, with $C'=1$) simply by replacing $\Phi^{s,q}$ with $ C'\Phi^{s,q}$.  Therefore we may assume $C'=1$ in \eqref{supmm}.  

Since $s<s_N$ we may fix $k \in \mathbb{N}$ such that 
\[
\sum_{\i \in  (\mathcal{I}_N)^k} \Phi^{s,q}(\i)  \geq 1.
\]
Then, for all $\i \in \mathcal{I}^*$,
\begin{equation} \label{iterate2}
 \sum_{\j \in (\mathcal{I}_N)^k}\Phi^{s,q}(\i\j) \geq   \sum_{\j \in (\mathcal{I}_N)^k}\Phi^{s,q}(\i) \Phi^{s,q}(\j) \geq   \Phi^{s,q}(\i) . 
\end{equation}
Let $\delta>0$ be very small.  This time we approximate words in $\mathcal{S}_\delta^N$  by compositions of words of length $k$ by defining
\begin{align*}
(\mathcal{I}_N)^k_\delta = \{ \i_1 \cdots \i_m &: \forall l=1, \dots, m,  \,  \i_l \in (\mathcal{I}_N)^k, \\
& \,   \exists \i_{m+1} \in (\mathcal{I}_N)^k \ \text{s.t.} \ \o_2(\i_1 \cdots \i_m) \geq \delta > \o_2(\i_1 \cdots \i_m\i_{m+1}) \}.
\end{align*}
Then, by iteratively applying \eqref{iterate2},
\[
 \sum_{\i \in (\mathcal{I}_N)^k_\delta}\Phi^{s,q}(\i) \geq 1.
\]
For all $\i \in \mathcal{S}^N_\delta$ we can write $\i = \i_1 \i_2$ for some $\i_1 \in (\mathcal{I}_N)^k_\delta$ and $\i_2 \in \mathcal{I}_N^*$ with $|\i_2| \lesssim_k 1$.  Therefore
\[
 \sum_{\i \in \mathcal{S}^N_\delta}\Phi^{s,q}(\i) \gtrsim_k   \sum_{\i \in (\mathcal{I}_N)^k_\delta}\Phi^{s,q}(\i) \geq 1,
\]
completing the proof.
\end{proof}

We can now prove  the lower bound in Theorem \ref{carpetthm}.   Let $\i \in \mathcal{S}_\delta$.  If $q \in [0,1]$, then 
$$ D^q_\delta(\mathbb{P}([\i])S_\i(\mu)) \gtrsim D^q_{\frac{\ell(\i)}{\u_1(\i)}}(\mathbb{P}([\i])\pi_\i \mu)  \gtrsim \mathbb{P}([\i])^q \left(\frac{\u_1(\i)}{\ell(\i)}\right)^{t(\i, q)-\epsilon/2}\gtrsim \mathbb{P}([\i])^q \left(\frac{\u_1(\i)}{\delta}\right)^{t(\i, q)-\epsilon/2}.$$
Again the implicit constants may depend on $q$ and $\epsilon$ but not on $\i$ or $\delta$. On the other hand, if $q >1$, then  similarly
$$ D^q_\delta(\mathbb{P}([\i])S_\i(\mu)) \gtrsim D^q_{\frac{\ell(\i)}{\o_1(\i)}}(\mathbb{P}([\i])\pi_\i \mu)  \gtrsim \mathbb{P}([\i])^q \left(\frac{\o_1(\i)}{\ell(\i)}\right)^{t(\i, q)-\epsilon/2}\gtrsim \mathbb{P}([\i])^q \left(\frac{\o_1(\i)}{\delta}\right)^{t(\i, q)-\epsilon/2}.$$
For the rest of the proof of the lower bound, we   write $\alpha_1$ to denote $\u_1$ if $q \in [0,1]$ and $\o_1$ if $q >1$.  Note that this is the reverse of how the notation $\alpha_1$ was used when proving the upper bound.   Using an analogous argument  from the proof of the upper bound (replacing $\mathcal{S}_\delta$ with $\mathcal{S}_\delta^N$) we get  
\begin{eqnarray*}
D^q_\delta(\mu)  \approx_N   \sum_{\i \in  \mathcal{S}^N_\delta}D^q_\delta(\mathbb{P}([\i])S_\i(\mu)).
\end{eqnarray*}
Combining, we have
\begin{eqnarray*}
\delta^{\beta(q)-\epsilon}D^q_\delta(\mu) &\gtrsim& \delta^{\beta(q)-\epsilon}\sum_{\i \in \mathcal{S}^N_\delta} \mathbb{P}([\i])^q\left(\frac{\alpha_1(\i)}{\delta}\right)^{t(\i, q)-\epsilon/2} \\
&\gtrsim& \sum_{\i \in \mathcal{S}^N_\delta} \mathbb{P}([\i])^q \alpha_1(\i)^{t(\i,q)} \ell(\i)^{\beta (q)-\epsilon/2-t(\i,q)}\\
&\gtrsim& \sum_{\i \in \mathcal{S}^N_\delta}\mathbb{P}([\i])^q \alpha_1(\i)^{t(\i,q)} \o_2(\i)^{\beta(q)-\epsilon/2-t(\i,q)} \gtrsim 1.
\end{eqnarray*}
 by Lemma  \ref{lowerlemma}. This proves  $\tau_\mu(q) \geq \beta(q)$.

\section{Further directions and questions} \label{further}

\subsection{Weakening our assumptions}

Assumptions A1 and A2 are  mild and completely natural in this setting.  The main purpose of A3 is to ensure that the projections are parabolic Cantor sets satisfying A3' (the OSC).  Without this, we are unable to compute the $L^q$-spectrum of the projections, or show that they exist.  Peres and Solomyak \cite{peressolomyak} proved that the $L^q$-spectrum exists for Bernoulli measures on hyperbolic systems without assuming the OSC.  It would be interesting to generalise their result to almost quasi-Bernoulli measures on parabolic Cantor sets with overlaps, but we have not pursued this.   Further to A1, A2 and A3, our main technical assumption in this paper is that the  induced subsystem has good distortion properties, stated succinctly in \eqref{sum} and the need for the quasi-Bernoulli property on the induced system \eqref{qbp}.  It would be interesting to try to remove these assumptions.  The main use of \eqref{sum} is to establish that the minimal root of the pressure coincides with the critical exponent of the zeta function, Proposition \ref{crit}.  Without assuming \eqref{sum} our methods provide non-trivial upper and lower bounds for the $L^q$-spectrum involving the  critical exponent and the minimal root of the pressure, respectively, although we have not stated these formally.

\subsection{Hausdorff dimension and measure}

It would be interesting to study the Hausdorff dimension and measure for parabolic carpets.  We would expect a variational principle, that is, the Hausdorff dimension of the parabolic carpet is the supremum of the Hausdorff dimension of invariant measures supported by it. Further, we would expect the Hausdorff dimension of an invariant measure to be given by a Ledrappier-Young formula.  The question of Hausdorff measure is then of particular interest to us and may be subtle.  Recall, Peres \cite{pereshausdorff} proved that the Hausdorff measure in the Hausdorff dimension of a Bedford-McMullen carpet with non-uniform fibres is infinite.  This might suggest that the same is true for parabolic carpets.  However, for a large family of parabolic Cantor sets, Urba\'nski \cite{urbanski} proved  that the Hausdorff measure in the Hausdorff dimension is zero.  Since the projections of parabolic carpets onto the coordinate axes are parabolic Cantor sets, this might suggest that the Hausdorff measure of the carpet is also zero (or at least not infinite).  To be concrete, let $F$ be a parabolic carpet with parabolicity only in the horizontal direction  and is (strictly) dominated in the sense that
\[
\max_{ i \in \mathcal{I}} \sup_{x \in [0,1]} |g_i'(x)| < \min_{ i \in \mathcal{I}} \inf_{x \in [0,1]} |f_i'(x)|.
\]
This means only the projection onto the horizontal axes is relevant and this model most accurately aligns with the Bedford-McMullen model.  Finally, assume that the projection of $F$ onto the horizontal axes is a parabolic Cantor set with Hausdorff measure zero in the Hausdorff dimension.  Then we ask, is the Hausdorff measure in the Hausdorff dimension of $F$ zero, positive and finite, or infinite?

\subsection{Existence and uniqueness of (weak) Gibbs-type measures}

It would be interesting to study the existence and uniqueness of weak Gibbs-type measures for potentials $\Phi: \Sigma^* \to \R$ such as the ``singular value potentials'' studied in this paper. The analogue of such measures in the conformal parabolic IFS literature are conformal measures, which are known to exist provided that the derivatives of compositions of parabolic maps decay sufficiently fast \cite{parifs}, and therefore we would expect the existence of weak Gibbs-type measures to be related to the rates of decay of $\Phi(i^n)$ for parabolic indices $i$. However, even in the simplest non-conformal hyperbolic settings (self-affine carpets) it is known that Gibbs-type measures may not necessarily exist or be unique \cite{kv} (which contrasts with the existence and uniqueness of conformal measures on attractors of finite conformal hyperbolic IFS). This would suggest that a more subtle criteria might determine the existence and uniqueness of weak Gibbs-type measures than just the decay rates of $\Phi(i^n)$ for parabolic indices $i \in \I$, and it would be interesting to investigate this further.

\section*{Acknowledgements}

The authors thank Mike Todd for some helpful discussions, especially surrounding Lemma \ref{bdp}.

\begin{bibdiv}
\begin{biblist}

\bibitem{bedford}
T. Bedford.
 \emph{Crinkly curves, Markov partitions and box dimensions in self-similar sets},
 PhD thesis, University of Warwick, (1984).

\bib{bt}{article}{
   author={Bruin, Henk},
   author={Todd, Mike},
   title={Equilibrium states for interval maps: potentials with
   $\sup\phi-\inf\phi<h_{\rm top}(f)$},
   journal={Comm. Math. Phys.},
   volume={283},
   date={2008},
   number={3},
   pages={579--611},
}
	
\bibitem{falconer}
K. J. Falconer.
{\em Fractal Geometry: Mathematical Foundations and Applications},
 John Wiley \& Sons, Hoboken, NJ, 2nd ed., 2003.

\bibitem{boxdim}
K. J. Falconer, J. M. Fraser and A. K\"{a}enm\"{a}ki.
 \emph{Minkowski dimension for measures},
 preprint, https://arxiv.org/pdf/2001.07055

\bib{gelfertrams}{article}{
   author={Gelfert, Katrin},
   author={Rams, Micha\l },
   title={Geometry of limit sets for expansive Markov systems},
   journal={Trans. Amer. Math. Soc.},
   volume={361},
   date={2009},
   number={4},
   pages={2001--2020},
}

\bib{jordanrams}{article}{
title={Multifractal analysis of weak Gibbs measures for non-uniformly expanding C1 maps},
 author={Jordan, Thomas },
 author={Rams, Micha\l },
volume={31},
number={1},
journal={Ergodic Theory and Dynamical Systems},
publisher={Cambridge University Press},
year={2011},
pages={143–164}}

\bib{kr}{article}{
   author={K\"{a}enm\"{a}ki, Antti},
   author={Reeve, Henry W. J.},
   title={Multifractal analysis of Birkhoff averages for typical infinitely
   generated self-affine sets},
   journal={J. Fractal Geom.},
   volume={1},
   date={2014},
   number={1},
   pages={83--152},
}

\bib{kv}{article}{
   author={K\"{a}enm\"{a}ki, Antti},
   author={Vilppolainen, Markku},
   title={Dimension and measures on sub-self-affine sets},
   journal={Monatsh. Math.},
   volume={161},
   date={2010},
   number={3},
   pages={271--293},
}

\bib{parifs}{article}{
   author={Mauldin, R. D.},
   author={Urba\'{n}ski, M.},
   title={Parabolic iterated function systems},
   journal={Ergodic Theory Dynam. Systems},
   volume={20},
   date={2000},
   number={5},
   pages={1423--1447},
}

\bib{fractalmeasures}{article}{
   author={Mauldin, R. D.},
   author={Urba\'{n}ski, M.},
     TITLE = {Fractal measures for parabolic {IFS}},
   JOURNAL = {Adv. Math.},
  FJOURNAL = {Advances in Mathematics},
    VOLUME = {168},
      YEAR = {2002},
    NUMBER = {2},
     PAGES = {225--253},
      ISSN = {0001-8708},
   MRCLASS = {28A80 (37C45)},
  MRNUMBER = {1912133},
MRREVIEWER = {Esa J\"{a}rvenp\"{a}\"{a}},
       URL = {https://doi-org.ezproxy.st-andrews.ac.uk/10.1006/aima.2001.2049},
}

\bibitem{mcmullen}
C. McMullen.
 The Hausdorff dimension of general Sierpi\'nski carpets,
 {\em Nagoya Math. J.}, {\bf 96}, (1984), 1--9.

\bibitem{perespacking}
Y. Peres.
The packing measure of self-affine carpets,
\emph{Math. Proc. Cambridge Philos. Soc.}, {\bf  115}, (1994), 437--450.

\bibitem{pereshausdorff}
Y. Peres.
The self-affine carpets of McMullen and Bedford have infinite Hausdorff measure,
\emph{Math. Proc. Cambridge Philos. Soc.}, {\bf 116},  (1994),  513--526.

\bib{urbanski}{article}{
    AUTHOR = {Urba\'{n}ski, Mariusz},
     TITLE = {Parabolic {C}antor sets},
   JOURNAL = {Fund. Math.},
  FJOURNAL = {Fundamenta Mathematicae},
    VOLUME = {151},
      YEAR = {1996},
    NUMBER = {3},
     PAGES = {241--277},
      ISSN = {0016-2736},
   MRCLASS = {58F03 (28A78 28A80 58F12)},
  MRNUMBER = {1424576},
}

\bibitem{peressolomyak} Y. Peres and B. Solomyak. Existence of $L^q$-dimensions and entropy dimension for self-conformal measures, \emph{Indiana Univ. Math.  J.}, \textbf{49}, (2000), 1603--1621.

\end{biblist}
\end{bibdiv}

\end{document}